\RequirePackage{fix-cm}
\documentclass[smallextended]{svjour3}       
\smartqed  
\usepackage[all]{xy}
\usepackage{subfigure}
\usepackage{graphicx}
\usepackage{amsmath,amssymb,algorithm,rotating,amsfonts,algpseudocode}
\usepackage[colorlinks=true,linkcolor=red,citecolor=blue]{hyperref}
\usepackage{color}
\usepackage{cite}

\def\r{\textcolor{black}}

 \journalname{}

\newcommand{\co}{\mathop{\mathrm{conv}}}
\def\cl{\mbox{\rm cl}\,}
\def\Limsup{\mathop{{\rm Lim}\,{\rm sup}}}

\def\co{\mbox{\rm co}\,}

\def\Int{\mbox{\rm int}\,}
\def\dom{\mbox{\rm dom}\,}

\begin{document}

\title{ Optimization Problems with Difference of Tangentially Convex Functions under Uncertainty }
\titlerunning{Optimization Problems with DTC Functions under Uncertainty }        

\author{Feryal Mashkoorzadeh$^{1,2,3}$   \and Nooshin Movahedian$^{2}$ 
}

\institute{ \at
               $^{1}$  Iran National Science Foundation (INSF), P.O. Box: 15875-3939, Tehran, Iran \\
               $^{2}$  Department of Applied Mathematics and Computer Sciences, Faculty of Mathematics and Statistics, University of Isfahan, P. O. Box: 81745-163, Isfahan, Iran \\
              $^{3}$  School of Mathematics, Institute for Research in Fundamental Sciences (IPM),
P.O. Box: 19395-5746, Tehran, Iran\\
              \email{f.mashkoorzadeh@sci.ui.ac.ir; n.movahedian@sci.ui.ac.ir }
}

\date{Received: date / Accepted: date}

\maketitle

\begin{abstract}
This paper investigates a specific class of nonsmooth nonconvex optimization problems in the face of data uncertainty, namely, robust optimization problems, where the given objective function can be expressed as a difference of two tangentially convex (DTC) functions. 
More precisely, we develop a range of nonsmooth calculus rules to establish relationships between Fr\'{e}chet and limiting subdifferentials for a particular maximum function and the tangential subdifferential of its constituent functions. Subsequently, we derive optimality conditions for problems involving DTC functions, employing generalized constraint qualifications within the framework of the tangential subdifferential concept.
Several illustrative examples are presented to demonstrate the obtained results.

\keywords{Nonsmooth optimization \and Robust nonconvex optimization  \and Optimality conditions  \and Tangential subdifferential  }
 
 \subclass{90C26 \and 90C30 \and 49J52}
\end{abstract}

\section{Introduction}
Real-life optimization problems often encounter data uncertainty due to incomplete information, prediction errors, and measurement errors.  
Numerous fields, such as logistics, finance, water management, energy management, machine learning, and major emergencies like COVID-19, frequently involve problems with uncertain data. 
As a result, significant attention has been devoted to these types of optimization problems from both theoretical and practical perspectives over the past two decades (see, e.g., \cite{Garcia}). 

The primary objective of this study is to investigate optimality conditions for a specific class of optimization problems that involve data uncertainty in the constituent functions:
\begin{eqnarray*}
  \begin{array}{lr}
 \min\hspace{.9cm}
 \psi_0(x)\hspace{1cm}(RP_S) \\
 \hspace{.1cm}\mbox{s.t.}\hspace{1cm} x\in S,
  \end{array}
\end{eqnarray*}
where $\psi_0:\mathbb{R}^n\rightarrow\mathbb{R}\cup\{+\infty\}$ and $S$ is a nonempty closed subset of $\mathbb{R}^n.$ 

Robust optimization is a computationally powerful deterministic approach applicable to many classes of optimization problems involving data uncertainty. The goal of robust optimization is to find a worst-case solution that satisfies all possible realizations of the constraints, thereby immunizing the optimization problem against uncertain parameters. There are several methods for applying robust optimization, and the choice of method typically depends on the problem being solved.

Over the last three decades, numerous studies have examined nonsmooth and nonconvex programming problems, particularly DC optimization problems, where the objective and/or constraint functions are expressed as the difference of two convex functions. Indeed, DC programming and the DC Algorithm (DCA) are widely recognized as essential tools for addressing nonsmooth and nonconvex programming problems. 
Recent work \cite{p3} introduced a broader class of nonconvex, nonsmooth problems where functions are expressed as differences of two tangentially convex (DTC) functions. Motivated by these advances, we derive optimality conditions for both unconstrained and constrained optimization problems under data uncertainty. Our approach combines robust optimization techniques with the tangential subdifferential concept. Specifically, we:
\begin{itemize}
\item[1.] Analyze a special nonsmooth maximum function, proving its DTC property using variational analysis.
\item[2.] Explore relationships between the Fr\'{e}chet subdifferential, limiting subdifferential, and tangential subdifferential for constituent functions.
\item[3.] Employ generalized error bounds and Abadie constraint qualifications within the tangential subdifferential framework to establish optimality conditions for the robust DTC problem $RP_S.$
\end{itemize} Unlike prior studies, we relax the convexity assumption on uncertain sets and feasible sets, as well as the concavity requirement for functions with respect to uncertain parameters. Our results generalize existing work by encompassing cases where these stronger assumptions hold, thereby deriving sharper local optimality conditions under weaker constraint qualifications.

The remainder of the paper is organized as follows. Section \ref{pre} contains the basic definitions and preliminary results of convex and nonsmooth analysis needed in the article.
Several results in nonsmooth analysis that characterize some  suitable properties for a specific maximum function are established in Section \ref{max}. In Section \ref{CQ}, two robust constraint qualifications are introduced based on the concept of tangential subdifferential. Furthermore, the relationship between them is exploited. Finally in Section \ref{OPT}, new optimality conditions for problem $RP_S$ are proved for two special types of objective functions. 
Several examples are given to illustrate the results of the paper.

\section{Preliminaries}\label{pre}
In this section, we recall some basic definitions and results from nonsmooth analysis needed throughout the paper. For more details we refer the reader to \cite{Bazaraa,Borwein,fclarke,Clarke}.

Throughout the paper, we use the following standard notations.
Suppose that $\mathbb{R}^n$ signifies  Euclidean space whose norm is denoted by $||.||.$ The inner product of two vectors $x,y\in\mathbb{R}^n$ is shown by $\langle x,y\rangle=\sum_{i=1}^nx_iy_i.$ 
By $\mathbb{B},$ we denote the closed unit ball centered at the origin of $\mathbb{R}^n,$ while $\mathbb{B}_{\delta}(x)$ stands for the closed ball centered at $x$ with radius $\delta>0.$
Let $S$ be a given subset of $\mathbb{R}^n,$ the distance function $d_S:\mathbb{R}^n\rightarrow\mathbb{R}_+$ is defined by $d_S(x):=\underset{y\in S}{\inf}~||y-x||.$ The negative polar cone, the closure and convex hull of $S,$ are denoted, respectively, by $S^-,~\cl S$ and $\co S.$

If $S$ is closed 
and $\bar{x}\in S,$ then
the contingent cone of $S$  and the Fr\'{e}chet normal cone to $S$ at $\bar{x}$ are defined, respectively, by
\[\begin{array}{ll}
T(\bar{x};S):=\Big\{d\in\mathbb{R}^n|\ \exists t_k\downarrow0,\ \exists
d_k\rightarrow d,\ \mbox{s.t.}\  \bar{x}+t_kd_k\in S,
 \ \forall k\Big\}.
 \end{array}\]
and
\[\hat{ N}(\bar{x};S):=(T(\bar{x};S))^-=\Big\{\xi\in\mathbb{R}^n|\ \langle\xi,d\rangle\leq0,\ \forall d\in T(\bar{x};S)\Big\}.
\]
Consider $f:\mathbb{R}^n\rightarrow\mathbb{R}\cup\{+\infty\}$ and  $\bar{x}\in\dom f:=\{x\in\mathbb{R}^n\ |\ f(x)<\infty \}.$
 $f$ is lower semicontinuous (l.s.c.)  at $\bar{x}$ provided that
 $
 \liminf_{x\rightarrow\bar{x}}f(x)\geq f(\bar{x}).
 $
Complementary to lower semicontinuity  of $f,$ we say that $f$ is upper semi continuous (u.s.c.)  at $\bar{x}$ if  $-f$ is l.s.c. at this point.
 
The lower and upper Dini derivatives of $f$ at $\bar{x}$ in the direction $d\in\mathbb{R}^n$ are defined, respectively, by
\begin{eqnarray*}
\begin{array}{ll}
f^-(\bar{x};d):=\underset{t\downarrow0}{\liminf}\dfrac{f(\bar{x}+td)-f(\bar{x})}{t},\\
f^+(\bar{x};d):=\underset{t\downarrow0}{\limsup}\dfrac{f(\bar{x}+td)-f(\bar{x})}{t}.
\end{array}
\end{eqnarray*}

The directional derivative of $f$ at $\bar{x}$ in the direction $d$ is given by
\begin{eqnarray}\label{11111}
f'(\bar{x};d):=\lim_{t\downarrow0}\frac{f(\bar{x}+td)-f(\bar{x})}{t},
\end{eqnarray}
when the limit in (\ref{11111}) exists.
Following \cite{boris3}, if $f$ is l.s.c. around $\bar{x}\in\dom f$,
then the presubdifferential or Fr\'{e}chet subdifferential of f at $\bar{x}$ is defined by
\begin{equation*}
\hat{\partial}f(\bar{x}):=\{\xi\in\mathbb{R}^n~|~\underset{x\rightarrow \bar{x}}{\liminf}~\frac{f(x)-f(\bar{x})-\langle\xi,x-\bar{x}\rangle}{||x-\bar{x}||}\geq0
\},
\end{equation*}
and the limiting or Mordukhovich subdifferential of $f$ at $\bar{x}$ is given by 
\begin{equation}\label{600}
\partial_L f(\bar{x}):=\underset{x\overset{f}{\rightarrow}\bar{x}}{\Limsup}~ \hat{\partial} f(x),
\end{equation}
where  the symbol $x\overset{f}{\rightarrow}\bar{x}$ signifies that $x\rightarrow\bar{x}$ with $f(x)\rightarrow f(\bar{x}).$ Moreover, for any given $\bar{x}\in S,$ the Mordukhovich normal cone is defined by
\begin{equation*}
N_L(\bar{x};S):=\underset{x\rightarrow\bar{x}}{\Limsup}~\hat{N}(\bar{x};S).
\end{equation*}

The following definition presents a concept of a class of functions that was introduced by Pshenichnyi \cite{Pschen} and is  called ``tangentially convex" by Lemar\'{e}chal \cite{lemar}.
\begin{definition}
A function $f:\mathbb{R}^n\rightarrow\mathbb{R}\cup\{+\infty\}$ is said to be tangentially convex at $\bar{x}  \in\dom f$
if for each $d\in\mathbb{R}^n$ the limit in (\ref{11111}) exists, is finite and the function $d\longmapsto f'(\bar{x};d)$ is convex.
\end{definition}
Since $d\longmapsto f'(\bar{x};d)$ is generally positively homogeneous, it follows that for each tangentially convex function, it is sublinear \cite{Borwein,Legaz}.

Tangentially convex functions include several known classes of nonconvex and nondifferentiable
functions, such as regular functions in the sense of Clarke and Michel-Penot, G\^{a}teaux differentiable functions, and also convex functions with open domain. This class is closed under addition and multiplication by positive scalars. Moreover, the product of two nonnegative tangentially convex functions is tangentially convex. 

The following definition, presents the notion of tangential subdifferential
that was defined in \cite{Pschen}.
\begin{definition}
Let $f:\mathbb{R}^n\rightarrow\mathbb{R}\cup\{+\infty\}$ be a tangentially convex function at $\bar{x}\in\dom f.$
The tangential subdifferential of  $f$  at $\bar{x}$ given by  the set
\[\partial_Tf(\bar{x}):=\Big\{\xi\in\mathbb{R}^n\ |\  \langle \xi,d\rangle\leq f'(\bar{x};d),\  \forall d\in\mathbb{R}^n\Big\}.\]
\end{definition}

It is worth to mention that 
$\partial_Tf(\bar{x})$ is a nonempty closed convex subset of $\mathbb{R}^n.$ It is also easily seen that $f'(\bar{x};.)$ is the support function of $\partial_Tf(\bar{x}),$ that is, for each $ d\in\mathbb{R}^n$ one has
$$f'(\bar{x};d)=\hspace{-.2cm}\max_{\xi\in{\partial_Tf(\bar{x})}}\langle \xi,d\rangle.$$
Obviously, if $f$ is a convex function then
$\partial_Tf(\bar{x})$ reduces to the classical convex subdifferential $\partial f(\bar{x})$ \cite{Borwein}.
The following lemma, provides us with the relationship between F\'{e}chet subdifferential and tangential subdifferential.  
\begin{lemma}\label{lem2}
If $f:\mathbb{R}^n\rightarrow\mathbb{R}\cup\{+\infty\}$ is l.s.c. and tangentially convex at $\bar{x}\in\mathbb{R}^n,$ then $\hat{\partial}f(\bar{x})\subseteq\partial_Tf(\bar{x}).$ The converse holds provided that $f$ is assumed to be Lipschitz near $\bar{x}.$ 
\end{lemma}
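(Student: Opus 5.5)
The plan is to verify both inclusions directly from the definitions, testing the Fréchet liminf along rays for the first inclusion and arguing by contradiction for the converse. The point of the Lipschitz hypothesis is to pass from curved sequences $x_k\to\bar x$ to straight rays $\bar x+td$.

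\textbf{Inclusion $\hat{\partial}f(\bar{x})\subseteq\partial_Tf(\bar{x})$.} Take $\xi\in\hat{\partial}f(\bar{x})$ and fix $d\in\mathbb{R}^n$. If $d=0$, the inequality $\langle\xi,d\rangle\leq f'(\bar x;d)$ is trivial. If $d\neq0$, I restrict the $\liminf$ in the definition of the Fréchet subdifferential to the particular points $x=\bar x+td$ with $t\downarrow0$. Since the full $\liminf$ is a lower bound for the liminf along this path, this gives
\[
\liminf_{t\downarrow0}\frac{f(\bar x+td)-f(\bar x)-t\langle\xi,d\rangle}{t\|d\|}\geq0 .
\]
By tangential convexity the directional derivative $f'(\bar x;d)$ exists and is finite, so the left-hand side equals $\big(f'(\bar x;d)-\langle\xi,d\rangle\big)/\|d\|$. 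Hence $\langle\xi,d\rangle\le f'(\bar x;d)$ for every $d$, which is exactly the condition $\xi\in\partial_Tf(\bar x)$. This part uses neither lower semicontinuity nor any Lipschitz property.

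\textbf{Converse under the Lipschitz assumption.} Suppose $f$ is Lipschitz with constant $L$ near $\bar x$, and take $\xi\in\partial_Tf(\bar x)$. Assume for contradiction that $\xi\notin\hat{\partial}f(\bar x)$. Then there are $\varepsilon>0$ and a sequence $x_k\to\bar x$, $x_k\ne\bar x$, with
\[
f(x_k)-f(\bar x)-\langle\xi,x_k-\bar x\rangle<-\varepsilon\|x_k-\bar x\| .
\]
Set $t_k:=\|x_k-\bar x\|\downarrow0$ and $d_k:=(x_k-\bar x)/t_k$. By compactness of the unit sphere, after passing to a subsequence we may assume $d_k\to d$ with $\|d\|=1$.

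The Lipschitz estimate gives
\[
f(\bar x+t_kd)\le f(\bar x+t_kd_k)+Lt_k\|d_k-d\| .
\]
Dividing by $t_k$ and using $\bar x+t_kd_k=x_k$ together with the strict inequality above, we obtain
\[
\frac{f(\bar x+t_kd)-f(\bar x)}{t_k}<\langle\xi,d_k\rangle-\varepsilon+L\|d_k-d\| .
\]
The limit of the left-hand side exists and equals $f'(\bar x;d)$ by tangential convexity. Letting $k\to\infty$ therefore yields $f'(\bar x;d)\le\langle\xi,d\rangle-\varepsilon$. On the other hand, $\xi\in\partial_Tf(\bar x)$ gives $\langle\xi,d\rangle\le f'(\bar x;d)$. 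Combining the two inequalities gives $\langle\xi,d\rangle\le\langle\xi,d\rangle-\varepsilon$, a contradiction. Hence $\xi\in\hat{\partial}f(\bar x)$.

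\textbf{Main obstacle.} The only delicate step is in the converse. Tangential convexity controls $f$ only along fixed rays $\bar x+td$, whereas the Fréchet condition involves arbitrary approach directions $d_k$ that vary with $k$. The Lipschitz bound is exactly what lets us replace $d_k$ by its limit $d$ at a cost of $Lt_k\|d_k-d\|$, which is $o(t_k)$ and so vanishes after dividing by $t_k$. Without this uniformity the converse can fail, which is why the hypothesis appears in the statement.
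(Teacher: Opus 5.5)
Your proof is correct and follows essentially the same route as the paper. For the first inclusion you restrict the Fréchet $\liminf$ to rays $\bar x+td$. For the converse you normalize $x_k-\bar x$, extract a convergent direction, use the Lipschitz bound to replace $d_k$ by its limit $d$, and then invoke $\langle\xi,d\rangle\le f'(\bar x;d)$. Framing the converse by contradiction, with a sequence along which the Fréchet quotient stays below $-\varepsilon$, is only a slightly more careful handling of the $\liminf$ than the paper's direct argument along an arbitrary sequence.
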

\begin{proof}
If $\hat{\partial}f(\bar{x})\neq\emptyset,$ nothing remains to prove. Suppose that $\xi\in\hat{\partial}f(\bar{x}).$ Then one has
\[
\underset{x\rightarrow\bar{x}}{\liminf}~\frac{f(x)-f(\bar{x})-\langle\xi,x-\bar{x}\rangle}{||x-\bar{x}||}\geq0.
\]
Take arbitrary $u\in\mathbb{R}^n\setminus\{0\}$ and consider the sequence $t_k\downarrow0.$ The above immediately implies that 
\[
\frac{1}{||u||}f'(\bar{x};u)=\underset{k\rightarrow\infty}{\lim}~\frac{f(\bar{x}+t_ku)-f(\bar{x})}{t_k||u||}\geq\langle\xi,\frac{u}{||u||}\rangle.
\] 
Hence we deduce that $\xi\in\partial_Tf(\bar{x}).$ To complete the proof of Lemma, suppose that $f$ is Lipschitz near $\bar{x}$ and take $\xi\in\partial_Tf(\bar{x}).$ Fix an arbitrary sequence $x_k\rightarrow\bar{x}$ such that $x_k\neq\bar{x}$ and define $u_k:=\frac{x_k-\bar{x}}{||x_k-\bar{x}||}$ and $t_k:=||x_k-\bar{x}||.$ Then one can assume without loss of generality that $\underset{k\rightarrow\infty}{\lim} v_k=v\in\mathbb{R}^n.$ Since $\xi\in\partial_Tf(\bar{x})$ and $f$ is Lipschitz at $\bar{x},$ one has
\begin{align*}
\underset{k\rightarrow\infty}{\lim}\langle\xi,\frac{x_k-\bar{x}}{||x_k-\bar{x}||}\rangle & = \langle\xi,v\rangle\\
 & \leq f'(\bar{x};v)\\
 & =\underset{k\rightarrow\infty}{\lim}~\frac{f(\bar{x}+t_kv)-f(\bar{x})}{t_k}\\
 & =\underset{k\rightarrow\infty}{\lim}~\frac{f(\bar{x}+t_kv_k)-f(\bar{x})}{t_k}\\
 & =\underset{k\rightarrow\infty}{\lim}~\frac{f(x_k)-f(\bar{x})}{||x_k-\bar{x}||}.
\end{align*}
The above especially implies that $\xi\in\hat{\partial}f(\bar{x})$ and completes the proof of Lemma.
\qed
\end{proof}
The following example shows that if $f$ is not Lipschitz then the reverse inclusion in Lemma \ref{lem2} may not hold. 
\begin{example}
Consider the function $f:\mathbb{R}^2\rightarrow\mathbb{R}$ defined by
\[
f(x_1,x_2):=\left\{
\begin{array}{ll}
\frac{x_1^3}{x_2}+x_1, &\  x_2\neq0\\
x_1,  & \ x_2=0,
\end{array} \right.
\]
We claim that $f$ is not locally Lipschitz at $\bar{x}=(0,0).$ If we consider the sequence $x_k:=(x_{1k},x_{2k})=(-\frac{1}{k},\frac{1}{k^4})\rightarrow\bar{x},$ then one has for each $k$
\[
|f(x_k)-f(\bar{x})|=k+\frac{1}{k}\geq k||x_k-\bar{x}||.
\]
However, we can show that $f$ is G\^{a}teaux differentiable at $\bar{x}.$ By taking $d=(d_1,d_2)\in\mathbb{R}^2$ such that $d_2\neq0,$ we get
\[
\underset{t\downarrow0}{\lim}\frac{f(\bar{x}+td)-f(\bar{x})}{t}=\underset{t\downarrow0}{\lim}\frac{td_1^3}{d_2}+d_1=d_1,
\]
which immediately implies that for all $d\in\mathbb{R}^2,$
$
f'(\bar{x};d)=d_1.
$
Hence $f$ is G\^{a}teaux differentiable at $\bar{x},$ and particularly, tangentially convex at this point with $\partial_Tf(\bar{x})=\{(1,0)\}.$ Meanwhile, $\hat{\partial}f(\bar{x})=\emptyset.$ Suppose on the contrary that $\xi\in\hat{\partial}f(\bar{x})$ and consider the sequence $x_k:=(x_{1k},x_{2k})=(-\frac{1}{k},\frac{1}{k^4})\rightarrow\bar{x}.$ Then one has 
\begin{align*}
0 & \leq\underset{k\rightarrow\infty}{\lim}~\frac{f(x_k)-f(\bar{x})-\langle\xi,x_k-\bar{x}\rangle}{||x_k-\bar{x}||}\\
 & =\underset{k\rightarrow\infty}{\lim}~\frac{-k^2+1+\xi_1-\frac{\xi_2}{k^3}}{\sqrt{1+\frac{1}{k^6}}}\\
 & =-\infty.
\end{align*}
 which is a contradiction. Hence, $\emptyset=\hat{\partial}f(\bar{x})\subsetneq\partial_Tf(\bar{x}).$
\end{example}
The following lemma recalling from \cite{MMN1} shows that the tangential subdifferential of a tangentially convex function is a bounded subset of $\mathbb{R}^n.$
\begin{lemma}
Suppose that $f:\mathbb{R}^n\rightarrow\mathbb{R}\cup\{+\infty\}$ is tangentially convex at $\bar{x}\in\dom f.$ Then $\partial_Tf(\bar{x})$ is bounded.
\end{lemma}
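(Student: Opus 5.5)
The plan is to use that $f'(\bar{x};\cdot)$ is a finite-valued convex function on all of $\mathbb{R}^n$, and that $\partial_Tf(\bar{x})$ is exactly the set of linear minorants of this sublinear function.

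First, by the definition of tangential convexity, the limit in (\ref{11111}) exists and is finite for every $d\in\mathbb{R}^n$, and $p(d):=f'(\bar{x};d)$ is convex. As noted after the definition, $p$ is positively homogeneous, hence sublinear and real-valued on $\mathbb{R}^n$.

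Next I will bound $p$ on the unit ball without invoking any continuity theorem for convex functions. Let $e_1,\dots,e_n$ be the standard basis. Any $d\in\mathbb{B}$ can be written as $d=\sum_{i=1}^n d_ie_i$ with $|d_i|\le 1$. Writing $d_i e_i = |d_i|(\pm e_i)$, subadditivity and positive homogeneity of $p$ give
\[
p(d)\le \sum_{i=1}^n |d_i|\, p(\operatorname{sgn}(d_i)e_i)\le M:=\sum_{i=1}^n \max\{p(e_i),p(-e_i),0\},
\]
and $M<\infty$ because every value of $p$ is finite.

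Now take $\xi\in\partial_Tf(\bar{x})$ with $\xi\neq0$, and set $d=\xi/\|\xi\|\in\mathbb{B}$. The definition of $\partial_Tf(\bar{x})$ yields
\[
\|\xi\|=\langle\xi,d\rangle\le p(d)\le M.
\]
Hence $\partial_Tf(\bar{x})\subseteq M\mathbb{B}$, so it is bounded. (Nonemptiness is not needed for this argument.)

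The only real point is the step bounding $p$ uniformly on $\mathbb{B}$. Finiteness of the directional derivative in every direction is the key hypothesis here: if $p$ were allowed to take the value $+\infty$, this bound, and with it boundedness, could fail. The basis-decomposition estimate above handles it directly, so I expect no serious obstacle.
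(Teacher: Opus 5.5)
Your proof is correct. Convexity and positive homogeneity make $p=f'(\bar{x};\cdot)$ subadditive, and $p(0)=0$ takes care of coordinates with $d_i=0$. The coordinate decomposition then bounds $p$ on $\mathbb{B}$ by the finite constant $M$. Testing $\xi\in\partial_Tf(\bar{x})$ against $d=\xi/\|\xi\|$ gives $\partial_Tf(\bar{x})\subseteq M\mathbb{B}$.

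The paper does not prove this lemma; it quotes it from \cite{MMN1}. The shortest route with the paper's own tools goes through Remark~\ref{lltc}. There $\partial_Tf(\bar{x})=\partial\tilde{f}(0)$ for the real-valued convex function $\tilde{f}=f'(\bar{x};\cdot)$. The convex subdifferential of such a function at any point of $\mathbb{R}^n$ is nonempty and compact, because a finite convex function on $\mathbb{R}^n$ is continuous, indeed locally Lipschitz.

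Your basis-decomposition estimate is the elementary core of that continuity theorem, specialized to the sublinear case. You replace an appeal to a standard convex-analysis result with a self-contained estimate that gives an explicit radius $M$, depending only on the $2n$ values $f'(\bar{x};\pm e_i)$.

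The citation route also yields nonemptiness of $\partial_Tf(\bar{x})$, which the paper asserts just before the lemma and which needs a separation argument. As you note, nonemptiness is not needed for boundedness.
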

\begin{remark}\label{lltc}
Let $f:\mathbb{R}^n\rightarrow\mathbb{R}\cup\{+\infty\}$ be a tangentially convex function at $\bar{x}\in\dom f.$ Then defining the convex function  $\tilde{f}(d):=f'(\bar{x};d),$  one can observe that for each $d\in\mathbb{R}^n$, $ \tilde{f}'(0;d)=\tilde{f}(d)=f'(\bar{x};d).$ Thus it is easy to show that $\partial\tilde{f}(0)=\partial_Tf(\bar{x}).$

\end{remark}
Finally in this section, let us recall the notion of Fenchel conjugate  from \cite{Borwein}.
The Fenchel conjugate of the function $f:\mathbb{R}^n\rightarrow\mathbb{R}\cup\{+\infty\}$ is defined  by $f^*:\mathbb{R}^n\rightarrow\mathbb{R}\cup\{\pm\infty\}$ with
$  f^*(\xi):=\sup_{x\in\mathbb{R}^n}\{\langle\xi,x\rangle-f(x)\}$, $\xi\in\mathbb{R}^n. $
Obviously, $f^*$ is a convex function and never takes the value $-\infty$ provided that $\dom f\neq\emptyset.$
 According to the Fenchel-Young inequality \cite[Proposition 3.3.4]{Borwein}, one has for each $\xi\in\mathbb{R}^n$ and all $x\in\dom f,$
\begin{equation}\label{eqf1}
  f(x)+f^*(\xi)\geq\langle\xi,x\rangle.
\end{equation}
 Moreover, the convexity of $f$ implies the equality in (\ref{eqf1}) if and only if $\xi\in\partial f(x).$

\section{Max Function}\label{max}
In this section, we try to verify some facts about a certain ``max function" defined this way.

Consider a compact subset $V\subseteq\mathbb{R}^q,$ 
 and the functions
 $g,h:\mathbb{R}^n\times V\rightarrow\mathbb{R}\cup\{+\infty\}.$ Let the function
 $\psi:\mathbb{R}^n\rightarrow\mathbb{R}\cup\{+\infty\}$
  and the multifunction $V:\mathbb{R}^n\rightrightarrows \mathbb{R}^q $ be defined respectively by
 \begin{equation}\label{psi1}
 \psi(x):=\max_{v\in V}\ g(x,v)-h(x,v),
 \end{equation}
 \begin{equation}\label{eqv1}
  V(x):=\Big\{v\in V\ |\ \psi(x)=g(x,v)-h(x,v)\Big\}.
 \end{equation}
From now on, we put some assumptions on $g,~h$ as follows:
\begin{enumerate}
   \item [$(A_1)$] The functions $g$ and $h$ are u.s.c. and l.s.c., respectively, at each $(x,v)\in\mathbb{R}^n\times V.$
    \item [$(A_2)$] $g$ and $h$ are locally Lipschitz in $x,$ uniformly for $v\in V;$ i.e., for each $x\in\mathbb{R}^n,$ one can find an open neighbourhood $N(x)$ of $x$ and a positive number $K$ such that for every $y,z\in N(x)$ and $v\in V:$ 
        \[
       \max\Big\{ |(g(y,v)-g(z,v)|,|(h(y,v)-h(z,v)|\Big\}\leq K||y-z||.
        \]
   \item [$(A_3)$] $g$ and $h$ are  tangentially convex functions with respect to $x;$ i.e., $g'_x(x,v;d)$ and $h'_x(x,v;d)$ which are the directional derivative of $k$ and $h$ with respect to $x,$ respectively, exist, are finite for each $(x,v)\in\mathbb{R}^n\times V,$ and are convex functions of $d\in\mathbb{R}^n.$ 
   \item [$(A_4)$] The mappings $v\mapsto g'_x(x,v;d)$ and  $v\mapsto h'_x(x,v;d)$ are u.s.c. and l.s.c., respectively, at each $v\in V.$
 \end{enumerate}

The next theorem investigates a connection between the directional derivatives of the functions $\psi$ and $g, h.$  Indeed, it is a nonsmooth version of Danskin's theorem for max-functions (see \cite{fclarke,p2} for more details). 

\begin{theorem}\label{l1}
Consider the max function $\psi$  in (\ref{psi1}). The directional derivative  of $\psi$ at $x\in\mathbb{R}^n$ exists and is obtained by 
 \begin{equation}\label{reql1}
 \psi'(x;d)=\max_{v\in V(x)}g'_x(x,v;d)-h'_x(x,v;d),\ \forall d\in\mathbb{R}^n.
 \end{equation}
 \end{theorem}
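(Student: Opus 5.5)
Set $f(x,v):=g(x,v)-h(x,v)$. By $(A_1)$, $f$ is u.s.c. on $\mathbb{R}^n\times V$. By $(A_2)$, $f(\cdot,v)$ is locally Lipschitz in $x$ with a constant $2K$ that does not depend on $v$. By $(A_3)$, $f(\cdot,v)$ has directional derivative $f'_x(x,v;d)=g'_x(x,v;d)-h'_x(x,v;d)$. By $(A_4)$, the map $v\mapsto f'_x(x,v;d)$ is u.s.c. Since $V$ is compact and $f(x,\cdot)$ is u.s.c., the maximum in (\ref{psi1}) is attained, so $V(x)\neq\emptyset$. Moreover $V(x)$ is closed, hence compact: if $v_k\in V(x)$ and $v_k\to v$, then $f(x,v)\ge\limsup_k f(x,v_k)=\psi(x)\ge f(x,v)$. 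The same u.s.c. argument shows that the right-hand side of (\ref{reql1}) is a maximum of a u.s.c. function over a compact set, so it is attained. Fix $d$ and write $M(d)$ for this maximum. The plan is to prove
\[
M(d)\le \liminf_{t\downarrow0}\frac{\psi(x+td)-\psi(x)}{t}
\quad\text{and}\quad
\limsup_{t\downarrow0}\frac{\psi(x+td)-\psi(x)}{t}\le M(d),
\]
which shows that the limit exists and equals $M(d)$.

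\emph{Lower bound.} For any $v\in V(x)$ we have $\psi(x+td)-\psi(x)\ge f(x+td,v)-f(x,v)$. Dividing by $t$ and letting $t\downarrow0$ gives $\liminf\ge f'_x(x,v;d)$, because this directional derivative exists by $(A_3)$. Taking the supremum over $v\in V(x)$ gives the first inequality.

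\emph{Upper bound.} Choose $t_k\downarrow0$ realizing the $\limsup$, and pick $v_k\in V(x+t_kd)$. By compactness we may assume $v_k\to\bar v\in V$. First we show $\bar v\in V(x)$. By $(A_2)$, $|\psi(x+t_kd)-\psi(x)|\le 2Kt_k\|d\|$, since the Lipschitz bound is uniform in $v$ and therefore passes to the max. Hence
\[
f(x,v_k)\ge f(x+t_kd,v_k)-2Kt_k\|d\|=\psi(x+t_kd)-2Kt_k\|d\|\to\psi(x).
\]
By u.s.c. of $f(x,\cdot)$, this yields $f(x,\bar v)\ge\psi(x)$, so $\bar v\in V(x)$. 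Next, $\psi(x)\ge f(x,v_k)$ gives
\[
\frac{\psi(x+t_kd)-\psi(x)}{t_k}\le\frac{f(x+t_kd,v_k)-f(x,v_k)}{t_k}.
\]
It remains to show
\[
\limsup_k\frac{f(x+t_kd,v_k)-f(x,v_k)}{t_k}\le f'_x(x,\bar v;d),
\]
because the right-hand side is at most $M(d)$.

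\emph{Main obstacle.} This last inequality is the hard step, since it requires joint control of the difference quotient in $(t,v)$. Pointwise existence of $f'_x(x,v;d)$ for each $v$, together with $(A_4)$, does not obviously give uniformity in $v$. My first attempt is to use the one-variable mean value theorem (Lebourg's version, or simply the Lipschitz function $s\mapsto f(x+sd,v_k)$, which is absolutely continuous). This writes the quotient as an average over $s\in[0,t_k]$ of $f'_x(x+sd,v_k;d)$, which exists a.e. Then I would invoke $(A_4)$ together with an upper semicontinuity of $(y,v)\mapsto f'_x(y,v;d)$ at $(x,\bar v)$. If the hypotheses only give u.s.c. in $v$ at the fixed base point, I would instead read $(A_4)$ as the standard Danskin-type hypothesis used in \cite{fclarke,p2}, namely that the quotients $\frac{f(x+td,v)-f(x,v)}{t}$ converge to $f'_x(x,v;d)$ in an upper-semicontinuous way jointly in $(t,v)$, and cite the nonsmooth Danskin theorem there for this step. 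The uniform Lipschitz bound $2K\|d\|$ keeps all the quotients bounded, so only this u.s.c. passage to the limit needs care. Once it is in place, combining the two inequalities proves (\ref{reql1}).
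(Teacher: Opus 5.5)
\textbf{This is a genuine gap, and it cannot be closed under the stated hypotheses.}

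Everything you actually carry out is correct:
\begin{itemize}
\item $V(x)$ is nonempty and compact, and the right-hand side of (\ref{reql1}) is attained;
\item the lower bound holds;
\item the extraction $v_k\to\bar v\in V(x)$ via the uniform Lipschitz bound works;
\item the upper bound reduces to
\[
\limsup_{k}\frac{f(x+t_kd,v_k)-f(x,v_k)}{t_k}\le f'_x(x,\bar v;d).
\]
\end{itemize}
The paper does none of this. Its entire proof is the remark that $g-h$ satisfies the hypotheses of \cite[Theorem~1]{p2}, which is exactly your fallback. The step you flag, however, cannot be derived from $(A_1)$--$(A_4)$ as stated, because the statement is false under them.

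\emph{Counterexample.} Take $n=1$, $V=[0,1]$, $h\equiv0$, and let $\theta(s)=\sin^2(\pi s/2)$ on $[0,2]$ and $\theta=0$ elsewhere. This is a $C^1$ bump with $\theta(1)=1$ and $\theta'(\tfrac12)=\tfrac{\pi}{2}$. Define
\[
g(x,v)=v\,\theta(x/v)-v^2\ \ (v>0),\qquad g(x,0)=0 .
\]
The hypotheses all hold:
\begin{itemize}
\item $g$ is continuous, and $|g(y,v)-g(z,v)|\le\tfrac{\pi}{2}|y-z|$ for every $v$;
\item each $g(\cdot,v)$ is $C^1$;
\item for fixed $x$, the map $v\mapsto g'_x(x,v;d)=\theta'(x/v)d$ vanishes for all small $v>0$, because either $x/v\to\pm\infty$ or $x=0$ and $\theta'(0)=0$. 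So this map is even continuous on $V$.
\end{itemize}
Yet $V(0)=\{0\}$, so the right-hand side of (\ref{reql1}) at $x=0$, $d=1$ equals $0$. On the other hand, $\psi(t)\ge g(t,t)=t-t^2$, which gives $\liminf_{t\downarrow0}\psi(t)/t\ge1$. In your reduction the failure appears exactly where you expected it: $v_k\to0$, while the quotients stay $\ge 1-t_k$. Pointwise directional differentiability plus upper semicontinuity in $v$ at a fixed base point gives no uniformity in $(t,v)$.

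\emph{What is needed.} A joint hypothesis must be added. The natural one is upper semicontinuity of $(y,v)\mapsto g'_x(y,v;d)-h'_x(y,v;d)$ at the points $(x,\bar v)$ with $\bar v\in V(x)$. With it, your first idea finishes cleanly. The map $s\mapsto f(x+sd,v_k)$ is Lipschitz and has right derivative $f'_x(x+sd,v_k;d)$ everywhere. Hence the difference quotient is an average over $[0,t_k]$ of values that are eventually below $f'_x(x,\bar v;d)+\epsilon$.

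Reading $(A_4)$ as such a joint condition, or citing \cite{p2,fclarke} for this step, changes the hypotheses rather than proving the statement. The example shows that no correct theorem whose assumptions follow from $(A_1)$--$(A_4)$ can supply this step.

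You also cannot borrow the paper's Lemma~\ref{00601}, which asserts precisely this joint upper semicontinuity. The same example violates it at $(0,0)$: along $(x^k,v^k)=(\tfrac{1}{2k},\tfrac1k)$ one has $g'_x(x^k,v^k;1)=\theta'(\tfrac12)>0=g'_x(0,0;1)$.
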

 \begin{proof}
  It is easily seen that the function $g-h$ satisfies all the assumptions of \cite[Theorem 1]{p2}. Thus it follows immediately that 
 \begin{equation*}\label{}
 \psi'(x;d)=\max_{v\in V(x)}(g-h)'_x(x,v;d),
 \end{equation*} 
for each $d\in\mathbb{R}^n.$ Now according to the definition of the directional derivative for $g-h$ we get (\ref{reql1}), and the proof is completed.
  \qed
 \end{proof}
 The next proposition provides an important property for the max function $\psi'$ defined in (\ref{reql1}).
 \begin{proposition}\label{rp002}
 Assume that the functions $g'_x(.,.;d)$ and $h'_x(.,.;d)$ are nonnegative for each $d\in\mathbb{R}^n.$ Then $\psi'(.;d)$ in (\ref{reql1}) is a DC function.
 \end{proposition}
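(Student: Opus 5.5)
For fixed $d\in\mathbb{R}^n$ I would view $x\mapsto\psi'(x;d)$ through the representation (\ref{reql1}) of Theorem \ref{l1}, namely $\psi'(x;d)=\max_{v\in V(x)}\big[g'_x(x,v;d)-h'_x(x,v;d)\big]$. I would then produce an explicit splitting $\psi'(\cdot;d)=\varphi_1(\cdot)-\varphi_2(\cdot)$, where both pieces are max-type functions of the same kind. I would also check that each piece carries the convexity property in the sense in which the paper uses the word ``DC'' for such max functions. The idea is the classical trick for writing a max of differences as a difference of maxima: add and subtract a common ``large'' term that does not depend on $v$.

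\textbf{Step 1: well-posedness.} By $(A_1)$, $v\mapsto g(x,v)-h(x,v)$ is u.s.c. on the compact set $V$. Hence $\psi(x)$ is attained, and $V(x)$ is a nonempty closed, hence compact, subset of $V$. By $(A_4)$, $v\mapsto g'_x(x,v;d)$ is u.s.c. and $v\mapsto h'_x(x,v;d)$ is l.s.c. on $V$. Therefore $v\mapsto g'_x(x,v;d)-h'_x(x,v;d)$ is u.s.c., and its maximum over the compact set $V(x)$ is attained and finite. The same argument applies to $v\mapsto g'_x(x,v;d)$ itself. For $h'_x$, I would use the bound $0\le h'_x(x,v;d)\le K\|d\|$, which follows from $(A_2)$ and the nonnegativity hypothesis. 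This shows that
\[
\varphi_2(x):=\sup_{v\in V(x)}h'_x(x,v;d)
\]
is finite.

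\textbf{Step 2: the splitting.} Define
\[
\varphi_1(x):=\max_{v\in V(x)}\Big[g'_x(x,v;d)+\varphi_2(x)-h'_x(x,v;d)\Big].
\]
Since $\varphi_2(x)$ does not depend on $v$, it can be taken outside the maximum, and this gives $\psi'(x;d)=\varphi_1(x)-\varphi_2(x)$ identically in $x$. The nonnegativity hypothesis then gives two facts. First, $\varphi_2(x)-h'_x(x,v;d)\ge0$ for every $v\in V(x)$. Second, both $g'_x(x,v;d)$ and $\varphi_2(x)-h'_x(x,v;d)$ are nonnegative. So $\varphi_1$ is a pointwise maximum of sums of nonnegative functions, and $\varphi_2$ is a pointwise maximum of nonnegative functions. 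This is exactly the shape needed to apply the closure properties recalled in Section \ref{pre}: sums, positive multiples and products of nonnegative tangentially convex functions remain tangentially convex. Together with the max-function rule of Theorem \ref{l1}, applied once more, this yields the required convexity-type property of each piece.

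\textbf{Main obstacle.} The delicate point is the last step, because $\varphi_1$ and $\varphi_2$ are maxima over the \emph{moving} index set $V(x)$, and Theorem \ref{l1} is stated for maxima over a fixed compact set. To handle this, I would first try to rewrite both maxima over the fixed set $V$. The plan is to penalize indices outside $V(x)$ by the nonpositive u.s.c. term $g(x,v)-h(x,v)-\psi(x)$, which vanishes exactly on $V(x)$, and then invoke the assumptions $(A_1)$--$(A_4)$ for the new integrands. If this reduction fails, I would fall back on proving directly that $\varphi_1$ and $\varphi_2$ are u.s.c.\ and finite, and that they have the required directional behaviour. That argument would rest on the compactness of $V$ and the semicontinuity in $(A_4)$, and this is where I expect most of the work to lie.
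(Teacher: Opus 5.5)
\textbf{There is a genuine gap.} Your identity $\psi'(\cdot;d)=\varphi_1-\varphi_2$ holds for \emph{any} finite $\varphi_2$. So everything rests on the convexity of the two pieces, and that step is neither proved nor true.

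The first problem is the variable. For fixed $d$, nothing in $(A_1)$--$(A_4)$ gives convexity, or even tangential convexity, of $x\mapsto g'_x(x,v;d)$ or $x\mapsto h'_x(x,v;d)$; directional derivatives, viewed as functions of the base point, are in general not even continuous. The closure rules for nonnegative tangentially convex functions therefore have nothing to act on. Theorem~\ref{l1} only produces the formula (\ref{reql1}), not convexity, and its hypotheses are not available for your new integrands. Tangential convexity of the pieces would in any case give a DTC splitting, not a DC one. The paper's proof works in the direction variable at a fixed point $\bar x$, consistently with the example after the proposition, which concludes that $\psi'(\bar{x};d)$ is DC \emph{with respect to $d$}. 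There $(A_3)$ makes each $d\mapsto g'_x(\bar x,v;d)$ and $d\mapsto h'_x(\bar x,v;d)$ sublinear, hence convex. Moreover $V(\bar x)$ is a fixed compact index set, so the moving index set you single out as the main obstacle never arises. If you keep the $x$-reading with nonnegativity imposed at every $x$, the statement degenerates. Indeed, $g'_x(x,v;\pm d)\ge 0$ makes the continuous function $t\mapsto g(x+td,v)$ both nondecreasing and nonincreasing. Hence $g$, $h$ and $\psi$ are constant in $x$ and $\psi'(\cdot;d)\equiv 0$, which your argument does not use.

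The second problem is the decomposition itself: even in $d$, where convexity is available, it does not yield convex pieces. The term $\varphi_2-h'_x(\bar x,v;\cdot)$ is a supremum minus one of its members, which is DC but in general not convex, and nonnegativity does not help. For instance, take $g\equiv 0$, $V(\bar x)=\{v_1,v_2\}$, $h'_x(\bar x,v_1;d)=|d_1|$ and $h'_x(\bar x,v_2;d)=|d_2|$. Then $\varphi_2(d)=\max\{|d_1|,|d_2|\}$, but $\varphi_1(d)=\big||d_1|-|d_2|\big|$, which is not convex.

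The missing idea is the construction of \cite[Proposition 2.2]{Correa} that the paper invokes, the same one used in Proposition~\ref{DTCproperty1}. Replace the single supremum by sums over finite subsets $F\subseteq V(\bar x)$, setting $g_F(d):=\max_{\nu\in F}\big(g'_x(\bar x,\nu;d)+\sum_{\omega\in F\setminus\{\nu\}}h'_x(\bar x,\omega;d)\big)$ and $h_F(d):=\sum_{\omega\in F}h'_x(\bar x,\omega;d)$. These are convex in $d$, since only finite sums and finite maxima of convex functions occur, and $g_F-h_F=\max_{\nu\in F}\big(g'_x(\bar x,\nu;\cdot)-h'_x(\bar x,\nu;\cdot)\big)$. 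Nonnegativity is used exactly to make $h_F$ nondecreasing in $F$. This gives $\psi'(\bar x;\cdot)=\sup_F g_F-\sup_F h_F$ on $\dom(\sup_F h_F)$, a difference of two convex functions.
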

 \begin{proof}
 Using Theorem \ref{l1} together with \cite[Proposition 2.2]{Correa}, 
 one can easily find that $\psi'(.,d)$ is a DC function for each $d\in\mathbb{R}^n$.
 \qed
 \end{proof}
The next example illustrates the validity of Theorem \ref{l1}  and Proposition \ref{rp002}.
\begin{example}
Consider the following max function
  \[
    \displaystyle \psi(x)=\underset{v\in V}{\max}~(\cos v_1v_2)|x_1|-e^{\sin{|x_2|}}+1,
    \]
where $V:=\{v=(v_1,v_2)\in\mathbb{R}^2\ |\ v_1,v_2\in[0,\frac{\pi}{2}]\}.$ Defining  $g(x,v):=(\cos v_1v_2)|x_1|$ and $h(x,v):=e^{\sin{|x_2|}}-1,$ it is clear that  $g$ and $h$ are satisfied assumptions $A_1-A_4$   at $\bar{x}=(0,0)\in\mathbb{R}^2.$
A simple computation gives us 
$\displaystyle \psi(x)=|x_1|-e^{\sin{|x_2|}}+1$
and
\[
V(x)=
\left\{
\begin{array}{ll}
V, & \ x_1=0\\
\{(v_1,v_2)\in V~|~v_1v_2=0\},  &\ x_1\neq0.
\end{array} \right.
\]
Therefore, for each $v\in V(\bar{x}),~g'_x(\bar{x},v;d)=|d_1|,~h'_x(\bar{x},v;d)=|d_2|.$ Further, it follows that
$\psi'(\bar{x};d)=|d_1|-|d_2|,$ for each $d=(d_1,d_2)\in\mathbb{R}^2,~ v\in V(\bar{x})$ which implies (\ref{reql1}).  
 Moreover,  $g'_x(\bar{x},v;d)$  and $h'_x(\bar{x},v;d)$ are nonnegative at each $d\in\mathbb{R}^2$ and due to Proposition \ref{rp002}, $\psi'(\bar{x};d)$ is a DC function w.r.t. $d.$
\end{example} 
In the following result, we show a criterion to ensure that the supremum of DTC functions is still a DTC function.
For this purpose, it is necessary to take another assumption in addition to the previous assumptions $A_1-A_4.$
\begin{enumerate}
   \item [$(A_5)$] The functions $g$ and $h$ are all nonnegative at each $(x,v)\in\mathbb{R}^n\times V.$
\end{enumerate}
We also denote the family of finite subsets of $V$ with $\mathcal{P}(V)$ and define the following functions
\[
g_F(x):=\underset{\nu\in F}{\max}~\big(g_\nu(x)+\sum_{\omega\in F\setminus\{\nu\}}h_\omega(x)\big),~~~~~h_F(x):=\sum_{\omega\in F}h_s(x),
\]
where $F\in\mathcal{P}(V).$ Further, we define the following supremum functions
\begin{equation}\label{sup1}
g(x):=\underset{F\in\mathcal{P}(V)}{\sup} g_F(x),~~~~
h(x):=\underset{F\in\mathcal{P}(V)}{\sup} h_F(x).
\end{equation}
 According to \cite[Theorem 1]{p2}, 
it is easy to check that $g_F,~h_F$ and hence $g,~h$ are all tangentially convex functions at $x\in\mathbb{R}^n.$ 
 
\begin{proposition}\label{DTCproperty1}
Consider a family of DTC functions $\psi_\nu=g_\nu-h_\nu,$ where $g_\nu$ and $h_\nu$ satisfy  assumptions $A_1-A_5.$ Then, $\psi:=\underset{\nu\in V}{\sup}~ \psi_\nu$ is a DTC function over $\dom h.$ Further, $\psi(x)=g(x)-h(x)$ for all $x\in\dom h.$
 \end{proposition}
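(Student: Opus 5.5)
The plan is to use the algebraic identity behind the classical ``sup of DC functions'' trick (as in \cite{Correa}). For a finite set $F\in\mathcal{P}(V)$ and a point $x$, I will first show that
\[
g_F(x)-h_F(x)=\max_{\nu\in F}\Big(g_\nu(x)+\sum_{\omega\in F\setminus\{\nu\}}h_\omega(x)-\sum_{\omega\in F}h_\omega(x)\Big)=\max_{\nu\in F}\big(g_\nu(x)-h_\nu(x)\big)=\max_{\nu\in F}\psi_\nu(x).
\]
This step is purely algebraic. It only uses that $h_\omega(x)$ is finite, which holds for $x\in\dom h$ because $A_5$ gives $h_\omega\ge 0$, so $h_\omega(x)\le h_F(x)\le h(x)<\infty$.

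Next I pass to the supremum over $F$. Fix $x\in\dom h$. By $A_5$, the map $F\mapsto h_F(x)$ is nondecreasing with respect to inclusion, and so is $F\mapsto g_F(x)$, since adding an index only adds nonnegative terms inside each bracket and adds one more bracket. For any finite $F$ we have $g_F(x)=\max_{\nu\in F}\psi_\nu(x)+h_F(x)\le \psi(x)+h(x)$, hence $g(x)\le\psi(x)+h(x)$.

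For the reverse inequality, take $\varepsilon>0$. Choose $\nu_0$ with $\psi_{\nu_0}(x)>\psi(x)-\varepsilon$; here $\psi(x)$ is finite, being a maximum over the compact $V$ under $A_1$, so this is really a maximum. Choose a finite $F_0$ with $h_{F_0}(x)>h(x)-\varepsilon$. Put $F=F_0\cup\{\nu_0\}$. By monotonicity,
\[
g(x)\ge g_F(x)=\max_{\nu\in F}\psi_\nu(x)+h_F(x)\ge \psi_{\nu_0}(x)+h_{F_0}(x)>\psi(x)+h(x)-2\varepsilon.
\]
Letting $\varepsilon\downarrow0$ gives $\psi=g-h$ on $\dom h$, and in particular $g(x)<\infty$ there.

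Finally, the DTC property follows from the tangential convexity of $g$ and $h$ at points of $\dom h$, which was noted after (\ref{sup1}) via \cite[Theorem 1]{p2}. I would invoke that remark directly. If more detail is needed, the supporting argument is this: $g_F$ and $h_F$ are finite sums and maxima of functions that are tangentially convex and locally Lipschitz uniformly (by $A_2$ and $A_3$). The suprema $g$ and $h$ are of Danskin type, so Theorem~\ref{l1}, or \cite[Theorem 1]{p2} applied to the index set $\mathcal{P}(V)$, yields finite directional derivatives that are sublinear in $d$.

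I expect this last point to be the main obstacle. The index set $\mathcal{P}(V)$ is not compact, and $h=\sup_F h_F$ may be infinite off $\dom h$. So the honest route is to lean on the paper's earlier assertion for tangential convexity of $g,h$, and restrict the DTC claim to $\dom h$, where by the computation above both $g$ and $h$ are finite.
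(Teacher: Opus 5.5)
Your proof is essentially the paper's. Both use the same Correa-type argument to get $\psi=g-h$ on $\dom h$: the identity $g_F-h_F=\max_{\nu\in F}\psi_\nu$ (which the paper writes as $g_F(x)-h_F(x)=\psi_{\nu(F,x)}(x)$), the monotonicity of $F\mapsto h_F(x)$, and an $\varepsilon$-approximation of $h(x)$. Both then take the DTC property from the remark after (\ref{sup1}). Your version is slightly cleaner, since the bound $g_F(x)\le\psi(x)+h(x)$ for every $F$ shows directly that $g(x)<\infty$ on $\dom h$, whereas the paper merely sets that case aside as trivial. Your caution about that remark is also well placed: \cite[Theorem 1]{p2} needs a compact index set and $\mathcal{P}(V)$ is not one, so the tangential convexity of $g,h$ is just as unproved in the paper's argument as in yours.
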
 
 \begin{proof}
  It is clear that for each $F_1\subseteq F_2,$
  \begin{equation}\label{sup2}
  \sum_{\omega\in {F_1}}h_\omega(x)\leq\sum_{\omega\in {F_2}}h_\omega(x).
\end{equation}   
For given $F\in\mathcal{P}(V)$ and $x\in\mathbb{R}^n,$ we denote by $\nu(F,x)$ an index in $F$ such that
\[
g_F(x)=g_{\nu(F,x)}(x)+\sum_{\omega\in F\setminus\{\nu(F,x)\}}h_\omega(x).
\]
For any $\nu\in V$ and $F\in\mathcal{P}(V)$ we have
\begin{align*}
\psi_\nu(x)=g_\nu(x)-h_\nu(x) & =g_\nu(x)+\sum_{\omega\in{F\setminus \{\nu\}}}h_\omega(x)-\sum_{\omega\in {F\cup \{\nu\}}}h_\omega(x)\\
& \leq \underset{\nu\in F}{\max}~\Big(g_\nu(x)+\sum_{\omega\in{F\setminus \{\nu\}}}h_\omega(x)\Big)-\sum_{\omega\in {F\cup \{\nu\}}}h_\omega(x)\\
& \leq g_{F\cup\{\nu\}}-h_{F\cup\{\nu\}}.
\end{align*}
Now, fix $x\in\dom h.$ Then for each arbitrary $\varepsilon>0,$ one can find some $F\in\mathcal{P}(V)$ such that $h(x)-\varepsilon\leq\sum_{\omega\in {F}}h_\omega(x)\leq h_{F\cup \{\nu\}}(x),$ where the last inequality is due to $(\ref{sup2}).$ Hence, for any $\nu\in V,$
\[
\psi_\nu(x)\leq g_{F\cup\{\nu\}}-h_{F\cup\{\nu\}}\leq g(x)-h(x)+\varepsilon.
\]
Since $\varepsilon>0$ and $\nu\in V$ were arbitrarily chosen, we get $\psi(x)\leq g(x)-h(x).$

To prove the converse inequality, we assume that $g(x)\in\mathbb{R}.$ Obviously, if $g(x)=+\infty,$ then the equality holds trivially. Now, suppose that there exists $F\in\mathcal{P}(V)$ such that $g(x)\leq g_F(x)+\varepsilon.$ Hence,
\begin{align*}
g(x)-h(x) & \leq g_F(x)-h_F(x)+\varepsilon \\
& =g_{\nu(F,x)}(x)+\sum_{\omega\in{F\setminus \{\nu(F,x)\}}}h_\omega(x)+\varepsilon-h_{F}(x)\\
& =g_{\nu(F,x)}(x)-h_{\nu(F,x)}(x)+\varepsilon\\
& \leq\psi(x)+\varepsilon.
\end{align*}
Since $\varepsilon>0$ is arbitrary chosen, it follows that  $g(x)-h(x)\leq\psi(x).$ Therefore, the supremum function $\psi$ is stated as a difference of two tangentially convex functions $g,~h$  and the proof is completed. 
 \qed 
 \end{proof}
\begin{remark}
It is important to note that the proof of Proposition \ref{DTCproperty1} is similar to the proof of \cite[Proposition 2.2]{Correa}. However, it should be noted that in Proposition \ref{DTCproperty1}, the functions are assumed to be tangentially convex which is clearly  weaker than convexity assumption in \cite[Proposition 2.2]{Correa}.

\end{remark}
In the following, we remind the reader about the notion  of tangential subdifferential in the face of data uncertainty (for more details see \cite{p2}). Consider $f:\mathbb{R}^n\times V\rightarrow\mathbb{R}\cup\{+\infty\},$ then the tangential subdifferential of $f$ at $\bar{x}\in\mathbb{R}^n$  w.r.t. its first component is given by 
\begin{equation*}
\partial_T^xf(\bar{x},v):=\Big\{\xi\in\mathbb{R}^n~|~\langle \xi,d\rangle\leq f'_x(\bar{x},v;d),~ \forall d\in\mathbb{R}^n,~v\in V(\bar{x})
\Big\}.
\end{equation*}
The next theorem investigates the relationship between  F\'{e}chet subdifferential of $\psi$ and tangential subdifferential of the constructed functions $g,h.$
\begin{theorem} \label{l2}
Consider the max function $\psi(.)$ defined in (\ref{psi1}). Then the following inclusion is satisfied at each  $\bar{x}\in\mathbb{R}^n.$
 \begin{equation}\label{req5}
   \hat{\partial}\psi(\bar{x})\subseteq\co\bigcup_{v\in V(\bar{x})}\Big(\partial_T^xg(\bar{x},v)-\partial_T^xh(\bar{x},v)\Big).
 \end{equation}
 \end{theorem}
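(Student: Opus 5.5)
Take $\xi\in\hat{\partial}\psi(\bar{x})$ and first pass to the directional derivative: exactly as in the first half of Lemma \ref{lem2}, restricting the Fr\'{e}chet liminf to rays $\bar{x}+td$, $t\downarrow0$, and using Theorem \ref{l1} (which guarantees that $\psi'(\bar{x};d)$ exists), we get
\[
\langle\xi,d\rangle\leq\psi'(\bar{x};d)=\max_{v\in V(\bar{x})}\big(g'_x(\bar{x},v;d)-h'_x(\bar{x},v;d)\big),\qquad\forall d\in\mathbb{R}^n .
\]
The task is therefore to show that a linear minorant of this max of DTC functions of $d$ lies in the closed convex hull of $\bigcup_{v\in V(\bar x)}(\partial_T^xg(\bar{x},v)-\partial_T^xh(\bar{x},v))$. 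Denote this set by $C$.

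Set $A_v:=\partial_T^xg(\bar{x},v)$ and $B_v:=\partial_T^xh(\bar{x},v)$. These are nonempty compact convex sets, with support functions $g'_x(\bar{x},v;\cdot)$ and $h'_x(\bar{x},v;\cdot)$ respectively (Lemma on boundedness and the support-function identity). Argue by contradiction and suppose $\xi\notin C$. The set $C$ is closed: its boundedness follows from $(A_2)$, because the Lipschitz constant $K$ bounds every element of $A_v$ and $B_v$ uniformly in $v$. Closedness then follows from compactness of $V(\bar{x})$ together with the semicontinuity hypotheses $(A_1)$ and $(A_4)$, which make $v\mapsto A_v$ and $v\mapsto B_v$ outer semicontinuous. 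So strict separation gives a direction $d$ with
\[
\langle\xi,d\rangle>\sup_{v\in V(\bar x)}\ \sup_{a\in A_v,\,b\in B_v}\langle a-b,d\rangle=\sup_{v\in V(\bar x)}\big(g'_x(\bar{x},v;d)+h'_x(\bar{x},v;-d)\big).
\]

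Next we must compare this with $g'_x(\bar x,v;d)-h'_x(\bar x,v;d)$. Sublinearity gives $h'_x(\bar x,v;d)+h'_x(\bar x,v;-d)\ge0$, i.e. $-h'_x(\bar x,v;d)\le h'_x(\bar x,v;-d)$. Hence the right-hand side above dominates $\psi'(\bar x;d)$, and we obtain $\langle\xi,d\rangle>\psi'(\bar{x};d)$, which contradicts the first display. This proves $\xi\in C$. The key identity used here is that the support function of $A_v-B_v$ at $d$ equals $g'_x(\bar x,v;d)+h'_x(\bar x,v;-d)$, which is always at least $g'_x-h'_x$ at $d$. So no exact DC representation is needed, only this one-sided inequality.

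The main obstacle is the closedness of $C$, i.e. the fact that the convex hull of the union is already closed. If the outer semicontinuity of $v\mapsto A_v-B_v$ cannot be extracted cleanly from $(A_1)$--$(A_4)$, I would fall back to the following. Carathéodory's theorem writes $\co$ of a union as combinations of at most $n+1$ points. Take limits of such representations: weights converge, indices $v_i$ converge in the compact $V(\bar x)$, and the points converge by the uniform bound from $(A_2)$. Then check that a limit $a$ of $a_k\in A_{v_k}$ satisfies $\langle a,d\rangle\le\limsup g'_x(\bar x,v_k;d)\le g'_x(\bar x,v;d)$ by u.s.c. in $(A_4)$. For $B_v$ the analogous step needs $h'_x(\bar x,\cdot;-d)$ u.s.c., which $(A_4)$ does not literally give. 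If that fails, I would work with the closure $\cl\co$ in the separation argument and note that the paper's $\co$ should be read as closed convex hull.
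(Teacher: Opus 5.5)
\textbf{The gap: $C$ need not be closed.} Your reduction to $\langle\xi,d\rangle\le\psi'(\bar x;d)$ is correct. So is the one-sided estimate $-h'_x(\bar x,v;d)\le h'_x(\bar x,v;-d)=\sigma_{-B_v}(d)$, where $\sigma_D$ denotes the support function of $D$. In spirit this is the paper's argument: it also bounds $-h'_x(\bar x,v;\cdot)$ by a linear term from $\partial_T^xh(\bar x,v)$ (its Fenchel--Young step with $\phi_h^v$). It then reads off the subdifferential of the resulting max of sublinear functions from \cite[Theorem 2]{p2}.

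Your strict separation rests on the closedness of $C$, and that is where the argument breaks. The l.s.c.\ part of $(A_4)$ makes $v\mapsto B_v$ lower semicontinuous, not outer semicontinuous. Moreover, $C$ really can fail to be closed under $(A_1)$--$(A_4)$. Take $\mathbb{R}^2$, $V=[0,1]$, $g(x,v)=(1-v)x_1$, and $h(x,v)=v\big(1-e^{-\max\{-x_2,0\}/v}\big)$ for $v>0$ with $h(x,0)=0$. Then $(A_1)$--$(A_4)$ hold (with $K=1$) and $V(0)=[0,1]$. We get $A_v=\{(1-v,0)\}$, $B_v=\{0\}\times[-1,0]$ for $v>0$, and $B_0=\{0\}$. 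Hence $C=\big([0,1)\times[0,1]\big)\cup\{(1,0)\}$, which is not closed. So your fallback proves only $\hat{\partial}\psi(\bar x)\subseteq\cl C$, a strictly weaker statement. The plain $\co$ matters, because Theorem \ref{601} uses Carath\'eodory representations of elements of this set.

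\textbf{The missing idea: linearize $h$ by one well-behaved selection before separating.} First, $v\mapsto\sigma_{B_v}(d)=h'_x(\bar x,v;d)$ is l.s.c.\ for each $d$, and $B_v\subseteq K\mathbb{B}$ uniformly. Together these make $v\mapsto B_v$ lower semicontinuous on the compact set $V(\bar x)$. To see this, suppose $b_0\in B_{v_0}$, $v_k\to v_0$ and $d_{B_{v_k}}(b_0)\ge\varepsilon$. Separate with unit vectors $d_k\to d$, and use the uniform Lipschitz bound on the support functions to contradict l.s.c.\ at $d$.

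Michael's selection theorem then gives a continuous selection $b(v)\in B_v$ on $V(\bar x)$. The set $E:=\bigcup_{v\in V(\bar x)}\big(A_v-b(v)\big)$ is compact, by outer semicontinuity of $v\mapsto A_v$ (from the u.s.c.\ part of $(A_4)$) together with continuity of $b$. Hence $\co E\subseteq C$ is compact, and
\[
\langle\xi,d\rangle\le\psi'(\bar x;d)\le\max_{v\in V(\bar x)}\big(g'_x(\bar x,v;d)-\langle b(v),d\rangle\big)=\sigma_{\co E}(d)\quad\mbox{for all } d.
\]
Your separation argument, applied to $\co E$ instead of $C$, now gives $\xi\in\co E\subseteq C$.

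The same repair is needed to make the paper's own step rigorous. There $\phi_h^v$ is an arbitrary subgradient at one fixed $\bar d$, with no regularity in $v$. So the semicontinuity hypotheses (of type $(A_1)$, $(A_4)$) needed to apply \cite[Theorem 2]{p2} to $g(\cdot,v)-\langle\phi_h^v,\cdot\rangle$ are not actually verified; a continuous selection supplies them.
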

 \begin{proof}
Let $\xi\in\hat{\partial}\psi(\bar{x}),$ then  
 $\underset{x\rightarrow \bar{x}}{\liminf}~\frac{\psi(x)-\psi(\bar{x})-\langle\xi,x-\bar{x}\rangle}{||x-\bar{x}||}\geq0.
$
Due to Theorem \ref{l1}, $\psi$ is directional differentiable at $\bar{x}.$ Thus taking an arbitrary $\bar{d}\in\mathbb{R}^n\setminus\{0\}$ and considering the sequence $t_k\downarrow0,$ implies that 
\[ 
 \langle\xi,\frac{\bar{d}}{||\bar{d}||}\rangle\leq\underset{k\rightarrow\infty}{\lim}\frac{\psi(\bar{x}+t_k\bar{d})-\psi(\bar{x})}{t_k||\bar{d}||}=\frac{1}{||\bar{d}||}\psi'(\bar{x};\bar{d}).
 \]
The above implies that 
\[
\langle\xi,\bar{d}\rangle\leq\max_{v\in V(\bar{x})}\Big(g'_x(\bar{x},v;\bar{d})-h'_x(\bar{x},v;\bar{d})\Big),
\] 
where the right inequality is due to (\ref{reql1}). 
 For simplicity, define $\tilde{h}(v,\bar{d}):=h'_x(\bar{x},v;\bar{d}).$ Now by using Fenchel-Young equality for an arbitrary $\phi_h^v\in\partial \tilde{h}(v,\bar{d}),$ we obtain
\[
\langle\xi,\bar{d}\rangle\leq\max_{v\in V(\bar{x})}\Big(g'_x(\bar{x},v;\bar{d})-\langle\phi_h^v,\bar{d}\rangle+\tilde{h}^*(\phi_h^v,v)\Big).
\]  
Hence by \cite[Theorem 2]{p2}, it follows that
 \begin{equation}\label{eq500}
 \xi\in\co\underset{v\in V(\bar{x})}{\bigcup}\Big(\partial_T^xg(\bar{x},v)-\phi_h^v\Big)\in\co\underset{v\in V(\bar{x})}{\bigcup}\Big(\partial_T^xg(\bar{x},v)-\partial \tilde{h}(v,\bar{d})\Big).
 \end{equation}
 It is easy to see that for each $\bar{d}\in\mathbb{R}^n,~ \partial \tilde{h}(v,\bar{d})\subseteq\partial_T^xh(\bar{x},v),$ which implies (\ref{req5}) and completes the proof.
\qed
\end{proof}

In the following, we need to verify two auxiliary lemmas that imply  some important properties needed in the investigation of optimality conditions.
\begin{lemma}\label{00631}
There exist a neighbourhood $N(\bar{x})$ of $\bar{x}\in\mathbb{R}^n$ and a positive scalar $K$ such that for each $x\in N(\bar{x}):$ 
\begin{equation*}
 \underset{v\in V}{\bigcup}\partial_T^x(g-h)(x,v)\subseteq 2K\mathbb{B}.
 \end{equation*}
\end{lemma}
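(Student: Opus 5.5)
The plan is to use assumption $(A_2)$ directly. Fix $\bar{x}$ and take from $(A_2)$ an open neighbourhood $N(\bar{x})$ and a constant $K>0$ such that for all $y,z\in N(\bar{x})$ and all $v\in V$,
\[
|g(y,v)-g(z,v)|\leq K\|y-z\| \quad\text{and}\quad |h(y,v)-h(z,v)|\leq K\|y-z\|.
\]
The triangle inequality then shows that $(g-h)(\cdot,v)$ is Lipschitz on $N(\bar{x})$ with constant $2K$, uniformly in $v\in V$. By $(A_3)$, $g-h$ is tangentially convex in $x$, since the class is closed under sums and the difference of directional derivatives exists and is finite. So $(g-h)'_x(x,v;d)=g'_x(x,v;d)-h'_x(x,v;d)$ is well defined for each $x\in N(\bar{x})$ and $v\in V$.

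The key estimate is a bound on the directional derivative. For $x\in N(\bar{x})$, which is open, and $t>0$ small, the point $x+td$ lies in $N(\bar{x})$. Hence
\[
\frac{|(g-h)(x+td,v)-(g-h)(x,v)|}{t}\leq 2K\|d\|,
\]
and letting $t\downarrow 0$ gives $|(g-h)'_x(x,v;d)|\leq 2K\|d\|$ for all $d\in\mathbb{R}^n$ and $v\in V$.

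Next take any $\xi\in\partial_T^x(g-h)(x,v)$. By definition $\langle\xi,d\rangle\leq (g-h)'_x(x,v;d)\leq 2K\|d\|$ for every $d$. If $\xi\neq 0$, choose $d=\xi/\|\xi\|$; this yields $\|\xi\|\leq 2K$, so $\xi\in 2K\mathbb{B}$. Since $K$ and $N(\bar{x})$ do not depend on $v$, taking the union over $v\in V$ gives the claimed inclusion.

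The only delicate point is the uniformity in $v$, and $(A_2)$ supplies it directly. A minor technicality also needs attention: the definition of $\partial_T^x$ refers to $v\in V(\bar{x})$. I would read it pointwise in $v$, in which case the bound above holds for every $v\in V$, and the union is handled regardless of how the index set is interpreted. There is no real obstacle beyond this bookkeeping.
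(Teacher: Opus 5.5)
Your argument is correct, and it reaches the bound by a slightly different route from the paper. One side remark in it is false but not needed.

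\textbf{The false remark.} Tangentially convex functions are closed under sums and positive multiples, not under differences. So $(A_3)$ does not make $g-h$ tangentially convex in $x$. For example, $g\equiv 0$ and $h(x,v)=|x|$ on $\mathbb{R}$ give $(g-h)'_x(0,v;d)=-|d|$, which is concave. This does no harm, because you never use convexity of $d\mapsto (g-h)'_x(x,v;d)$. You only need $(g-h)'_x(x,v;d)=g'_x(x,v;d)-h'_x(x,v;d)$ to exist and be finite, which $(A_3)$ gives. Read $\partial_T^x(g-h)(x,v)$ as $\{\xi:\langle\xi,d\rangle\le g'_x(x,v;d)-h'_x(x,v;d)\ \forall d\}$. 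This set may be empty, in which case the inclusion is trivial. Just delete that sentence.

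\textbf{Comparison with the paper.} The paper bounds the two pieces separately, $\partial_T^x g(x,v)\subseteq K\mathbb{B}$ and $-\partial_T^x h(x,v)\subseteq K\mathbb{B}$, implicitly using your Lipschitz-to-directional-derivative estimate with constant $K$. It then appeals to $\partial_T^x(g-h)=\partial_T^x g-\partial_T^x h$.

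That identity is false as an equality: the example above gives $\emptyset$ on the left and $\{0\}-[-1,1]=[-1,1]$ on the right. Only the inclusion $\subseteq$ holds. Indeed, $\xi+\eta\in\partial_T^x g(x,v)$ whenever $\xi\in\partial_T^x(g-h)(x,v)$ and $\eta\in\partial_T^x h(x,v)$, and the latter set is nonempty. The inclusion is all the paper needs.

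Your route needs no subdifferential calculus and is more self-contained. The paper's route gives a stronger by-product: the bound $\partial_T^x g(x,v)-\partial_T^x h(x,v)\subseteq 2K\mathbb{B}$ on the Minkowski difference. That is the form actually invoked in the proof of Theorem~\ref{601}, where $\xi_i^k\in\partial_T^xg(x^k,v_i^k)-\partial_T^xh(x^k,v_i^k)$ must be bounded. Your estimate controls only the smaller set $\partial_T^x(g-h)(x,v)$. Running your argument on $g$ and $h$ separately recovers the stronger statement immediately.
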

\begin{proof}
Due to assumption $A_2,$ one can easily find a neighbourhood $N(\bar{x})$ of $\bar{x}\in\mathbb{R}^n$ and a positive scalar $L$ such that for each $x\in N(\bar{x}),~\underset{v\in V}{\bigcup}\partial_T^xg(x,v)\subseteq K\mathbb{B}$ and $\underset{v\in V}{\bigcup}-\partial_T^xh(x,v)\subseteq K\mathbb{B}.$ Therefore, by the equality  $\partial_T^x(g-h)(\bar{x},\bar{v})=\partial_T^xg(\bar{x},\bar{v})-\partial_T^xh(\bar{x},\bar{v}),$ the proof is straightforward.
\qed
\end{proof}
\begin{lemma}\label{00601}
For a fixed $d\in\mathbb{R}^n,$ the mapping $(\bar{x},\bar{v})\mapsto(g'_x-h'_x)(\bar{x},\bar{v};d)$ is u.s.c.  at each $(\bar{x},\bar{v})\in\mathbb{R}^n\times V(\bar{x}).$
\end{lemma}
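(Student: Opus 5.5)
Fix $d\in\mathbb{R}^n$ and split the map as
\[
(x,v)\mapsto g'_x(x,v;d) + \bigl(-h'_x(x,v;d)\bigr).
\]
A sum of two u.s.c.\ functions is u.s.c. It is therefore enough to show that $(x,v)\mapsto g'_x(x,v;d)$ is jointly u.s.c.\ and that $(x,v)\mapsto h'_x(x,v;d)$ is jointly l.s.c. The argument for $h$ is the mirror image of the one for $g$: swap $\limsup$ for $\liminf$ and reverse all inequalities. So I will only describe the $g$ part. Assumption $(A_4)$ gives upper semicontinuity in $v$ alone with $x$ frozen. The real work is to upgrade this to joint upper semicontinuity, using the uniform Lipschitz property $(A_2)$ and the tangential convexity $(A_3)$.

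\textbf{Step 1 (derivatives are Lipschitz in $d$).} Take a sequence $(x_k,v_k)\to(\bar{x},\bar{v})$ with $v_k\in V$. By $(A_2)$, on the neighbourhood $N(\bar{x})$, for every $v\in V$ and $x\in N(\bar{x})$,
\[
|g'_x(x,v;d)-g'_x(x,v;d')|\le K\|d-d'\| \quad\text{and}\quad |g'_x(x,v;d)|\le K\|d\|.
\]
So the values $g'_x(x_k,v_k;d)$ are uniformly bounded and we may pass to a subsequence realizing the $\limsup$.

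\textbf{Step 2 (compare base points).} Write the difference quotient at $x_k$:
\[
\frac{g(x_k+td,v_k)-g(x_k,v_k)}{t}.
\]
I would compare it with the quotient at $\bar{x}$, namely $\frac{g(\bar{x}+td,v_k)-g(\bar{x},v_k)}{t}$. The error this introduces is controlled by a Lipschitz-type estimate in $x$, uniform in $v$. The natural device is a mean-value or Lebourg-type inequality for the function $x\mapsto g(x,v_k)$, which is Lipschitz by $(A_2)$. This should bound $g'_x(x_k,v_k;d)$ above by $g'_x(y_k,v_k;d)$ evaluated at suitable points $y_k\to\bar{x}$, plus a term $o(1)$. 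Taking $\limsup$ and then applying $(A_4)$ at $\bar{x}$ would give
\[
\limsup_{k\to\infty} g'_x(x_k,v_k;d)\le g'_x(\bar{x},\bar{v};d).
\]

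\textbf{Main obstacle.} Step 2 is the hard step. Directional derivatives of merely Lipschitz, tangentially convex functions need not be u.s.c.\ in $x$; this fails even for a single DC function. So some regularity in $x$ is genuinely needed. The cleanest route I would try first is to read $(A_1)$ and $(A_4)$ as the joint semicontinuity hypotheses of \cite[Theorem 1]{p2}, which the paper has already invoked for $g-h$ in Theorem~\ref{l1}. Under that reading the conclusion follows directly from the semicontinuity statement inside that theorem's hypotheses, applied to $g$ and to $-h$ separately.

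If that reading is not available, the fallback is to bound the upper Dini derivative through the Clarke directional derivative $g^\circ_x(x,v;d)$, which is jointly u.s.c.\ in $x$ by $(A_2)$. I would then use the fact that the tangential convexity in $(A_3)$ makes $g'_x\le g^\circ_x$. The missing link is equality $g'_x(\bar{x},\bar{v};d)=g^\circ_x(\bar{x},\bar{v};d)$ at the limit point, i.e.\ regularity at $(\bar{x},\bar{v})$. I would flag this as the likely implicit assumption needed to close the argument.
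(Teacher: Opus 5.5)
You have located the obstruction correctly, but it is not a missing device. Under $(A_1)$--$(A_5)$ the lemma is false, so no argument can complete Step~2. Your own remark that upper semicontinuity fails for a single DC function already refutes the lemma, not just a proof technique.

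\textbf{Counterexample for the $h$-half.} Take $n=q=1$, $V=\{0\}$, $g\equiv 0$ and $h(x,v)=|x|$. All of $(A_1)$--$(A_5)$ hold, and $V(x)=V$ for every $x$. Yet for $d=1$ one has $(g'_x-h'_x)(0,0;1)=-1$, while $(g'_x-h'_x)(-\tfrac1k,0;1)=+1$ for every $k$. This is exactly the $h$-half of your reduction failing. It also shows that the $h$-half is not the mirror image of the $g$-half. Directional derivatives of convex functions are \emph{upper} semicontinuous in $x$, which is the wrong direction for $h$ once the minus sign is applied.

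\textbf{Counterexample for the $g$-half.} This fails too, even with smooth data and a regular limit point. Let $V=[0,1]$ and $h\equiv 0$. Set $g(x,v)=v\,\phi(x/v)$ for $v>0$ and $g(x,0)=0$. Here $\phi$ is smooth and nondecreasing, vanishes on $(-\infty,1]$, is constant on $[2,\infty)$, and has $\phi'(3/2)=1$. Then $(A_1)$--$(A_4)$ hold; for $(A_4)$, note that for fixed $x$ one has $\phi'(x/v)=0$ for all small $v>0$. Every $g(\cdot,v)$ is $C^\infty$ and $0\in V(0)=V$. But $g'_x(\tfrac{3}{2k},\tfrac1k;1)=\phi'(3/2)=1>0=g'_x(0,0;1)$.

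\textbf{Your two closures.} Neither proves the stated lemma.
\begin{itemize}
\item Reading $(A_4)$ as a joint hypothesis in $(x,v)$ simply assumes the conclusion, since $(A_4)$ as written freezes $x$.
\item The Clarke route needs more than regularity at $(\bar x,\bar v)$. Assumption $(A_2)$ makes $g^\circ_x(\cdot,v;d)$ u.s.c.\ in $x$ only for frozen $v$. In the second example $g^\circ_x=g'_x$ is not u.s.c.\ in $(x,v)$, although every $g(\cdot,v)$ is regular.
\item For $h$, the mirrored requirement is regularity of $-h$ at $\bar x$. For convex $h$ this means differentiability at $\bar x$, which excludes precisely the first example.
\end{itemize}
Any salvageable version must impose a hypothesis that is joint in $(x,v)$. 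For example, assume that $(x,v)\mapsto g'_x(x,v;d)$ is u.s.c.\ and $(x,v)\mapsto h'_x(x,v;d)$ is l.s.c. The latter holds if each $h(\cdot,v)$ is $C^1$ with $\nabla_x h$ jointly continuous.

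\textbf{The paper's proof.} It does not supply the missing idea either. Its separation detour leads back to the inequality $\lim_{k'}(g'_x-h'_x)(x^{k'},v^{k'};\tilde\omega)\le(g'_x-h'_x)(\bar x,\bar v;\tilde\omega)$, which is the lemma itself in the direction $\tilde\omega$. That inequality is obtained by extracting from the bounded double sequence $f_{k,s}$ a ``double subsequence'' with a double limit, so that Apostol's interchange theorem applies. Boundedness gives no such thing. In the first example with $x^k=-1/k$ and $\tilde\omega=1$, one computes $f_{k,s}=1$ if $kt_s\le 1$ and $f_{k,s}=2/(kt_s)-1$ otherwise. Hence $\lim_k\lim_s f_{k,s}=1\neq -1=\lim_s\lim_k f_{k,s}$, and no restriction to infinite index sets has a double limit.
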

\begin{proof}
Assume that for each $k\in \mathbb{N}$ a sequence $\{(x^k,v^k)\}$ converges to $(\bar{x},\bar{v})\in\mathbb{R}^n\times V(\bar{x}).$  Hence we must prove that 
\[
\underset{k\rightarrow\infty}{\limsup}~ (g'_x-h'_x)(x^k,v^k)\leq (g'_x-h'_x)(\bar{x},\bar{v}).
\]
Obviously, it is equivalent to show the u.s.c. property of $\partial_T^x(-h)(\bar{x},\bar{v}).$
Suppose on the contrary that there exist some positive scalar $\varepsilon,$ together with the sequences $(x^k,v^k)\rightarrow(\bar{x},\bar{v})$
and $\eta^k\in\partial_T^x(g-h)(x^k,v^k)$ such that for each $k\in \mathbb{N},~\eta^k\notin\partial_T^x(g-h)(\bar{x},\bar{v})+\varepsilon\mathbb{B}.$
Applying the convex separation theorem, one can easily find a sequence of nonzero vectors $\omega^k\in\mathbb{R}^n$ such that
\[
\langle\eta^k,\omega^k\rangle >(g'_x-h'_x)(\bar{x},\bar{v};\omega^k)+\varepsilon||\omega^k||.
\]
Defining $\tilde{\omega}^k:=\frac{\omega^k}{||\omega^k||},$ and using the positively homogenous property of $(g'_x-h'_x)(\bar{x},\bar{v};.),$ we get for each $k,$
\begin{eqnarray}\label{eq30'}
\langle\eta^k,\tilde{\omega}^k\rangle> (g'_x-h'_x)(\bar{x},\bar{v};\tilde{\omega}^k)+\varepsilon.
\end{eqnarray}
The boundedness of $\{\eta^k\}$ (due to Lemma \ref{00631}) and $\{\tilde{\omega}^k\}$  allows us to assume without loss of generality that $\eta^k\rightarrow\eta$ and $\tilde{\omega}^k\rightarrow\tilde{\omega}\neq0.$
Thus by passing to the limit in (\ref{eq30'}), we get
\begin{eqnarray}\label{47}
\langle\eta,\tilde{\omega}\rangle\geq (g'_x-h'_x)(\bar{x},\bar{v};\tilde{\omega})+\varepsilon>(g'_x-h'_x)(\bar{x},\bar{v};\tilde{\omega}).
\end{eqnarray}
On the other hand, since $\eta^k\in\partial_T^x(g-h)(x^k,v^k)$, one has for each $k,$
\begin{equation}\label{470}
  \langle\eta^k,\tilde{\omega}\rangle\leq (g'_x-h'_x)(x^k,v^k;\tilde{\omega}).
\end{equation}
Next let us show that for a subsequence $\{(x^{k'},v^{k'}\}$ of $\{(x^k,v^k)\},$ we have
\[\lim_{k'\rightarrow+\infty}(g'_x-h'_x)(x^{k'},v^{k'};\tilde{\omega})\leq (g'_x-h'_x)(\bar{x},\bar{v};\tilde{\omega}).\]
Take an arbitrary sequence $t_s\downarrow0$ and consider the double sequence $f_{k,s}$ defined by
\[f_{k,s}:=\frac{(g-h)(x^k+t_s\tilde{\omega},v^k)-(g-h)(x^k,v^k)}{t_s},\quad k,s\in\mathbb{N}.\]
Using assumption $A_2$ together with the fact that $(x^k,v^k)\rightarrow(\bar{x},\bar{v})$ and $t_s\downarrow0,$ one can obtain for sufficiently large $k$ and $s:$
\[|f_{k,s}|=\frac{|(g-h)(x^k+t_s\tilde{\omega},v^k)-(g-h)(x^k,v^k)|}{t_s}\leq K||\tilde{\omega}||.\]
Hence there is a double subsequence $\{k', s'\}\subseteq\mathbb{N}\times\mathbb{N}$ satisfying
\[\lim_{(k',s')\rightarrow+\infty}f_{k',s'}=\bar{f}\in\mathbb{R}.\]
On the other hand, due to assumption $A_1$  for a fixed $s'\in\mathbb{N},$ it follows that
\[\lim_{k'\rightarrow+\infty}\frac{(g-h)(x^{k'}+t_{s'}\tilde{\omega},v^{k'})-(g-h)(x^{k'},v^{k'})}{t_{s'}}
\leq\frac{(g-h)(\bar{x}+t_{s'}\tilde{\omega},\bar{v})-(g-h)(\bar{x},\bar{v})}{t_{s'}}.\]
Moreover, by the tangential convexity of $g$ and $h$ near $\bar{x}$ for a fixed $k'\in\mathbb{N},$ we obtain
\[\lim_{s'\rightarrow+\infty}\frac{(g-h)(x^{k'}+t_{s'}\tilde{\omega},v^{k'})-(g-h)(x^{k'},v^{k'})}{t_{s'}}
=(g'_x-h'_x)(x^{k'},v^{k'};\tilde{\omega}).\]
Applying now the well-known theorem about double and iterated limits of double sequences \cite[Theorem 8.39 ]{Apostol}, we deduce that
\[\begin{array}{ll}
\bar{f}=\underset{(k',s')\rightarrow+\infty}{\lim}f_{k',s'}&= 
\underset{k'\rightarrow+\infty}{\lim}\underset{s'\rightarrow+\infty}{\lim}
\frac{(g-h)(x^{k'}+t_{s'}\tilde{\omega},v^{k'})-(g-h)(x^{k'},v^{k'})}{t_{s'}}\\
 &  =\underset{s'\rightarrow+\infty}{\lim}\underset{k'\rightarrow+\infty}{\lim}\frac{(g-h)(x^{k'}+t_{s'}\tilde{\omega},v^{k'})-(g-h)(x^{k'},v^{k'})}{t_{s'}}\\
 &\leq \underset{s'\rightarrow+\infty}{\lim}\frac{(g-h)(\bar{x}+t_{s'}\tilde{\omega},\bar{v})-(g-h)(\bar{x},\bar{v})}{t_{s'}}\\
& =(g'_x-h'_x)(\bar{x},\bar{v};\tilde{\omega}).
\end{array}\]
Taking limit as $k'\rightarrow+\infty$ in (\ref{470}), gives us
$
\langle\eta,\tilde{\omega}\rangle\leq (g'_x-h'_x)(\bar{x},\bar{v};\tilde{\omega})
$
which contradicts (\ref{47}). Hence $\partial_T^x(g-h)(\bar{x},\bar{v})$ is u.s.c. that implies the u.s.c. property for the mapping $(\bar{x},\bar{v})\mapsto(g'_x-h'_x)(\bar{x},\bar{v};d)$ and completes the proof of lemma.
 \qed
\end{proof}
The last theorem of this section provides the interrelation of the limiting subdifferential of the max function (\ref{psi1}) and the tangential subdifferential of its constructed functions.
\begin{theorem}\label{601}
Consider the max function $\psi(.)$ defined in (\ref{psi1}). Then the limiting subdifferential $\partial_L\psi(.)$ at each $\bar{x}\in\mathbb{R}^n$ satisfies the following inclusion:
 \begin{equation}\label{boM}
   \partial_L\psi(\bar{x})\subseteq\co\bigcup_{v\in V(\bar{x})}\Big(\partial_T^xg(\bar{x},v)-\partial_T^xh(\bar{x},v)\Big).
 \end{equation}
\end{theorem}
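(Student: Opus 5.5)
The plan is to pass to the limit in the Fréchet inclusion of Theorem \ref{l2}, using the uniform bound of Lemma \ref{00631} for compactness and the upper semicontinuity of Lemma \ref{00601} to control the limit.

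Let $\xi\in\partial_L\psi(\bar{x})$. By (\ref{600}) there are sequences $x^k\overset{\psi}{\rightarrow}\bar{x}$ and $\xi^k\in\hat{\partial}\psi(x^k)$ with $\xi^k\to\xi$. Theorem \ref{l2}, applied at each $x^k$, gives
\[
\xi^k\in\co\bigcup_{v\in V(x^k)}\Big(\partial_T^xg(x^k,v)-\partial_T^xh(x^k,v)\Big).
\]
It is convenient to pass to support functions. The set on the right is bounded by Lemma \ref{00631}: for $k$ large, $x^k\in N(\bar{x})$, so it lies in $2K\mathbb{B}$. Its support function in a direction $d$ is
\[
\sigma_k(d)=\max_{v\in V(x^k)}\big(g'_x(x^k,v;d)-h'_x(x^k,v;d)\big).
\]
I will assume the identity $\partial_T^x(g-h)=\partial_T^xg-\partial_T^xh$, as in the proof of Lemma \ref{00631}, so that $(g'_x-h'_x)(x,v;\cdot)$ is the support function of this set. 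The maximum over $v$ is attained by compactness of $V(x^k)$, argued as below. Hence $\langle\xi^k,d\rangle\le\sigma_k(d)$ for every $d$.

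Now fix $d$ and choose $v^k\in V(x^k)$ attaining $\sigma_k(d)$. Since $V$ is compact, a subsequence satisfies $v^k\to\bar{v}$. The next step is to show $\bar{v}\in V(\bar{x})$, i.e.\ that the multifunction $V(\cdot)$ has closed graph. Assumption $(A_1)$ makes $g-h$ u.s.c., so
\[
(g-h)(\bar{x},\bar{v})\ge\limsup_k (g-h)(x^k,v^k)=\limsup_k\psi(x^k)=\psi(\bar{x}),
\]
where the last equality uses $x^k\overset{\psi}{\rightarrow}\bar{x}$; continuity of $\psi$ from $(A_2)$ would also suffice. The reverse inequality $(g-h)(\bar{x},\bar{v})\le\psi(\bar{x})$ holds by the definition of $\psi$. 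Hence $\bar{v}\in V(\bar{x})$. Lemma \ref{00601} then gives
\[
\langle\xi,d\rangle=\lim_k\langle\xi^k,d\rangle\le\limsup_k (g'_x-h'_x)(x^k,v^k;d)\le (g'_x-h'_x)(\bar{x},\bar{v};d)\le\max_{v\in V(\bar{x})}(g'_x-h'_x)(\bar{x},v;d).
\]

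It remains to convert this support-function inequality into the inclusion (\ref{boM}). Set
\[
C:=\bigcup_{v\in V(\bar{x})}\Big(\partial_T^xg(\bar{x},v)-\partial_T^xh(\bar{x},v)\Big).
\]
The right-hand side of the last display is the support function of $C$, and hence also of $\cl\co C$. The set $C$ is bounded by Lemma \ref{00631}. If $C$ is also closed, then $\co C$ is compact, and a separation argument (Hörmander) shows that $\xi\in\co C$.

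Closedness of $C$ is the main obstacle. I would prove it by a sequential argument. Take $\zeta^j=a^j-b^j\to\zeta$ with $a^j\in\partial_T^xg(\bar{x},v^j)$, $b^j\in\partial_T^xh(\bar{x},v^j)$ and $v^j\in V(\bar{x})$. By boundedness, pass to subsequences with $a^j\to a$, $b^j\to b$, $v^j\to v$. Then $v\in V(\bar{x})$ by the closed-graph argument above. For the limits, $(A_4)$ gives $a\in\partial_T^xg(\bar{x},v)$ directly, since $\langle a^j,d\rangle\le g'_x(\bar{x},v^j;d)$ and $g'_x(\bar{x},\cdot\,;d)$ is u.s.c. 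For $b$, however, $h'_x(\bar{x},\cdot\,;d)$ is only l.s.c., which points the wrong way. I would therefore handle the difference as a whole, viewing $\zeta^j$ as an element of $\partial_T^x(g-h)(\bar{x},v^j)$ and applying Lemma \ref{00601} at the fixed point $\bar{x}$. If that fails, the fallback is to state the result with $\cl\co$, which coincides with $\co$ whenever $C$ is compact.
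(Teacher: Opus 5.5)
You have a genuine gap, and it cannot be closed under the stated hypotheses, because the statement itself is false. Your argument and the paper's both rest on passing to the limit through Lemma~\ref{00601}, i.e.\ on upper semicontinuity of $(x,v)\mapsto(g'_x-h'_x)(x,v;d)$, and that step fails. Assumptions $(A_1)$--$(A_4)$ control how the directional derivatives depend on $v$, but say nothing about how they depend on $x$.

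Take $n=1$, $V$ a singleton, $h\equiv0$ and $g(x)=x^2\sin(1/x)$ with $g(0)=0$. Then $g$ is differentiable everywhere with $|g'(x)|\le 2|x|+1$, so $(A_1)$--$(A_4)$ hold and $\psi=g$. However, $g'_x(x;1)=g'(x)$ equals $1$ at $x_k=1/((2k+1)\pi)\to0$, while $g'(0)=0$. Moreover, $\hat{\partial}\psi(x_k)=\{1\}$ and $\psi(x_k)\to\psi(0)$, so $1\in\partial_L\psi(0)$ (in fact $\partial_L\psi(0)=[-1,1]$). The right-hand side of (\ref{boM}), by contrast, is $\partial_Tg(0)-\partial_Th(0)=\{0\}$, so the inclusion fails.

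Apart from this, your route is a support-function repackaging of the paper's: you use H\"{o}rmander separation where the paper uses a Carath\'{e}odory decomposition with componentwise limits. Your remaining steps are sound, namely attainment of the maxima and the closed-graph argument giving $\bar v\in V(\bar x)$ from $x^k\overset{\psi}{\rightarrow}\bar x$.

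There is a second error, which can be repaired: the identity $\partial_T^x(g-h)=\partial_T^xg-\partial_T^xh$, with $g'_x-h'_x$ taken as the support function of the right-hand side. The paper's proof uses it in the same way. The support function of the Minkowski difference $\partial_T^xg(x,v)-\partial_T^xh(x,v)$ is $d\mapsto g'_x(x,v;d)+h'_x(x,v;-d)$. Also, $g'_x-h'_x$ need not even be convex: for $g\equiv0$, $h(x)=|x|$ it is $-|d|$, while $\partial_Tg(0)-\partial_Th(0)=[-1,1]$. This convex pair also violates Lemma~\ref{00601}, since $-h'(x_k;-1)=1>-1=-h'(0;-1)$ for $x_k\downarrow0$.

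Hence $\langle\xi^k,d\rangle\le\sigma_k(d)$ does not follow from Theorem~\ref{l2}. Obtain it instead directly from $\xi^k\in\hat{\partial}\psi(x^k)$ and Theorem~\ref{l1}. At the end, use only $-h'_x(\bar x,v;d)\le h'_x(\bar x,v;-d)$, which gives $\max_{v\in V(\bar x)}(g'_x-h'_x)(\bar x,v;d)\le\sigma_C(d)$, where $\sigma_C$ is the support function of your set $C$.

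With these corrections your scheme does prove (\ref{boM}) if you add the hypothesis that $(x,v)\mapsto g'_x(x,v;d)$ and $(x,v)\mapsto h'_x(x,v;d)$ are u.s.c.\ for every $d$. This added hypothesis holds, for example, when $g,h$ are jointly continuous and convex in $x$: then $g'_x(x,v;d)=\inf_{t>0}[g(x+td,v)-g(x,v)]/t$ is an infimum of continuous functions. The repaired argument runs as follows:
\begin{itemize}
\item bound $\langle\xi^k,d\rangle$ by $\max_{v\in V(x^k)}[g'_x(x^k,v;d)+h'_x(x^k,v;-d)]$;
\item pass to the limit along maximizers exactly as you do;
\item observe that the same u.s.c.\ in $v$ makes $C$ closed, so $\co C$ is compact and equals $\cl\co C$.
\end{itemize}
The last point also removes the closedness obstacle you correctly flagged for the $h$-part.
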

\begin{proof}
Pick $\xi\in\partial_L\psi(\bar{x}),$ then there exist sequences  $\xi^k\rightarrow\xi$ and $x^k\rightarrow\bar{x}$ such that $\xi^k\in\hat{\partial}\psi(x^k).$ By (\ref{req5}), it immediately follows that
\[
\xi^k\in\co\bigcup_{v\in V(x^k)}\Big(\partial_T^xg(x^k,v)-\partial_T^xh(x^k,v)\Big).
\]
For each $k,$ consider arbitrary $\xi_i^k\in\partial_T^xg(x^k,v_i^k)-\partial_T^xh(x^k,v_i^k)$ with $v_i^k\in V(x^k)\subseteq V$ such that $\xi^k=\sum_{i=1}^{n+1}\lambda_i^k\xi_i^k,~\sum_{i=1}^{n+1}\lambda_i^k=1$ and $\lambda_i^k\geq0.$ 
In the following, we show that for each $i,~v_i^k\rightarrow v_i\in V(\bar{x})\subseteq V.$
Obviously, $\{v_i^k\}_{k=1}^{\infty}\subseteq V,$ where $V$ is a compact subset of $\mathbb{R}^n.$
Passing to a subsequence, one can assume that for each $i:
\
\underset{k\longrightarrow\infty}{\lim}~v_i^k=v_i\in V.
$

Next, we prove that $v_i\in V(\bar{x}).$ To this end, consider an arbitrary $\hat{v}\in V$ and fix $i.$ Assumption $A_2,$ implies that the mapping $x\mapsto(g-h)(x,\hat{v})$ is continuous. Further, due to assumption $A_1$ and $v_i^k\in V(x^k)$ for all $k,$ we get 
\begin{align*}
 (g-h)(\bar{x},\hat{v})=\underset{k\rightarrow\infty}{\lim}(g-h)(x^k,\hat{v}) & \leq \underset{k\rightarrow\infty}{\limsup}~(g-h)(x^k,v_i^k)\\
 &\leq (g-h)~(\bar{x},v_i).
\end{align*}
Since $\hat{v}$ was chosen arbitrarily, the above inequalities imply that $v_i\in V(\bar{x}),$ and hence $\psi(\bar{x})=g(\bar{x},v_i)-h(\bar{x},v_i).$
On the other hand, Lemma \ref{00631} yields the boundedness property of $\underset{k\in\mathbb{N}}{\bigcup}\Big(\partial_T^xg(x^k,v_i^k)-\partial_T^xh(x^k,v_i^k)\Big).$ Hence by passing to a subsequence, it follows that $\xi_i^k\rightarrow\xi_i.$ We now show that $\xi_i\in\partial_T^xg(\bar{x},v_i)-\partial_T^xh(\bar{x},v_i).$ Since $\xi_i^k\in\partial_T^xg(x^k,v_i^k)-\partial_T^xh(x^k,v_i^k),$  for each $d\in\mathbb{R}^n,$ we have
\[
\langle\xi_i^k,d\rangle\leq g'_x(x^k,v_i^k;d)-h'_x(x^k,v_i^k;d).
\]
Then by passing to the limit and using Lemma \ref{00601}, we obtain: 
\begin{align*}
\langle\xi,d\rangle &\leq\underset{k\rightarrow\infty}{\limsup}~ (g'_x(x^k,v_i^k;d)-h'_x(x^k,v_i^k;d))\\
& \leq g'_x(\bar{x},v_i;d)-h'_x(\bar{x},v_i;d).
 \end{align*}
 Therefore, $\xi_i\in\partial_T^xg(\bar{x},v_i)-\partial_T^xh(\bar{x},v_i).$ 
Furthermore, it is easily seen that $\lambda_i^k\rightarrow\lambda_i$ with $\sum_{i=1}^{n+1}\lambda_i=1,$ due to the boundedness of $\{\lambda_i^k\}.$ 
Finally, putting all the above arguments together, we conclude that
 \begin{align*}
\xi & = \underset{k\rightarrow\infty}{\lim}\xi^k\\
& =\underset{k\rightarrow\infty}{\lim}\sum_{i=1}^{n+1}\lambda_i^k\xi_i^k\\
&=\sum_{i=1}^{n+1}\lambda_i\xi_i\\
&\in\co\bigcup_{v\in V(\bar{x})}\partial_T^xg(\bar{x},v)-\partial_T^xh(\bar{x},v),
 \end{align*}
which completes the proof.
\qed
\end{proof}
\begin{corollary}\label{cor01}
Consider the maximum function $\psi$ in (\ref{psi1}) in a special case where $h(x,v)=0,$ i. e., $\psi(x):=\underset{v\in V}{\max}~ g(x,v).$ Further, assume that there exists a neighbourhood $N(\bar{x})$ of $\bar{x}$ such that for each $x\in N(\bar{x})$ and $v\in V,~g$ satisfies assumptions $A_1-A_4.$ Then
\begin{equation}\label{reg1}
\partial_L\psi(\bar{x})=\hat{\partial}\psi(\bar{x})=\partial_T\psi(\bar{x})=\co\underset{v\in V(\bar{x})}{\bigcup}\partial_T^xg(x,v),
\end{equation}
which means the regularity of $\psi$ at $\bar{x}$ in the sense of Mordukhovich.
\end{corollary}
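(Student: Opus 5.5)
We prove the chain of equalities in (\ref{reg1}) by establishing the cyclic inclusions
\[
\co\bigcup_{v\in V(\bar{x})}\partial_T^xg(\bar{x},v)\subseteq\partial_T\psi(\bar{x})\subseteq\hat{\partial}\psi(\bar{x})\subseteq\partial_L\psi(\bar{x})\subseteq\co\bigcup_{v\in V(\bar{x})}\partial_T^xg(\bar{x},v).
\]
With $h\equiv0$, assumptions $A_1$–$A_4$ are inherited by $g-h=g$. Moreover, $\partial_T^xh(\bar{x},v)=\{0\}$, since $h'_x(\bar{x},v;\cdot)\equiv0$.

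\emph{Last inclusion.} This is exactly Theorem \ref{601} specialized to $h=0$, since the right-hand side of (\ref{boM}) becomes $\co\bigcup_{v\in V(\bar{x})}\partial_T^xg(\bar{x},v)$.

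\emph{Third inclusion.} $\hat{\partial}\psi(\bar{x})\subseteq\partial_L\psi(\bar{x})$ is standard: take the constant sequence $x^k=\bar{x}$ in (\ref{600}).

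\emph{Second inclusion.} Theorem \ref{l1} gives $\psi'(\bar{x};d)=\max_{v\in V(\bar{x})}g'_x(\bar{x},v;d)$. A pointwise maximum of convex functions of $d$ is convex, and it is finite because $V(\bar{x})$ is compact (closed in $V$ by $A_1$, as in the proof of Theorem \ref{601}) and the maximum is attained by $A_4$. Hence $\psi$ is tangentially convex at $\bar{x}$. Next, $\psi$ is locally Lipschitz near $\bar{x}$, since by $A_2$ a max of uniformly Lipschitz functions is Lipschitz with the same constant $K$. It is also continuous, hence l.s.c. The converse part of Lemma \ref{lem2} then yields $\partial_T\psi(\bar{x})\subseteq\hat{\partial}\psi(\bar{x})$.

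\emph{First inclusion.} Since $\partial_T\psi(\bar{x})$ is convex, it suffices to show $\partial_T^xg(\bar{x},v)\subseteq\partial_T\psi(\bar{x})$ for each $v\in V(\bar{x})$. If $\xi\in\partial_T^xg(\bar{x},v)$, then for every $d$,
\[
\langle\xi,d\rangle\leq g'_x(\bar{x},v;d)\leq\max_{w\in V(\bar{x})}g'_x(\bar{x},w;d)=\psi'(\bar{x};d),
\]
which is exactly the membership $\xi\in\partial_T\psi(\bar{x})$.

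The only delicate point is confirming that $\psi'(\bar{x};\cdot)$ is genuinely finite and convex, so that $\partial_T\psi(\bar{x})$ is defined and Lemma \ref{lem2} applies. This rests on the attainment of the maximum over the compact set $V(\bar{x})$, which uses $A_1$ and $A_4$. Everything else is assembly of earlier results. Regularity in the sense of Mordukhovich is then just the equality $\partial_L\psi(\bar{x})=\hat{\partial}\psi(\bar{x})$.
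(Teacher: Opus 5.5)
\textbf{The gap.} Your chain of inclusions is the one the paper uses. The paper cites the external max-rule $\partial_T\psi(\bar x)=\co\bigcup_{v\in V(\bar x)}\partial_T^xg(\bar x,v)$, whereas you prove only the easy half, which is all the cycle needs. The three inclusions you argue yourself are fine.

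The gap is the step you delegate: the last inclusion $\partial_L\psi(\bar x)\subseteq\co\bigcup_{v\in V(\bar x)}\partial_T^xg(\bar x,v)$, taken from Theorem~\ref{601}. The limiting argument there rests on Lemma~\ref{00601}, i.e.\ on upper semicontinuity of $(x,v)\mapsto g'_x(x,v;d)$, and nothing in $A_1$--$A_4$ gives this. $A_4$ only controls the dependence on $v$ at a fixed base point. Tangential convexity, or even differentiability, of $g(\cdot,v)$ says nothing about how $g'_x(\cdot,v;d)$ behaves as the base point moves. In the proof of Lemma~\ref{00601} this shows up as the unjustified extraction of a double subsequence with a double limit, followed by an interchange of iterated limits.

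\textbf{The statement is false under its hypotheses.} Take $n=q=1$, $V=[0,1]$, $\bar x=0$, and $g(x,v)=x^2\sin(1/x)$ for $x\neq0$, $g(0,v)=0$.
\begin{itemize}
\item This $g$ is continuous, locally Lipschitz in $x$ uniformly in $v$, and differentiable in $x$, so $g'_x(x,v;\cdot)$ is linear.
\item It does not depend on $v$, so $A_1$--$A_4$ hold everywhere.
\item Here $\psi=g(\cdot,v)$ and $V(\bar x)=V$, so $\hat\partial\psi(\bar x)=\partial_T\psi(\bar x)=\co\bigcup_{v\in V(\bar x)}\partial_T^xg(\bar x,v)=\{0\}$.
\item However, $\psi'(x)=2x\sin(1/x)-\cos(1/x)$ equals $(-1)^{k+1}$ at $x_k=1/(k\pi)\to0$. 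Hence $\pm1\in\partial_L\psi(\bar x)$, and in fact $\partial_L\psi(\bar x)=[-1,1]$.
\end{itemize}
So Lemma~\ref{00601}, Theorem~\ref{601} and the claimed regularity all fail, and no argument can close your cycle without an additional hypothesis.

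\textbf{A possible repair.} A natural extra hypothesis is joint upper semicontinuity of $(x,v)\mapsto g'_x(x,v;d)$ at the points $(\bar x,v)$ with $v\in V(\bar x)$. This holds, for example, when each $g(\cdot,v)$ is convex and $g$ is jointly continuous, since then $g'_x(x,v;d)=\inf_{t>0}\,(g(x+td,v)-g(x,v))/t$ is an infimum of continuous functions.

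With this hypothesis the last inclusion follows directly:
\begin{itemize}
\item Let $\hat\partial\psi(x^k)\ni\xi^k\to\xi$ with $x^k\to\bar x$.
\item Lemma~\ref{lem2} and Theorem~\ref{l1} give $\langle\xi^k,d\rangle\le g'_x(x^k,v^k;d)$ for some $v^k\in V(x^k)$.
\item A subsequence has $v^k\to v\in V(\bar x)$, as in the proof of Theorem~\ref{601}.
\item The semicontinuity then yields $\langle\xi,d\rangle\le g'_x(\bar x,v;d)\le\psi'(\bar x;d)$, i.e.\ $\xi\in\partial_T\psi(\bar x)$, which closes your chain.
\end{itemize}
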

\begin{proof}
Using Theorem \ref{601}, it is easily seen that
\begin{equation}\label{reg2}
\partial_L\psi(\bar{x})\subseteq\co\underset{v\in V(\bar{x})}{\bigcup}\partial_T^xg(x,v).
\end{equation}
Then, due to \cite[theorem 2]{p2} and Lemma \ref{lem2}, we get
\begin{equation}\label{reg3} 
\co\underset{v\in V(\bar{x})}{\bigcup}\partial_T^xg(x,v)=\partial_T\psi(\bar{x})=\hat{\partial}\psi(\bar{x})=\partial_L\psi(\bar{x}).
\end{equation}
Finally, (\ref{reg2}) and (\ref{reg3}) imply (\ref{reg1}), and the proof is completed.
\qed
\end{proof}
\section{Constraint Qualifications }\label{CQ}
In this section, we focus  mainly on two  nonsmooth constraint qualifications in the face of data uncertainty based on the tangential subdifferential. Let us consider the following robust constraint system
\[
S:=\Big\{x\in\mathbb{R}^n\ |\ \psi_j(x):=\max_{v_j\in V_j}~g_j(x,v_j)\leq0,\ j\in J:=\{1,2,\ldots,m\}\Big\}.
\] 
Suppose that $\bar{x}$ is a feasible point of $S.$ Moreover, here and subsequently, we suppose that assumptions $A_1-A_4$ are also satisfied for the functions $g_j\ (j\in J).$ 
As usual in classical optimization, we also require to use the following linearized
cone at $\bar{x}:$
\begin{align*}
 G'(\bar{x})&:=\Big\{d\in\mathbb{R}^n\ |\ \psi'_j(\bar{x};d)\leq0,\ \forall j\in J(\bar{x})\Big\},
\end{align*}
where $J(\bar{x}):=\big\{j\in J~|~ \psi_j(\bar{x})=0\big\}$ is the index set of active constraints in $RP_S.$
In the following, it is important to pay our attention to introduce some new constraint qualifications in the framework of tangential subdifferential concept.
 We say that the generalized
 \begin{enumerate}
\item[$\bullet$]  Abadie constraint qualification (GACQ) holds at $\bar{x}$ if $G'(\bar{x})\subseteq T(\bar{x};S).$
\item[$\bullet$]  Error bound constraint qualification (GEBCQ) is satisfied at $\bar{x}$ if there exist positive scalars $\delta$ and $\sigma$ such that for all $x\in\mathbb{B}_{\delta}(\bar{x})\setminus S,$
\begin{equation}\label{700}
d_S(x)\leq\sigma\sum_{j\in J(\bar{x})}\psi_j^+(x),
\end{equation}
where $\psi_j^+(x):=\max\big\{\psi_j(x),0\big\},~j\in J(\bar{x}).$
\end{enumerate}
The first result of this section provides us with the relationship between the Fr\'{e}chet subdifferential of the distance function and the tangential subdifferential of the constructed functions of $\psi_j~(j\in J)$ under GEBCQ.
\begin{theorem}\label{th001}
Suppose that GEBCQ is satisfied at $\bar{x}\in S$ with some positive scalers $\delta,\sigma.$ Then the following assertion holds: 
\begin{equation}\label{equ02}
\hat{\partial}d_S(\bar{x})\subseteq\bigcup_{j\in J(\bar{x})}\co\Big(
\bigcup_{\begin{array}{l}
 \scriptstyle{\quad  \lambda_j\in[0,\sigma]}\\
   \scriptstyle{\quad v_j\in V_j(\bar{x})}\\
  \end{array}}\lambda_j\partial_T^xg_j(\bar{x},v_j)\Big).
\end{equation}
\end{theorem}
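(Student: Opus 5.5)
The plan is to linearize the error bound at $\bar x$ in the direction $d$, identify the resulting sublinear majorant of $\langle\xi,\cdot\rangle$, and read off its subdifferential. Write $C_j:=\bigcup_{v_j\in V_j(\bar x)}\partial_T^xg_j(\bar x,v_j)$. By Theorem \ref{l1} with $h\equiv 0$, and by \cite[Theorem 2]{p2} (equivalently Corollary \ref{cor01}),
\[
\psi_j'(\bar x;d)=\max_{v_j\in V_j(\bar x)}(g_j)'_x(\bar x,v_j;d)=\max_{\zeta\in \co C_j}\langle \zeta,d\rangle .
\]
Each $\co C_j$ is compact by Lemma \ref{00631}. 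The right-hand side of (\ref{equ02}) is $\bigcup_{j\in J(\bar x)}\co\big(\bigcup_{\lambda\in[0,\sigma]}\lambda C_j\big)=\bigcup_{j\in J(\bar x)}[0,\sigma]\cdot\co C_j$, whose $j$-th piece is exactly $\partial\varphi_j(0)$ for the sublinear function $\varphi_j(d):=\sigma\max\{\psi_j'(\bar x;d),0\}$.

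\textbf{Step 1.} Fix $\xi\in\hat\partial d_S(\bar x)$ and $d\in\mathbb{R}^n$. Since $d_S(\bar x)=0$, the Fr\'echet inequality along $x=\bar x+td$ gives $\langle\xi,d\rangle\le (d_S)^-(\bar x;d)$.

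\textbf{Step 2.} For small $t>0$, either $\bar x+td\in S$, in which case $d_S(\bar x+td)=0$, or GEBCQ (\ref{700}) applies. In both cases $d_S(\bar x+td)\le\sigma\sum_{j\in J(\bar x)}\psi_j^+(\bar x+td)$. Inactive constraints remain strictly negative near $\bar x$ by continuity (assumption $A_2$), so only $J(\bar x)$ contributes. Dividing by $t$ and using the existence of $\psi_j'(\bar x;d)$ yields
\[
\langle\xi,d\rangle\le\sigma\sum_{j\in J(\bar x)}\max\{\psi_j'(\bar x;d),0\}.
\]

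\textbf{Step 3 (main obstacle).} Step 2 controls $\xi$ by the \emph{sum} $\sum_j\varphi_j$, whereas (\ref{equ02}) requires a single index $j$ with $\xi\in\partial\varphi_j(0)$, that is, $\langle\xi,d\rangle\le\varphi_j(d)$ for all $d$ with the same $j$.

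The first thing I would try is to sharpen Step 2 to a bound by the maximum:
\[
\langle\xi,d\rangle\le\sigma\max_{j\in J(\bar x)}\max\{\psi_j'(\bar x;d),0\}.
\]
The natural route is to refine the GEBCQ estimate along rays, comparing $d_S(\bar x+td)$ with the largest violated active constraint. If that sharpening holds, I would apply the minimax theorem to the bilinear pairing on $\mathbb{B}\times\big(\bigcup_j[0,\sigma]\co C_j\big)$ to pass from this pointwise bound to membership in $\co\bigcup_j[0,\sigma]\co C_j$. A further argument would then be needed to land in one piece of the union. For that I would exploit that $\xi\in\hat N(\bar x;S)\cap\mathbb{B}$ (Fr\'echet subgradients of the distance function) and separate $\xi$ from each closed convex set $[0,\sigma]\co C_j$ with a direction $d_j$. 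I would then try to combine these directions into one $d$ violating the maximum bound.

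I expect this step to be the genuine difficulty. Without an extra argument of this kind, the linearization only delivers $\xi\in\sum_{j\in J(\bar x)}[0,\sigma]\co C_j$, not the union of the per-index hulls as (\ref{equ02}) states.

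\textbf{Step 4.} Once a single index $j$ is obtained with $\langle\xi,d\rangle\le\varphi_j(d)$ for all $d$, Remark \ref{lltc} identifies $\xi\in\partial\varphi_j(0)=[0,\sigma]\co C_j$. This set is exactly the $j$-th piece of the right-hand side of (\ref{equ02}), which gives the inclusion.
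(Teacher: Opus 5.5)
Your Step 3 cannot be completed, because the inclusion with $\bigcup_{j\in J(\bar{x})}$ on the right-hand side is false. Take $n=2$, $V_1=V_2=\{0\}$, $g_1(x,v)=x_1$, $g_2(x,v)=x_2$ and $\bar{x}=0$. Assumptions $A_1$--$A_4$ hold trivially, and $J(\bar{x})=\{1,2\}$. The feasible set is $S=\{x\,|\,x_1\leq0,\ x_2\leq0\}$, and $d_S(x)=\|(x_1^+,x_2^+)\|\leq\psi_1^+(x)+\psi_2^+(x)$, so GEBCQ holds with $\sigma=1$. Since $d_S$ is convex, $\hat{\partial}d_S(0)=N(0;S)\cap\mathbb{B}$ contains $\xi=(1/\sqrt{2},1/\sqrt{2})$. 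The right-hand side of (\ref{equ02}), however, is $\big([0,\sigma]\times\{0\}\big)\cup\big(\{0\}\times[0,\sigma]\big)$, which never contains $\xi$, for any $\sigma$.

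Your intermediate ``max-sharpening'' fails too. For $d=(1,1)$ you get $\langle\xi,d\rangle=\sqrt{2}>1=\sigma\max_{j}\max\{\psi_j'(\bar{x};d),0\}$. The reason is that $d_S(\bar{x}+td)=\sqrt{2}\,t$, while the largest single violation is only $t$. So the minimax/separation plan aims at a false target.

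The paper's own proof in fact only establishes the \emph{sum} version:
\[
\hat{\partial}d_S(\bar{x})\subseteq\sum_{j\in J(\bar{x})}\co\Big(\bigcup_{\lambda_j\in[0,\sigma],\ v_j\in V_j(\bar{x})}\lambda_j\partial_T^xg_j(\bar{x},v_j)\Big).
\]
Its chain of inclusions ends with $\sum_{j\in J(\bar{x})}$ in (\ref{00812}), and this is also the form used in Theorem \ref{th002}. The union in the displayed statement is a misprint.

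So drop Step 3 and conclude directly from Step 2. There you have $\langle\xi,d\rangle\leq\sum_{j\in J(\bar{x})}\varphi_j(d)$ for all $d$, with each $\varphi_j$ finite and sublinear. By the Moreau--Rockafellar sum rule,
\[
\xi\in\partial\Big(\sum_j\varphi_j\Big)(0)=\sum_j\partial\varphi_j(0)=\sum_j[0,\sigma]\co C_j,
\]
which is exactly the right-hand side of (\ref{00812}). With this ending your argument correctly proves the sum version, by a genuinely different route from the paper's.

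The paper argues as follows:
\begin{itemize}
\item $\sigma\sum_j\psi_j^+$ majorizes $d_S$ near $\bar{x}$ with equality at $\bar{x}$, so $\xi\in\sigma\hat{\partial}\big(\sum_j\psi_j^+\big)(\bar{x})$.
\item It passes to $\partial_L$ and applies the limiting sum rule for locally Lipschitz functions.
\item It computes $\partial_L\psi_j^+(\bar{x})=\co\big(\{0\}\cup\partial_T\psi_j(\bar{x})\big)$ via the regularity result of Corollary \ref{cor01}.
\end{itemize}
Your linearization needs only the directional differentiability from Theorem \ref{l1} and convex calculus of sublinear functions. It therefore avoids limiting subdifferentials and the regularity corollary entirely. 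The paper's chain is shorter once that machinery is in place; yours is more elementary and self-contained.
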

\begin{proof}
Take $\xi\in\hat{\partial}d_S(\bar{x}).$ Then for each $\varepsilon>0,$ one can find a positive real scalar $\delta'$ such that for all $x\in\mathbb{B}_{\delta'}(\bar{x})\setminus\{\bar{x}\},$
\[
-\varepsilon\leq\frac{d_S(x)-d_S(\bar{x})-\langle\xi,x-\bar{x}\rangle}{||x-\bar{x}||}.
\]
By taking $\hat{\delta}=\min\{\delta,\delta'\},$ we get for all $x\in\mathbb{B}_{\hat{\delta}}(\bar{x})\setminus\{\bar{x}\}:$
\begin{align*}
-\varepsilon & \leq\frac{d_S(x)-d_S(\bar{x})-\langle\xi,x-\bar{x}\rangle}{||x-\bar{x}||}\\
& \leq\frac{\sigma\underset{j\in J(\bar{x})}{\sum}\psi_j^+(x)-\sigma\underset{j\in J(\bar{x})}{\sum}\psi_j^+(\bar{x})-\langle\xi,x-\bar{x}\rangle}{||x-\bar{x}||},
\end{align*}
which yields immediately $\xi\in\sigma~\hat{\partial}\big(\underset{j\in J(\bar{x})}{\sum}\psi_j^+\big)(\bar{x}).$
Therefore, we get
\begin{align}
\hat{\partial}d_S(\bar{x}) & \subseteq\sigma~\hat{\partial}\big(\underset{j\in J(\bar{x})}{\sum}\psi_j^+\big)(\bar{x}) \label{align1}\\
& \subseteq\sigma~\partial_L\big(\underset{j\in J(\bar{x})}{\sum}\psi_j^+\big)(\bar{x}) \nonumber\\
& \subseteq\sigma~\underset{j\in J(\bar{x})}{\sum}\partial_L\psi_j^+(\bar{x}) \nonumber\\
& =\sigma\underset{j\in J(\bar{x})}{\sum}\co~\Big(\{0\}\cup\partial_T\psi_j(\bar{x})\Big) \label{00812**}\\
& =\sigma\underset{j\in J(\bar{x})}{\sum}\Big(\underset{\lambda_j\in[0,1]}{\bigcup}\lambda_j\partial_T\psi_j(\bar{x})\Big) \nonumber\\
& =\underset{j\in J(\bar{x})}{\sum}\Big(\underset{\lambda_j\in[0,\sigma]}{\bigcup}\lambda_j\partial_T\psi_j(\bar{x})\Big) \nonumber\\
& =\underset{j\in J(\bar{x})}{\sum}\underset{\lambda_j\in[0,\sigma]}{\bigcup}\lambda_j\Big(
\co\underset{v_j\in V_j(\bar{x})}{\bigcup}\partial_T^xg_j(\bar{x},v_j)\Big) \label{00812*}  \\
& =\underset{j\in J(\bar{x})}{\sum}\co\Big(
\bigcup_{\begin{array}{l}
 \scriptstyle{\quad  \lambda_i\in[0,\sigma]}\\
   \scriptstyle{\quad v_j\in V_j(\bar{x})}\\
  \end{array}}\lambda_j\partial_T^xg_j(\bar{x},v_j)\Big),\label{00812} 
\end{align}
where (\ref{00812**}) and (\ref{00812*}) are respectively, due to corollary \ref{cor01} and \cite[theorem 2]{p2}, which complete the proof.
\qed 
\end{proof}
The next theorem investigates the interrelations of the limiting subdifferential of the distance function and the tangential subdifferential of the constructed functions  under GEBCQ.
\begin{theorem}\label{th002}
Assume that GEBCQ  satisfies at $\bar{x}\in S$ with some positive scalers $\delta,\sigma.$ Then 
\begin{equation}\label{00813}
\partial_L d_S(\bar{x})\subseteq\sum_{j\in J(\bar{x})}\co\Big(
\bigcup_{\begin{array}{l}
 \scriptstyle{\quad  \lambda_j\in[0,\sigma]}\\
   \scriptstyle{\quad v_j\in V_j(\bar{x})}\\
  \end{array}}\lambda_j\partial_T^xg_j(\bar{x},v_j)\Big).
\end{equation}
\end{theorem}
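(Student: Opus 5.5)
The argument passes from Fréchet subgradients at nearby points to limiting subgradients at $\bar{x}$, in the same way Theorem \ref{601} was obtained from Theorem \ref{l2}. The key simplification is that the chain in the proof of Theorem \ref{th001} goes through the limiting subdifferential of $\sigma\sum_{j\in J(\bar{x})}\psi_j^+$ at $\bar{x}$. So we do not need to control the nearby points of $S$ directly. It suffices to show
\[
\partial_L d_S(\bar{x})\subseteq\sigma\,\partial_L\Big(\sum_{j\in J(\bar{x})}\psi_j^+\Big)(\bar{x}),
\]
after which the remaining equalities (\ref{00812**})--(\ref{00812}) give (\ref{00813}) verbatim.

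\textbf{Step 1.} Take $\xi\in\partial_L d_S(\bar{x})$. By (\ref{600}) there are $x^k\to\bar{x}$ and $\xi^k\in\hat{\partial}d_S(x^k)$ with $\xi^k\to\xi$; continuity of $d_S$ makes the attentive convergence automatic. We split into two cases according to whether infinitely many $x^k$ lie in $S$ or outside $S$.

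\textbf{Step 2 (points outside $S$).} If $x^k\notin S$ with $x^k\in\mathbb{B}_{\delta}(\bar{x})$, then $\psi_j^+$ may be positive at $x^k$, so the inequality used in Theorem \ref{th001} must be rewritten at $x^k$. The natural device is to replace $\psi^+$ by $\phi:=\sigma\sum_{j\in J(\bar{x})}\psi_j^+ + d_S$-type majorants. More precisely, note that $d_S\le \phi:=\sigma\sum_{j\in J(\bar{x})}\psi_j^+$ holds on all of $\mathbb{B}_{\delta}(\bar{x})$, trivially on $S$ because $d_S=0\le\phi$ there. The problem is that equality $d_S(x^k)=\phi(x^k)$ fails, so a Fréchet subgradient of $d_S$ at $x^k$ is not directly one of $\phi$.

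\textbf{Step 3 (the main obstacle: repairing the touching condition).} I would handle this with the standard projection trick. For $x^k\notin S$, a Fréchet subgradient of the distance function is known to satisfy $\xi^k\in\hat{N}(p^k;S)\cap\mathbb{B}$, where $p^k$ is a projection of $x^k$ onto $S$. Moreover $p^k\to\bar{x}$ and $p^k\in S$. For $x^k\in S$ we have $\xi^k\in\hat{N}(x^k;S)\cap\mathbb{B}$, so set $p^k:=x^k$. In both cases we thus obtain $\xi^k\in\hat{\partial}d_S(p^k)$ with $p^k\in S$, using $\hat{\partial}d_S(p)=\hat{N}(p;S)\cap\mathbb{B}$ for $p\in S$.

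At $p^k\in S$ we do have $d_S(p^k)=0\le\phi(p^k)$. If $\phi(p^k)=0$, the argument of Theorem \ref{th001} applies at $p^k$ and yields $\xi^k\in\hat{\partial}\phi(p^k)$. Since $\psi_j$ is continuous by $A_2$, we get $\phi(p^k)\to\phi(\bar{x})=0$, and passing to the limit gives $\xi\in\partial_L\phi(\bar{x})$. If instead $\phi(p^k)>0$, which can only occur when some $\psi_j(p^k)>0$, this is impossible for $p^k\in S$. Hence only the case $\phi(p^k)=0$ arises, and the argument closes.

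The delicate point is the identity $\hat{\partial}d_S(x)\subseteq\hat{N}(p;S)$ for points off $S$. I would first try the direct estimate $d_S(y)\le d_S(x)+\|y-p\|-\|x-p\|$. Should that fail for a general nonconvex $S$, the fallback is to use the GEBCQ inequality itself at $x^k\notin S$. This works because $d_S(x)\le\phi(x)$ while the touching gap vanishes in the limit, and $\phi$ is Lipschitz, so an approximate Fréchet subgradient argument still produces a limit in $\partial_L\phi(\bar{x})$.
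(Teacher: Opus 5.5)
Your proof is correct, and it differs from the paper's mainly in where the limit is taken.

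\textbf{The paper's route.} The paper cites $\partial_L d_S(\bar{x})=N_L(\bar{x};S)\cap\mathbb{B}$ to obtain $x^k\in S$ and $\xi^k\in\hat{N}(x^k;S)\cap\mathbb{B}=\hat{\partial}d_S(x^k)$. It then reruns the whole chain of Theorem \ref{th001} at each $x^k$, writing $\xi^k=\sum_{j\in J(\bar{x})}\lambda_j^k\xi_j^k$ with $\xi_j^k\in\partial_T\psi_j(x^k)$, and finally passes to the limit in $\lambda_j^k$ and $\xi_j^k$. That last step silently needs boundedness and a closed graph of $x\mapsto\partial_T\psi_j(x)$ at $\bar{x}$. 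Also, its normalization $\|\xi^k\|\le 1$ really requires rescaling $\xi^k$ inside the cone, not just passing to a subsequence.

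\textbf{Your route.} You stop at $\xi^k\in\hat{\partial}\phi(p^k)$, with $\phi=\sigma\sum_{j\in J(\bar{x})}\psi_j^+$, $p^k\in S$ and $\phi(p^k)=0$. The definition (\ref{600}) of $\partial_L\phi(\bar{x})$ then absorbs the limit. As a result, the calculus chain of Theorem \ref{th001} is used only once, at $\bar{x}$, and no semicontinuity of $\partial_T\psi_j$ is needed. The bound $\|\xi^k\|\le 1$ is automatic because $d_S$ is $1$-Lipschitz. The price is that you re-derive the reduction to points of $S$ instead of citing it. You should also state explicitly that $\|p^k-\bar{x}\|\le 2\|x^k-\bar{x}\|$. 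This puts $p^k$ eventually in the interior of $\mathbb{B}_\delta(\bar{x})$, so that $d_S\le\phi$ holds on a whole neighbourhood of $p^k$.

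\textbf{The ``delicate point''.} It is routine, and your first idea does not fail for nonconvex $S$, but it needs one more line. The estimate $d_S\le\|\cdot-p^k\|$, with equality at $x^k\notin S$, only gives $\hat{\partial}d_S(x^k)\subseteq\{(x^k-p^k)/\|x^k-p^k\|\}$. Normality comes separately from minimality of $p^k$: for every $y\in S$, $\|x^k-p^k\|^2\le\|x^k-y\|^2$, i.e.
$\langle x^k-p^k,y-p^k\rangle\le\frac{1}{2}\|y-p^k\|^2$. Applying this along $p^k+t_md_m\in S$ and dividing by $t_m$ shows that $x^k-p^k$ lies in the polar of $T(p^k;S)$, which is $\hat{N}(p^k;S)$. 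Alternatively, cite $\partial_L d_S(\bar{x})=N_L(\bar{x};S)\cap\mathbb{B}$ as the paper does, which removes your case split altogether.

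\textbf{The fallback.} Discard it. At $x^k\notin S$ the majorant $\phi$ does not touch $d_S$. A fixed positive additive gap $\phi(x^k)-d_S(x^k)$ is not $o(\|y-x^k\|)$, so it yields no Fr\'{e}chet subgradient of $\phi$ at $x^k$ without substantially more machinery.
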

\begin{proof}
It is known by \cite[Theorem 1.33]{boris4} that for each $\bar{x}\in S:$
\begin{equation}\label{0801}
\partial_Ld_S(\bar{x})=N_L(\bar{x};S)\cap\mathbb{B}.
\end{equation}
Now let us take $\xi\in\partial_Ld_S(\bar{x}).$ Clearly (\ref{0801}), yields  $\xi\in N_L(\bar{x};S)$ where $||\xi||\leq1.$
Hence we can find sequences $\xi^k\rightarrow\xi$ and $x^k\rightarrow\bar{x}$ such that for each $k\in\mathbb{N},~x^k\in S$ and $\xi^k\in\hat{N}(x^k;S).$ Passing to a subsequence, we can assume without loss of generality that $||\xi^k||\leq1$ and $\xi^k\in\hat{N}(x^k;S)\cap\mathbb{B}=\hat{\partial}d_S(x^k),$ (see \cite[Theorem 1.33]{boris4}).
By the similar arguments as used in Theorem \ref{th001}, it follows that
\begin{equation}\label{00810}
\xi^k\in\sum_{\begin{array}{l}
 \scriptstyle{\quad  j\in J(\bar{x})}
  \end{array}}\underset{\lambda_j\in[0,\sigma]}{\bigcup}~\lambda_j\partial_T\psi_j(x^k).
\end{equation}
There are $\xi_j^k\in\partial_T\psi_j(x^k)$ and  $\lambda_j^k\in[0,\sigma]$ such that $\xi^k=\underset{j\in J(\bar{x})}{\sum}\lambda_j^k\xi_j^k.$ 
Due to the boundedness of $\partial_T\psi_j(x^k)$ and $\lambda_j^k,$ we can assume by passing to a subsequence that $\xi_j^k\rightarrow\xi_j\in\partial_T\psi_j(\bar{x})$ and $\lambda_j^k\rightarrow\lambda_j,~j\in J(\bar{x}).$ Therefore, taking limit as $k\rightarrow\infty,$ we get
\begin{align*}
\xi & =\underset{j\in J(\bar{x})}{\sum}\lambda_j\xi_j\\
 &\in\sum_{\begin{array}{l}
 \scriptstyle{\quad  j\in J(\bar{x})}\\
   \scriptstyle{\quad \lambda_j\in[0,\sigma]}\\
  \end{array}}\lambda_j\partial_T\psi_j(\bar{x}) \\
  & =\underset{j\in J(\bar{x})}{\sum}\co\Big(
\bigcup_{\begin{array}{l}
 \scriptstyle{\quad  \lambda_i\in[0,\sigma]}\\
   \scriptstyle{\quad v_j\in V_j(\bar{x})}\\
  \end{array}}\lambda_j\partial_T^xg_j(\bar{x},v_j)\Big),
\end{align*}
where the last equality is due to (\ref{00812}). Hence (\ref{00813}) is satisfied and the proof is completed. 
\qed
\end{proof}
The last result of this section verifies that GEBCQ implies GACQ.
\begin{theorem}\label{t11}
Suppose that the GEBCQ holds at the feasible point $\bar{x}\in S.$ Then $\bar{x}$ satisfies the GACQ.
\end{theorem}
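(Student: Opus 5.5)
Take an arbitrary $d\in G'(\bar{x})$; the goal is to show $d\in T(\bar{x};S)$. We may assume $d\neq 0$, since $0\in T(\bar{x};S)$ trivially. The plan is to use the error bound (\ref{700}) along the ray $\bar{x}+td$. This produces feasible points whose distance from $\bar{x}+td$ is $o(t)$, and those points supply the sequence required in the definition of the contingent cone.

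First, fix $t_k\downarrow0$ and set $x_k:=\bar{x}+t_kd$. For $k$ large, $x_k\in\mathbb{B}_{\delta}(\bar{x})$. If $x_k\in S$, put $y_k:=x_k$. Otherwise (\ref{700}) gives
\[
d_S(x_k)\leq\sigma\sum_{j\in J(\bar{x})}\psi_j^+(\bar{x}+t_kd).
\]
Since $S$ is closed, we can choose $y_k\in S$ with $\|y_k-x_k\|=d_S(x_k)$. Now set $d_k:=(y_k-\bar{x})/t_k$, so that $\bar{x}+t_kd_k=y_k\in S$. It remains to show $d_k\to d$, which amounts to showing $d_S(x_k)/t_k\to0$.

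Second, we estimate each term. For $j\in J(\bar{x})$ we have $\psi_j(\bar{x})=0$. By Theorem \ref{l1} (applied with $h\equiv0$, which is legitimate since the $g_j$ satisfy $A_1$--$A_4$), $\psi_j$ is directionally differentiable at $\bar{x}$ and
\[
\frac{\psi_j(\bar{x}+t_kd)}{t_k}\to\psi_j'(\bar{x};d)\leq0,
\]
where the inequality holds because $d\in G'(\bar{x})$. Hence $\limsup_k\psi_j(\bar{x}+t_kd)/t_k\leq0$. Since $\psi_j^+=\max\{\psi_j,0\}$, it follows that $\psi_j^+(\bar{x}+t_kd)/t_k\to0$ for every $j\in J(\bar{x})$. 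Summing over the finitely many active indices gives $d_S(x_k)/t_k\to0$, and therefore $\|d_k-d\|=\|y_k-x_k\|/t_k\to0$. Thus $d\in T(\bar{x};S)$, so $G'(\bar{x})\subseteq T(\bar{x};S)$, which is GACQ.

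The main point to watch is that the error bound sums only over the active indices $J(\bar{x})$. The inactive constraints therefore never need to be controlled, and no continuity of the inactive $\psi_j$ is required. One must only check that the ray points stay inside $\mathbb{B}_{\delta}(\bar{x})$, which holds for large $k$, and handle separately the case $x_k\in S$, where $d_S(x_k)=0$ and the bound is trivial. The only analytic input is the existence and finiteness of $\psi_j'(\bar{x};d)$ from Theorem \ref{l1}; the rest is a direct limit argument.
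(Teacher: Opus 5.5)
Your proof is correct and complete, but it takes a different route from the paper. The paper gives no direct argument. It only notes that each active $\psi_j$ is tangentially convex at $\bar{x}$: by Theorem \ref{l1}, $\psi_j'(\bar{x};\cdot)=\max_{v_j\in V_j(\bar{x})}(g_j)'_x(\bar{x},v_j;\cdot)$ is a finite maximum of convex functions. It then invokes \cite[Theorem 3.2]{MMN1} from the authors' earlier work on tangentially convex constraint systems and leaves the details to the reader.

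You instead prove the implication from scratch with the classical ray argument. You project $\bar{x}+t_kd$ onto $S$ and use the error bound to show that the projection error is $o(t_k)$, which produces the sequence $d_k\to d$ with $\bar{x}+t_kd_k\in S$.

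Your route is self-contained and shows exactly what is needed. For $j\in J(\bar{x})$ you use only $\psi_j(\bar{x})=0$ and $\limsup_{t\downarrow0}\psi_j(\bar{x}+td)/t\leq0$ (here guaranteed by the existence of $\psi_j'(\bar{x};d)$ and $d\in G'(\bar{x})$). The convexity of $d\mapsto\psi_j'(\bar{x};d)$, which is what the paper must verify to match the hypotheses of the cited theorem, plays no role in your argument. The inactive constraints never enter either.

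The paper's route buys brevity and places the result within the authors' existing framework. The price is relying on an external theorem whose hypotheses the reader must check.
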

\begin{proof}
By the fact that the functions $\psi_j~(j\in J(\bar{x}))$ are tangentially convex at $\bar{x}\in S,$ it is easily seen that all the assumptions of \cite[Theorem 3.2]{MMN1} are satisfied. Hence the proof is simple and left to the reader.
\qed
\end{proof}
We conclude this section with the following illustrative examples.
The first example present a situation that both GEBCQ and GACQ are satisfied.
\begin{example}\label{exam3}
Consider the following constrained system:
\begin{equation*}\label{00801}
S:=\Big\{x=(x_1,x_2)\in\mathbb{R}^2~|~\psi_j(x):=\underset{v_j\in V_j}{\max}~g_j(x,v_j)\leq0~|~ j=1,2,3\Big\},
\end{equation*}
with
  \begin{eqnarray*}
    \begin{array}{lr}
  \displaystyle g_1(x,v_{1}):=-x_1+2v_{11}v_{12}|x_2|\leq0,\ \forall v_1=(v_{11},v_{12})\in V_1, \\
\displaystyle  g_2(x,v_2):=-(v_{21}+1)^2x_1^2-(v_{22}+1)(x_2-1)^2+1\leq0,\ \forall v_2=(v_{21},v_{22})\in V_2,\\
\displaystyle  g_3(x,v_3):=(v_{31}-1)^2x_1^2+(v_{32}-1)(x_2+1)^2+1\leq0,\ \forall v_3=(v_{31},v_{32})\in V_3,
   \end{array}
  \end{eqnarray*}
where
\begin{eqnarray*}
    \begin{array}{lr}
  \displaystyle V_1:=\big\{v_1=(v_{11},v_{12})\in\mathbb{R}^2\ |\ v_{11}^2+v_{12}^2\leq1,~ v_{11}\leq0~\mbox{or}~  v_{12}\leq0\big\}, \\
\displaystyle V_2:=[0,1]\times[0,1]~
\mbox{and}~
V_3:=[-1,0]\times[-1,0].
   \end{array}
  \end{eqnarray*}
Clearly,
\[
 \psi_1(x):=\max_{v_1\in V_1}\ g_1(x,v_1)=
\left\{
\begin{array}{ll}
-x_1+|x_2|, &\  x_2\neq0\\
-x_1,  & \  x_2 = 0,
\end{array} \right.
 \]
 \[
\psi_2(x):=\max_{v_2\in V_2}\ g_2(x,v_2)=
\left\{
\begin{array}{ll}
1, & \ x_1=0,x_2=1\\
-(x_2-1)^2+1,  & \ x_1 = 0,x_2\neq1\\
-x_1^2+1,    & \ x_1\neq0,x_2=1\\
-x_1^2-(x_2-1)^2+1, & \ x_1\neq0, x_2\neq1,
\end{array} \right.
 \]
 and
\[
\psi_3(x):=\max_{v_3\in V_3}\ g_3(x,v_3)=
\left\{
\begin{array}{ll}
1, & \ x_1=0,x_2=-1\\
-(x_2+1)^2+1,  & \ x_1 = 0,x_2\neq-1\\
-x_1^2+1,    & \ x_1\neq0,x_2=-1\\
-x_1^2-(x_2+1)^2+1, & \ x_1\neq0, x_2\neq-1.
\end{array} \right.
 \]
 Moreover,
  \[
V_1(x)=
\left\{
\begin{array}{ll}
\{(\frac{-1}{\sqrt{2}},\frac{-1}{\sqrt{2}})\}, & \ x_2\neq0\\
V_1,  &\ x_2=0,
\end{array} \right.
 \]
 \[
V_2(x)=V_{21}(x)\times V_{22}(x),
\]
where
 \[
V_{21}(x)=
\left\{
\begin{array}{ll}
[0,1], & \ x_1=0\\
\{0\},  &\ x_1\neq0,
\end{array} \right.
\qquad
V_{22}(x)=
\left\{
\begin{array}{ll}
[0,1], & \ x_2=1\\
\{0\},  &\ x_2\neq1,
\end{array} \right.
 \]
and
\[
V_3(x)=V_{31}(x)\times V_{32}(x),
 \]
where
 \[
V_{31}(x)=
\left\{
\begin{array}{ll}
[-1,0], & \ x_1=0\\
\{0\},  &\ x_1\neq0,
\end{array} \right.
\quad
V_{32}(x)=
\left\{
\begin{array}{ll}
[-1,0], & \ x_2=-1\\
\{0\},  &\ x_2\neq-1.
\end{array} \right.
\] 
It is easily seen that the functions $\psi_j(j=1,2,3)$ satisfy assumptions $A_1-A_4$ at the feasible point $\bar{x}=(0,0).$  An easy computation shows that $\partial_T \psi_1(\bar{x})=\{-1\}\times[-1,1],\ $ $\partial_T \psi_2(\bar{x})=\{(0,2)\}$ and $\partial_T \psi_3(\bar{x})=\{(0,-2)\}.$ Further, we see that for all $d=(d_1,d_2)\in\mathbb{R}^2,\ \psi'_1(\bar{x};d)=-d_1+|d_2| ,\ \psi'_2(\bar{x};d)=2d_2$
and $\psi'_3(\bar{x};d)=-2d_2.$
Hence, $T(\bar{x};S)=G'(\bar{x})=\mathbb{R}_+\times\{0\},$ which shows that GACQ  holds at $\bar{x}.$
Furthermore, it is a simple matter to see that GEBCQ is satisfied at $\bar{x}$ with $\sigma=1.$
\end{example} 
The second example indicates that GACQ is not a necessary condition for GEBCQ.
{\begin{example}\label{exa003}
Consider the following set $S$ is given by
\begin{equation*}\label{00801}
S:=\Big\{x=(x_1,x_2)\in\mathbb{R}^2|\ \psi(x):=\underset{v\in V}{\max}~g(x,v)\leq0~|~ v_1^2+v_2^2\leq1,~v_1v_2\geq0\Big\}.
\end{equation*}
 where $g(x,v)= ||(v_1,v_2)||||(x_1,x_2)||-x_2,~v=(v_1,v_2)\in V.$  Obviously $\psi$ satisfies assumptions $A_1-A_4$ at $\bar{x}=(0,0)\in S,$ for all $v\in V.$ A simple computations gives us
\[
 \psi(x)=
\left\{
\begin{array}{ll}
0, &\  (x_1,x_2)=0\\
||(x_1,x_2)||-x_2,  & \ (x_1,x_2)\neq0,
\end{array} \right.
 \]
and
 \[
V(x)=\Big\{(v_1,v_2)\in V\ |\ v_1^2+v_2^2=1,\ v_1v_2\geq0\Big\}.
 \]
 Therefore,
 $\psi'(\bar{x};d)=||(d_1,d_2)||-d_2$ and $T(S;\bar{x})=G'(\bar{x})=S=\{0\}\times\mathbb{R}_+,$
which implies that  GACQ  satisfies at $\bar{x}.$  Now let us show that GEBCQ is not satisfied at $\bar{x}.$
  Taking $x_k=(\frac{1}{k^2},\frac{1}{k}),$ it is clear  that for each arbitrary $k\in\mathbb{N},$
  $d_S(x_k)=\frac{1}{k^2}$ and 
  $\psi^+(x_k)=\sqrt{\frac{1}{k^2}+1}-\frac{1}{k}.$
  Thus one has
  \[\lim_{k\rightarrow+\infty}\frac{d_S(x_k)}{\psi^+(x_k)}=
  \lim_{k\rightarrow+\infty}\frac{\frac{1}{k^2}}{\sqrt{\frac{1}{k^4}+\frac{1}{k^2}}-\frac{1}{k}}=+\infty,\]
  which contradicts GEBCQ at $\bar{x}.$ 
\end{example}
\section{Optimality Conditions for a Robust DTC  programming problem}\label{OPT} 
The main aim of this section is to obtain the optimality conditions for $RP_S$ in terms of tangential subdifferential under GEBCQ and GACQ.  In the following, we divide our research into two parts: 
\begin{itemize} 
 \item[1.] We will 
investigate our optimality conditions for $RP_S$ in a special form, where $\psi_0$  is defined as a maximum of DTC functions.
\item[2.] We try to obtain optimality conditions for $RP_S,$ where $\psi_0$ is defined as a DTC/supremum objective function. 
\end{itemize}
\subsection{Problems with  DTC  Objective Functions under Uncertainty}\label{Sopt}
Consider $RP_S$ that is defined by
\begin{align*}
  \min &\quad\psi_0(x):=\underset{v_0\in V_0}{\max}~ g_0(x,v_0)-h_0(x,v_0), \hspace{2cm}  (RP_1)\\
  \mbox{s.t.} & \hspace{.4cm}x\in S,
    \end{align*}
where $V_0\subseteq\mathbb{R}^q,$ 
is a  nonempty compact subset of $\mathbb{R}^q$ 
 and
 $g_0,h_0:\mathbb{R}^n\times V_0\rightarrow\mathbb{R}\cup\{+\infty\}.$
The following theorem provides necessary conditions for robust optimality in $RP_1.$ 
\begin{theorem}\label{621}
Suppose that $\bar{x}\in S$ is a local minimizer of $RP_1.$  Then 
\begin{equation}\label{622}
0\in\co\underset{v_0\in V_0(\bar{x})}{\bigcup}\Big(\partial_T^xg_0(\bar{x},v_0)-\partial_T^xh_0(\bar{x},v_0)\Big)+K\partial_Ld_S(\bar{x}),
\end{equation}
where $L$ is a positive constant.
\end{theorem}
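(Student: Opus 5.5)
The plan is to use exact penalization with the distance function, then pass to limiting subdifferentials through a sum rule, and finally apply Theorem \ref{601}. The constant in (\ref{622}) is the local Lipschitz modulus $K$ of $\psi_0$, which plays the role of the "positive constant" in the statement.

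First I would get that modulus. By assumption $(A_2)$ applied to $g_0,h_0$, there are a neighbourhood $N(\bar{x})$ and $K>0$ with $|(g_0-h_0)(y,v_0)-(g_0-h_0)(z,v_0)|\le 2K_0\|y-z\|$ for all $y,z\in N(\bar{x})$ and $v_0\in V_0$. Taking the maximum over the compact set $V_0$ shows that $\psi_0$ is Lipschitz on $N(\bar{x})$ with modulus $K:=2K_0$.

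Next comes Clarke's exact penalty principle. Since $\bar{x}$ is a local minimizer of $\psi_0$ on the closed set $S$, it is a local minimizer of $\psi_0+Kd_S$ without constraints. To check this, shrink to a ball $\mathbb{B}_\delta(\bar{x})$ with $2\delta$-neighbourhood inside $N(\bar{x})$, and pick $x\in\mathbb{B}_\delta(\bar{x})$. Choose $y\in S$ with $\|x-y\|=d_S(x)\le\|x-\bar{x}\|\le\delta$, so $y\in\mathbb{B}_{2\delta}(\bar{x})$. Then $\psi_0(\bar{x})\le\psi_0(y)\le\psi_0(x)+K\|x-y\|=\psi_0(x)+Kd_S(x)$, while $d_S(\bar{x})=0$.

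Then I apply Fermat's rule in limiting form, $0\in\partial_L(\psi_0+Kd_S)(\bar{x})$. This holds because $0\in\hat\partial(\psi_0+Kd_S)(\bar{x})\subseteq\partial_L(\psi_0+Kd_S)(\bar{x})$.

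Both $\psi_0$ and $Kd_S$ are locally Lipschitz, so the Mordukhovich sum rule \cite{boris4} gives
$$0\in\partial_L\psi_0(\bar{x})+\partial_L(Kd_S)(\bar{x})=\partial_L\psi_0(\bar{x})+K\partial_Ld_S(\bar{x}).$$
Here positive homogeneity $\partial_L(Kd_S)=K\partial_Ld_S$ holds for $K>0$. Finally, Theorem \ref{601} applied to $\psi_0$, with $g=g_0$, $h=h_0$ and $V=V_0$ under $(A_1)$–$(A_4)$, gives
$$\partial_L\psi_0(\bar{x})\subseteq\co\bigcup_{v_0\in V_0(\bar{x})}\Big(\partial_T^xg_0(\bar{x},v_0)-\partial_T^xh_0(\bar{x},v_0)\Big).$$
Substituting this yields (\ref{622}).

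The main obstacle is the sum rule step. We cannot work directly with Fréchet subdifferentials, since there is no exact Fréchet sum rule. The limiting sum rule needs local Lipschitzness, or at least SNEC properties, and that is exactly why the Lipschitz modulus from $(A_2)$ had to be established first. The rest is bookkeeping with the compactness of $V_0$ and a direct appeal to Theorem \ref{601}.
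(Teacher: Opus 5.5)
Your proposal is correct and follows the paper's own proof: exact penalization with $K d_S$, the limiting Fermat rule, the Lipschitz sum rule for $\partial_L$, and then Theorem \ref{601}. The only difference is that you spell out two steps the paper leaves implicit, namely the Lipschitz modulus of $\psi_0$ obtained from $(A_2)$ and the Clarke penalty argument the paper cites from \cite[Proposition 6.3.2]{Borwein}; you also correctly write $\subseteq$ where the paper writes $\in$ in the sum-rule step.
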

\begin{proof}
Since $\psi_0$ is a Lipschitz function, we can use the exact penalty theorem \cite[Proposition 6.3.2]{Borwein}, for a sufficiently large enough $K>0,$ and get by the local optimality at $\bar{x}:$
\begin{align*}
 0 &\in\partial_L\big(\psi_0+Kd_S\big)(\bar{x})\\
   & \in\partial_L\psi_0(\bar{x})+K\partial_Ld_S(\bar{x})\\
 & \subseteq\co\underset{v_0\in V_0(\bar{x})}{\bigcup}\Big(\partial_T^xg_0(\bar{x},v_0)-\partial_T^xh_0(\bar{x},v_0)\Big)+K\partial_Ld_S(\bar{x}),
 \end{align*}
where the last inclusion is due to Theorem \ref{601} and completes the proof.
\qed
\end{proof}
The following theorem provides necessary conditions for optimality in $RP_1$ under GEBCQ.
\begin{theorem}\label{701}
Let $\bar{x}$ be a local optimal solution for $RP_1$ and GEBCQ holds at this point. Then there exist $K>0,$ with some constant $\sigma>0$ such that
\begin{equation}\label{702}
0\in\co\underset{v_0\in V_0(\bar{x})}{\bigcup}\Big(\partial_T^xg_0(\bar{x},v_0)-\partial_T^xh_0(\bar{x},v_0)\Big)+
  \underset{j\in J(\bar{x})}{\sum}\sum_{i=1}^{n+1}
  \lambda_{ij}~\partial_T^xg_j(\bar{x},v_{ij}),
    \end{equation}
    where  $v_{ij}\in V_j(\bar{x}),~ \lambda_{ij}\geq0,~,\sum_{i=1}^{n+1}\lambda_{ij}\in[0,\sigma K].$ 
\end{theorem}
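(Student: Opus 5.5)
The plan is to combine Theorem \ref{621} with Theorem \ref{th002}, and then unpack the convex hulls by Carath\'eodory's theorem.

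First, by Theorem \ref{621} (exact penalization with a large constant $K>0$, valid since $\psi_0$ is locally Lipschitz by assumption $A_2$), the local minimality of $\bar{x}$ gives an element $\zeta\in\co\bigcup_{v_0\in V_0(\bar{x})}\big(\partial_T^xg_0(\bar{x},v_0)-\partial_T^xh_0(\bar{x},v_0)\big)$ and an element $\eta\in\partial_Ld_S(\bar{x})$ with $0=\zeta+K\eta$.

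Second, since GEBCQ holds at $\bar{x}$ with constants $\delta,\sigma$, Theorem \ref{th002} yields
\[
\eta\in\sum_{j\in J(\bar{x})}\co\Big(\bigcup_{\lambda_j\in[0,\sigma],\,v_j\in V_j(\bar{x})}\lambda_j\partial_T^xg_j(\bar{x},v_j)\Big).
\]
For each $j\in J(\bar{x})$ I write the $j$-th component $\eta_j$ as a convex combination. By Carath\'eodory's theorem in $\mathbb{R}^n$ it needs at most $n+1$ points, so
\[
\eta_j=\sum_{i=1}^{n+1}\mu_{ij}\tilde\lambda_{ij}\xi_{ij},
\]
with $\mu_{ij}\ge0$, $\sum_i\mu_{ij}=1$, $\tilde\lambda_{ij}\in[0,\sigma]$, $v_{ij}\in V_j(\bar{x})$ and $\xi_{ij}\in\partial_T^xg_j(\bar{x},v_{ij})$.

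Third, I multiply by $K$ and set $\lambda_{ij}:=K\mu_{ij}\tilde\lambda_{ij}\ge0$. Then
\[
\sum_{i=1}^{n+1}\lambda_{ij}\le K\sigma\sum_i\mu_{ij}=\sigma K,
\]
and
\[
K\eta=\sum_{j\in J(\bar{x})}\sum_{i=1}^{n+1}\lambda_{ij}\xi_{ij}\in\sum_{j\in J(\bar{x})}\sum_{i=1}^{n+1}\lambda_{ij}\,\partial_T^xg_j(\bar{x},v_{ij}).
\]
Substituting into $0=\zeta+K\eta$ gives (\ref{702}) with the stated bounds on the multipliers.

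The main obstacle is justifying the second step cleanly. Theorem \ref{th002} is stated through (\ref{00812}), which rests on Corollary \ref{cor01} (regularity of each $\psi_j$, so that $\partial_L\psi_j^+(\bar{x})=\co(\{0\}\cup\partial_T\psi_j(\bar{x}))$) and on the sum rule for $\partial_L$ of the Lipschitz functions $\psi_j^+$. If the representation of $\eta$ via elements of the form $\lambda_j\partial_T\psi_j(\bar{x})$ is more convenient, I would instead work with
\[
\partial_T\psi_j(\bar{x})=\co\bigcup_{v_j\in V_j(\bar{x})}\partial_T^xg_j(\bar{x},v_j),
\]
taken from \cite[Theorem 2]{p2}, and apply Carath\'eodory there. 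This gives exactly $n+1$ pairs $(v_{ij},\xi_{ij})$ with weights summing to $\lambda_j\le\sigma$, which again produces the bound $\sigma K$ after scaling by $K$.
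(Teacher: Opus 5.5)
Your proposal is correct and follows the paper's own route. The paper simply declares the result a direct consequence of Theorems \ref{621} and \ref{th002}, and your Carath\'eodory decomposition of each $\eta_j$ with the rescaling $\lambda_{ij}:=K\mu_{ij}\tilde\lambda_{ij}$ (giving $\sum_{i}\lambda_{ij}\le\sigma K$) is exactly the bookkeeping the paper leaves implicit.
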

\begin{proof}
The result is a direct consequence of Theorems \ref{th002} and \ref{621}.
 \qed
\end{proof}

The next result of this section establishes the optimality conditions under  GACQ.
\begin{theorem}\label{ACQ}
   Suppose that $\bar{x}$ is a local optimal solution of $RP_1,$ and
    GACQ satisfies at this point.
    Then 
   \begin{equation}\label{0702}
0\in\co\underset{v_0\in V_0(\bar{x})}{\bigcup}\Big(\partial_T^xg_0(\bar{x},v_0)-\partial_T^xh_0(\bar{x},v_0)\Big)+
  \cl\Big(\underset{{\begin{array}{l}
 \scriptstyle{\quad v_{ij}\in V_j(\bar{x}) }\\
   \scriptstyle{\lambda_{ij}\geq0,\ j\in J(\bar{x})}\\
   \scriptstyle{\qquad l\in\mathbb{N}}\\
  \end{array}}}{\bigcup}
  \sum_{i=1}^l
  \lambda_{ij}\partial_T^xg_j(\bar{x},v_{ij})\Big).
    \end{equation}

   \end{theorem}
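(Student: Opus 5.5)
The proof will follow the classical route from Abadie's condition to a KKT-type multiplier rule, carried out with directional derivatives and support functions, and will avoid limiting subdifferentials. The starting point is a primal condition. Since $\bar{x}$ is a local minimizer of $\psi_0$ over $S$ and $\psi_0$ is directionally differentiable (Theorem~\ref{l1}) and Lipschitz near $\bar{x}$ (assumption $A_2$), we have
\[
\psi_0'(\bar{x};d)\geq 0\quad \mbox{for all } d\in T(\bar{x};S).
\]
To see this, take $d\in T(\bar{x};S)$ with $t_k\downarrow0$, $d_k\to d$ and $\bar{x}+t_kd_k\in S$. The Lipschitz property lets us replace $d_k$ by $d$ in the difference quotients, so
\[
0\le \frac{\psi_0(\bar{x}+t_kd_k)-\psi_0(\bar{x})}{t_k}\to\psi_0'(\bar{x};d).
\]
By GACQ this inequality holds on the larger cone $G'(\bar{x})$.

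Next we describe $G'(\bar{x})$ by tangential subdifferentials. By Corollary~\ref{cor01}, or \cite[Theorem 2]{p2}, for each $j\in J(\bar{x})$ we have
\[
\psi_j'(\bar{x};d)=\max\Big\{\langle\zeta,d\rangle \ \Big|\ \zeta\in\partial_T\psi_j(\bar{x})=\co\bigcup_{v_j\in V_j(\bar{x})}\partial_T^xg_j(\bar{x},v_j)\Big\}.
\]
Hence $G'(\bar{x})=C^-$, where $C$ is the convex cone generated by $\bigcup_{j\in J(\bar{x})}\bigcup_{v_j\in V_j(\bar{x})}\partial_T^xg_j(\bar{x},v_j)$. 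This $C$ is exactly the union over finite sums $\sum_{i=1}^l\lambda_{ij}\partial_T^xg_j(\bar{x},v_{ij})$ appearing in (\ref{0702}). The bipolar theorem then gives $G'(\bar{x})^-=\cl C$.

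Now we turn the primal condition into a dual one. Write $\Phi(d):=\psi_0'(\bar{x};d)$. By Theorem~\ref{l1}, $\Phi(d)=\max_{v_0\in V_0(\bar{x})}\big(g'_{0x}(\bar{x},v_0;d)-h'_{0x}(\bar{x},v_0;d)\big)$, and $\Phi(d)\ge0$ on $C^-$. This is the main obstacle, because $\Phi$ is not convex. The first thing to try is to convexify by freezing the concave part. For each $v_0$, $-h'_{0x}(\bar{x},v_0;d)\le -\langle\eta_{v_0},d\rangle$ for every $\eta_{v_0}\in\partial_T^xh_0(\bar{x},v_0)$. One cannot simply fix one $\eta_{v_0}$ for all $d$, however. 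Instead one argues as in the proof of Theorem~\ref{l2}: if $0$ did not lie in the sum $A+\cl C$ with $A:=\co\bigcup_{v_0\in V_0(\bar{x})}(\partial_T^xg_0(\bar{x},v_0)-\partial_T^xh_0(\bar{x},v_0))$, then separation would produce some $\bar d$ with $\sigma_A(\bar d)<0$ and $\bar d\in C^-=G'(\bar{x})$. Here $A$ is compact by Lemma~\ref{00631} and the u.s.c.\ arguments of Theorem~\ref{601}, so $A+\cl C$ is closed and convex and separation applies.

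It then remains to show $\Phi(\bar d)\le\sigma_A(\bar d)$, which would contradict $\Phi(\bar d)\ge0$. For this, choose $\eta_{v_0}$ attaining $h'_{0x}(\bar{x},v_0;\bar d)$; this is possible because $h'_{0x}(\bar{x},v_0;\cdot)$ is the support function of $\partial_T^xh_0(\bar{x},v_0)$. Likewise choose $\zeta_{v_0}\in\partial_T^xg_0(\bar{x},v_0)$ attaining $g'_{0x}(\bar{x},v_0;\bar d)$. Then
\[
\Phi(\bar d)=\max_{v_0}\langle\zeta_{v_0}-\eta_{v_0},\bar d\rangle\le\sigma_A(\bar d)<0,
\]
which is the contradiction. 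Hence $0\in A+\cl C$, which is (\ref{0702}). The delicate points to check are the closedness of $A+\cl C$, which follows from the compactness of $A$, and the exact identification of the cone generated by the $\partial_T^xg_j$ with the union written in (\ref{0702}).
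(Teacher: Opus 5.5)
Your architecture is sound and differs from the paper's route. The paper penalizes the linearized problem $\min\{\psi_0'(\bar{x};d) : d\in T(\bar{x};S)\}$, applies the limiting sum rule and Theorem~\ref{601} to $d\mapsto\psi_0'(\bar{x};d)$ at $0$, and then computes $N(0;G'(\bar{x}))=(G'(\bar{x}))^-=\cl C$. Several of your steps are correct: the primal step, the identification $G'(\bar{x})=C^-$ with $(G'(\bar{x}))^-=\cl C$, and the reduction of a separating vector to some $\bar d\in C^-$ with $\sigma_A(\bar d)<0$. There is one slip: under GACQ one has $G'(\bar{x})\subseteq T(\bar{x};S)$, so $G'(\bar{x})$ is the smaller cone (in fact the two coincide). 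Note also that $\Phi\le\sigma_A$ is trivial, because $-h'_{0x}(\bar{x},v_0;d)\le-\langle\eta,d\rangle$ for \emph{every} $\eta\in\partial_T^xh_0(\bar{x},v_0)$, so no maximizers are needed.

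The whole proof therefore rests on the closedness of $A$, and that is a genuine gap. Lemma~\ref{00631} (really $A_2$) gives boundedness, $A_1$ makes $V_0(\bar{x})$ compact, and the u.s.c.\ half of $A_4$ gives $v\mapsto\partial_T^xg_0(\bar{x},v)$ a closed graph. For $h_0$, however, $A_4$ only gives \emph{lower} semicontinuity of $v\mapsto h'_{0x}(\bar{x},v;d)$. That does not make the graph of $v\mapsto\partial_T^xh_0(\bar{x},v)$ closed. The argument of Theorem~\ref{601} cannot supply it either, since it passes from $\xi\in\partial_T^xg-\partial_T^xh$ to $\langle\xi,d\rangle\le g'_x-h'_x$, which is not valid in general.

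A concrete instance: take $n=1$, $V_0=[0,1]$, $g_0(x,v)=x$, and $h_0(x,v)=v(1-v)(1-e^{-x/v})$ for $x\ge0,\ v>0$, with $h_0=0$ otherwise. Then $A_1$--$A_4$ hold and $V_0(0)=[0,1]$. But $\partial_T^xh_0(0,v)=[0,1-v]$ for $v>0$ and $\{0\}$ for $v=0$, so $A=(0,1]$ is not closed. (Here $\bar x=0$ is not a minimizer, so this refutes only your compactness claim, not the theorem.) Without closedness, $0$ could lie in $\cl(A+\cl C)\setminus(A+\cl C)$, strict separation is unavailable, and your argument only yields $0\in\cl A+\cl C$.

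The repair is to freeze $h_0$ along a \emph{continuous} selection instead of along $\bar d$-dependent maximizers. The function $h'_{0x}(\bar{x},\cdot\,;d)$ is l.s.c.\ for every $d$, and all sets $\partial_T^xh_0(\bar{x},v)$ lie in a fixed ball. A routine separation argument then shows that $v\mapsto\partial_T^xh_0(\bar{x},v)$ is inner semicontinuous on the compact metric space $V_0(\bar{x})$. Michael's selection theorem therefore gives a continuous $q_v\in\partial_T^xh_0(\bar{x},v)$.

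Set $A_q:=\co\bigcup_{v\in V_0(\bar{x})}(\partial_T^xg_0(\bar{x},v)-q_v)\subseteq A$. It is compact, being the convex hull of a continuous image of the compact graph of $\partial_T^xg_0(\bar{x},\cdot)$ over $V_0(\bar{x})$. Moreover $\Phi(d)\le\sup_{v}(g'_{0x}(\bar{x},v;d)-\langle q_v,d\rangle)\le\sigma_{A_q}(d)$ for all $d$. Running your separation argument with $A_q+\cl C$ then gives $0\in A_q+\cl C\subseteq A+\cl C$. With this fix your proof is more elementary than the paper's: it needs no exact penalty, no limiting calculus, and only a single convex separation. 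It also makes explicit the closedness issue that the paper's proof leaves hidden inside Theorem~\ref{601}.
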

\begin{proof}
It is a simple matter to see that the local optimality of $\bar{x}$ yields $\psi'_0(\bar{x};d)\geq0$ for all $d\in T(\bar{x};S).$ Therefore $\bar{d}=0$ is a global minimizer of the following problem:
\begin{align}
&\min ~~~\tilde{\psi_0}(d) \label{0001'}\\
&~\mbox{s.t.}~~d\in T(\bar{x};S), \nonumber
\end{align}
where $\tilde{\psi_0}(d):=\psi'_0(\bar{x};d).$ Considering $K>0$ sufficiently large enough, (\ref{0001'}) is also equivalent to the following unconstrained optimization problem:
\begin{equation}\label{0002}
\underset{d\in\mathbb{R}^n}{\min}~~~\tilde{\psi_0}(d)+Kd_{T(\bar{x};S)}(d).
\end{equation}
Now due to Theorem \ref{621}, we get

\begin{align*}
0 & \in\partial_L(\tilde{\psi_0}+Kd_{T(\bar{x};S)})(0)\\
& \subseteq\partial_L\tilde{\psi_0}(0)+K\partial_L d_{T(\bar{x};S)}(0)\\
& \subseteq\co\underset{v_0\in V_0(\bar{x})}{\bigcup}\Big(\partial\tilde{g_0}(0,v_0)-\partial\tilde{h_0}(0,v_0)\Big)+K\partial d_{G'(\bar{x})}(0)\\
& =\co\underset{v_0\in V_0(\bar{x})}{\bigcup}\Big(\partial_T^x{g_0}(\bar{x},v_0)-\partial_T^x{h_0}(\bar{x},v_0)\Big)+K(N(0;G'(\bar{x}))\cap\mathbb{B}),
\end{align*}
where the last equality is due to Theorem \ref{621} and \cite[Theorem 1.33]{boris4}.

To obtain the desired result, we assume that $\xi\in N(0;G'(\bar{x}))\cap\mathbb{B}.$ Hence, $\xi\in N(0;G'(\bar{x}))$ with $||\xi||\leq1.$ Now by using the similar arguments as in \cite[Theorem 4.1(iii)]{MMN1} and \cite[Theorem 3(iii)]{p2}, it follows that
\[\begin{array}{ll}
N(0;G'(\bar{x}))=(G'(\bar{x}))^-&=\Big(\underset{j\in J(\bar{x})}{\bigcap}\big(\partial_T\psi_j(\bar{x})\big)^-\Big)^-\\
 & =\Big(\underset{j\in J(\bar{x})}{\bigcup}\partial_T\psi_j(\bar{x})\Big)^{--}\\
 & =\cl\Big(\mathbb{R}_+(\co\underset{v_j\in V_j(\bar{x})}{\bigcup}\partial_T^xg_j(\bar{x},v_j))\Big)\\
 & =\cl\Big(\underset{{\begin{array}{l}
 \scriptstyle{\quad v_{ij}\in V_j(\bar{x}) }\\
   \scriptstyle{\lambda_{ij}\geq0,\ j\in J(\bar{x})}\\
   \scriptstyle{\qquad l\in\mathbb{N}}\\
  \end{array}}}{\bigcup}
  \sum_{i=1}^l
  \lambda_{ij}\partial_T^xg_j(\bar{x},v_{ij})\Big),
\end{array}\]
where the forth equality is due to \cite[Theorem 33.14]{Borwein} and \cite[Theorem 2]{p2}. 
This yields inclusion (\ref{0702}) and the proof of the theorem is completed.
\qed
 \end{proof}
We conclude this subsection with some illustrative examples to ensure the validity of the results.
The next example investigates the optimality conditions under GEBCQ and GACQ.
 \begin{example}
 Consider the following robust programming problem:
 \begin{eqnarray*}
    \begin{array}{lr}
 \min\hspace{.7cm}  \psi_0(x)=\underset{v_0\in V_0}\max{}~g_0(x,v_0)-h_0(x,v_0) \\
  \ \mbox{s.t.}\hspace{.75cm}\psi_j(x)=\underset{v_j\in V_j}{\max}~g_j(x,v_j)\leq0,~j=1,2, \\
     \hspace{1.5cm} x=(x_1,x_2)\in\mathbb{R}^2,
   \end{array}
  \end{eqnarray*}
 where
 \begin{eqnarray*}
    \begin{array}{lr}
 \hspace{.7cm}  g_0(x,v_0):=|x_1|,~h_0(x,v_0):=||(v_{01},v_{02})||x_1\cos{|x_2|},\ \forall v_0=(v_{01},v_{02})\in V_0, \\
 \hspace{.75cm} g_1(x,v_{1}):=2|v_{11}v_{12}x_1|^3-x_2\leq0,\ \forall v_1=(v_{11},v_{12})\in V_1,\\
  \hspace{.75cm} g_2(x,v_2):=-(v_{21}+1)x_1^2+v_{22}|x_2|\leq0,\ \forall v_2=(v_{21},v_{22})\in V_2.
   \end{array}
  \end{eqnarray*}
with
  $V_0:=\big\{v_0=(v_{01},v_{02})\in\mathbb{R}^2\ |\  v_{01}^2+v_{02}^2\leq1,\ v_{01}v_{02}\geq0\big\},~V_1:=\big\{v_1=(v_{11},v_{12})\in\mathbb{R}^2\ |\ v_{11}^2+v_{12}^2\leq1 , v_{11}v_{12}\geq0\big\}$ and $V_2:=[0,1]\times[0,1].$
 It is a simple matter to see that
\[
 \psi_1(x):=\max_{v_1\in V_1}\ g_1(x,v_1)=
\left\{
\begin{array}{ll}
|x_1|^3-x_2, &\  x_1\neq0\\
-x_2,  & \  x_1=0,
\end{array} \right.
 \]
 \[
\psi_2(x):=\max_{v_2\in V_2}\ g_2(x,v_2)=
\left\{
\begin{array}{ll}
0, & \ x_1=0,x_2=0\\
|x_2|,  & \ x_1 = 0,x_2\neq0\\
-x_1^2,    & \ x_1\neq0,x_2=0\\
-x_1^2+|x_2|, & \ x_1\neq0, x_2\neq0.
\end{array} \right.
 \]
 Furthermore,
 \[
V_1(x)=
\left\{
\begin{array}{ll}
\{(\frac{-1}{\sqrt{2}},\frac{-1}{\sqrt{2}}),(\frac{1}{\sqrt{2}},\frac{1}{\sqrt{2}})\}, & \ x_1\neq0\\
V_1,  &\ x_1=0,
\end{array} \right.
 \]
 \[
V_2(x)=V_{21}(x)\times V_{22}(x),
\]
where
 \[
V_{21}(x)=
\left\{
\begin{array}{ll}
[0,1], & \ x_1=0\\
\{0\},  &\ x_1\neq0,
\end{array} \right.
\qquad
V_{22}(x)=
\left\{
\begin{array}{ll}
[0,1], & \ x_2=0\\
\{1\},  &\ x_2\neq0.
\end{array} \right.
 \]
Clearly, $\bar{x}=(0,0)$ is an optimal solution of the problem. Furthermore, for all $d=(d_1,d_2)\in\mathbb{R}^2,$ one has
$\psi'_1(\bar{x};d)=-d_2$ and
  $\psi'_2(\bar{x};d)=|d_2|.$
 Thus
 $ \partial_T\psi_1(\bar{x})=\{(0,-1)\}$ and
  $\partial_T\psi_2(\bar{x})=\{0\}\times[-1,1].$
  A direct computation gives us $G'(\bar{x})=T(\bar{x};S)=\mathbb{R}\times\{0\}.$ This equality especially implies GACQ at $\bar{x}.$ Moreover, GEBCQ is satisfied at $\bar{x}$ with $\sigma=1.$
 Further, one \r{that} can easily find a positive scalar $\delta$ such that for each $x\in\mathbb{B}(\bar{x};\delta),~\cos|x_2|>0,$ 
 \[
\psi_0(x)=
\left\{
\begin{array}{ll}
x_1, & \ x_1\geq0 \\
-x_1-x_1\cos|x_2|, & \ x_1<0, 
\end{array} \right.
 \]
and 
  \[
V_0(x)=
\left\{
\begin{array}{ll}
V_0, & \ x_1\geq0 \\
\{(v_{01},v_{02})\in V_0~|~||(v_{01},v_{02})||=1,~v_{01}v_{02}\geq0\}, & \ x_1<0.
\end{array} \right.
 \] 
Thus
$
\psi'_0(\bar{x},d)=
\max\big\{-2d_1,d_1\big\}
 $ 
 and
 $\partial_L\psi_0(\bar{x})=[-2,1]\times\{0\}.$
 Moreover, 
 \[
g_0(x,v_0)=
\left\{
\begin{array}{ll}
x_1, & \ x_1\geq0\\
-x_1,  &\ x_1<0,
\end{array} \right.
\qquad
h_0(x,v_0)=
\left\{
\begin{array}{ll}
0, & \ x_1\geq0\\
x_1\cos|x_2|,  &\ x_2<0.
\end{array} \right.
 \]
 Hence, 
 \[
 \partial_T^xg_0(\bar{x};v_0)-\partial_T^xh_0(\bar{x};v_0)=\co\big\{(-2,0)\big\}\cup\big\{(1,0)\big\}=[-2,1]\times\{0\}.
 \]
Clearly,
\begin{align*}
0\in & \binom{\xi_0}{0}+\lambda_1\binom{0}{-1}+\lambda_2\binom{0}{\xi_2}\\
\in & \co\underset{v_0\in V_0(\bar{x})}{\bigcup}\Big(\partial_T^xg_0(\bar{x},v_0)-\partial_T^xh_0(\bar{x},v_0)\Big)+\underset{j\in J(\bar{x})}{\sum}\sum_{i=1}^{n+1}
  \lambda_{ij}~\partial_T^xg_j(\bar{x},v_{ij}),
  \end{align*}
    where  $v_{ij}\in V_j(\bar{x}),~ \lambda_{ij}\geq0,~,\sum_{i=1}^{n+1}\lambda_{ij}\in[0,\sigma K].$ 
where  $\xi_0\in[-2,1],~ \xi_2\in[-1,1],$ and $\lambda_1,\lambda_2\geq0,$ which implies the validity of inclusions (\ref{702}) and (\ref{0702}).
\end{example}
 The last example of this part illustrates, the closure in inclusion (\ref{0702}) cannot be omitted.
\begin{example}\label{EX1}
  Consider the following robust optimization problem:
  \begin{eqnarray*}
    \begin{array}{lr}
  \min\hspace{1cm}  \displaystyle (\cos v_{01}v_{02})x_2^2-e^{\sin{|x_1|}},\ \forall v_0=(v_{01},v_{02})\in V_0 \\
  \ \mbox{s.t.}\hspace{1.2cm} x\in S,
   \end{array}
  \end{eqnarray*}
  where $S$ is the feasible set defined in Example \ref{exa003}.
  Taking $g_0(x,v_0):=(\cos v_{01}v_{02})x_2^2$ and $h_0(x,v_0):=e^{\sin{|x_1|}}$ for all $v_0=(v_{01},v_{02})\in V_0,$ then we get 
\[
 \psi_0(x):=\max_{v_0\in V_0}\ g_0(x,v_0)-h_0(x,v_0)=
\left\{
\begin{array}{ll}
x_2^2-e^{\sin{|x_1|}}, &\  x_2\neq0\\
-e^{\sin{|x_1|}},  & \ x_2=0,
\end{array} \right.
 \]
and
 \[
V_0(x)=\left\{
\begin{array}{ll}
\{(v_{01},v_{02})\in\mathbb{R}^2\ |\ v_{01}v_{02}=0\}, &\  x_2\neq0\\
V_0,  &\  x_2=0.
\end{array} \right.
 \]
 Following Example \ref{exa003}, GACQ satisfies at the local optimal point $\bar{x}=(0,0).$
  It is also easy to see that
  $\hat{\partial}{\psi_0}(\bar{x})=[-1,1]\times\{0\},\ \partial_T\psi(\bar{x})=\mathbb{B}+(0,-1)$
  and
  \[\bigcup_{\lambda\geq0}\lambda\partial_T\psi(\bar{x})=\{x\ |\ x_2<0\},\]
  which yields
  \[0\in\hat{\partial}\psi_0(\bar{x})+\cl\bigcup_{\lambda\geq0}\lambda\partial_T\psi(\bar{x}).\]
  Meanwhile,
 \[
 0\notin\hat{\partial}{\psi_0}(\bar{x})+\bigcup_{\lambda\geq0}\lambda\partial_T\psi(\bar{x}),\]
  which is because GEBCQ is not satisfied at $\bar{x}.$
Thus the closure in inclusion (\ref{0702}) cannot be omitted.

\end{example}

\subsection{ Problem with DTC/supremum objective function}\label{sec1}
In the second part of Section \ref{OPT}, we consider problem $RP_S$
with $\psi_0(x):=G(x)-H(x),$ where $G,H:\mathbb{R}^n\rightarrow\mathbb{R}\cup\{+\infty\}$ are given by $ G(x):=\underset{{v_1\in V_1}}{\sup}g(x,v_1),$ and $H(x):=\underset{{v_2\in V_2}}{\sup}h(x,v_2),$ and, $V_i\subseteq\mathbb{R}^{q_i},~q_i\in\mathbb{N},~i=1,2$ are compact subsets.
Similar to previous sections and in order to get  the desired results, we put some assumptions on the functions.
\begin{enumerate}
   \item [$(B_1)$]  $g,h$ are u.s.c. at each $(x,v_i)\in\mathbb{R}^n\times V_i,~i=1,2.$ 
    \item [$(B_2)$] $g$ and $h$ are uniformly locally Lipschitz  for $v_i\in V_i,~i=1,2$ with positive constants $K_g$ and $K_h.$ 
   \item [$(B_3)$] For each $x\in\mathbb{R}^n,~g$ and $h$ are  tangentially convex functions w.r.t. $x.$
   \item [$(B_4)$] For each $(x,d)\in\mathbb{R}^n\times\mathbb{R}^n,$ the mappings $v_1\mapsto g'_x(x,v_1;d)$ and  $v_2\mapsto h'_x(x,v_2;d)$ are u.s.c. at each $v_i\in V_i,~i=1,2.$
 \end{enumerate}
To prove the optimality conditions for $RP_S$
, it is needed to recall the definition of an isolated local minimizer from \cite{st}.
\begin{definition}
 A feasible point $\bar{x}\in S$ is called an isolated local minimum for $RP_S$ if there exist positive scalars $\varepsilon$ and $\delta$ such that for each $x\in\mathbb{B}(\bar{x};\delta)\cap S$ one has
\[G(x)-H(x)\geq G(\bar{x})-H(\bar{x})+\varepsilon||x-\bar{x}||.\]
\end{definition}
Obviously such a property is stronger than the local optimality. So, it is natural to expect that this property can lead to  stronger stationary notions.
According to \cite{p3}, a feasible point $\bar{x}\in S$ is an isolated local minimizer of problem $RP_S$ if and only if there is a positive number  $ \varepsilon $ such that  $\bar{x}$ is a local minimizer of the following DTC problem:
\begin{eqnarray*}
  \begin{array}{lr}
 \min\hspace{.9cm}
 G(x)-H_{\varepsilon}(x)\hspace{2cm}(RP_{\varepsilon}) \\
 \hspace{.1cm}\mbox{s.t.}\hspace{1cm} x\in S,
  \end{array}
\end{eqnarray*}
where $H_{\varepsilon}(x):=H(x)+\varepsilon||x-\bar{x}||.$
Obviously one has
$\partial_TH_{\varepsilon}(\bar{x})=\partial_TH(\bar{x})+\varepsilon\mathbb{B}.$
Let us now introduce our stationary notions for DTC problem $RP_S.$
Following \cite{p3}, we say that a feasible point $\bar{x}\in S$ is a
\begin{enumerate}
  \item[$\bullet$]  B-stationary point for $RP_S$ if for each $d\in T(\bar{x};S),\ G'(\bar{x};d)\geq H'(\bar{x};d).$
  \item[$\bullet$]  Fr\'{e}chet-inf-stationary (F-inf-stationary) point for $RP_S$ if
  \[\partial_TH(\bar{x})\subseteq\partial_TG(\bar{x})+\hat{N}(\bar{x};S).\]
  \item[$\bullet$] Strong B-stationary point for $RP_S$ if there exists a positive number $\varepsilon$ such that for each $d\in T(\bar{x};S):$
      \[G'(\bar{x};d)\geq H'(\bar{x};d)+\varepsilon||d||.\]
  \item[$\bullet$] Strong F-inf-stationary point for $RP_S$ if
  \[\partial_TH(\bar{x})\subseteq\Int(\partial_TG(\bar{x})+\hat{N}(\bar{x};S)).\]
\end{enumerate}
In the case that $S=\mathbb{R}^n$ and $k$ and $h$ are convex functions, the notion of F-inf-stationary reduces to the so-called inf-stationarity from  \cite{Joki}. We first try to establish some results for $RP_S$ in the simple case where $S=\mathbb{R}^n.$ To this end, we consider the following unconstrained problem $RP_2:$
\begin{eqnarray*}
  \begin{array}{lr}
 \underset{x\in\mathbb{R}^n}{\min}\hspace{.9cm}
 \psi_0(x)=g(x)-H(x)\hspace{1cm}(RP_2) 
  \end{array}
\end{eqnarray*}
The next theorem shows the relationship between local isolated optimality and strong inf-stationarity of a feasible point of $RP_2.$ 
\begin{theorem}{\cite{p3}}\label{501}
$\bar{x}\in\mathbb{R}^n$ is an isolated local minimum point for $RP_2$ if and only if it is a strong inf-stationary point. 
\end{theorem}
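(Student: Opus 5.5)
The strategy is to pass through the intermediate notion of strong B-stationarity. I will show that an isolated local minimum is the same as a strong B-stationary point, and that strong B-stationarity is the same as strong inf-stationarity. Here $S=\mathbb{R}^n$, so $T(\bar x;S)=\mathbb{R}^n$ and $\hat N(\bar x;S)=\{0\}$. Strong inf-stationarity therefore reads $\partial_TH(\bar x)\subseteq \Int \partial_Tg(\bar x)$. Throughout I use that $g$ and $H$ are tangentially convex at $\bar x$ and locally Lipschitz (by $(B_2)$, $(B_3)$ and Theorem \ref{l1}). Hence $g'(\bar x;\cdot)$ and $H'(\bar x;\cdot)$ are finite sublinear functions and are the support functions of the compact convex sets $\partial_Tg(\bar x)$ and $\partial_TH(\bar x)$.

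\emph{Isolated minimum $\Rightarrow$ strong B-stationarity.} Take $\varepsilon,\delta$ from the definition, fix $d$, and put $x=\bar x+td$ for small $t>0$. Then
$$\frac{(g-H)(\bar x+td)-(g-H)(\bar x)}{t}\ge \varepsilon\|d\|,$$
and letting $t\downarrow0$ gives $g'(\bar x;d)\ge H'(\bar x;d)+\varepsilon\|d\|$.

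\emph{Strong B-stationarity $\Rightarrow$ isolated minimum.} This is the delicate direction, because directional differentiability alone gives no uniformity in $d$. I will use Lipschitz continuity near $\bar x$ to upgrade directional derivatives to Hadamard (uniform on compact sets of directions) derivatives. Argue by contradiction. Suppose $x_k\to\bar x$ with $(g-H)(x_k)<(g-H)(\bar x)+\frac{\varepsilon}{2}\|x_k-\bar x\|$. Write $x_k=\bar x+t_kd_k$ with $\|d_k\|=1$, and pass to a subsequence so that $d_k\to d$. The Lipschitz estimate lets me replace $d_k$ by $d$ at cost $o(t_k)$. This is exactly the computation in the proof of Lemma \ref{lem2}. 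Dividing by $t_k$ gives $g'(\bar x;d)-H'(\bar x;d)\le \varepsilon/2<\varepsilon$, which contradicts strong B-stationarity at the unit vector $d$. So $\bar x$ is an isolated local minimum with constant $\varepsilon/2$.

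\emph{Strong B-stationarity $\Leftrightarrow$ strong inf-stationarity.} In support function language, strong B-stationarity says $\sigma_{\partial_TH(\bar x)+\varepsilon\mathbb{B}}\le\sigma_{\partial_Tg(\bar x)}$. Both sets are closed and convex, so by the standard comparison of support functions (H\"ormander) this is equivalent to
$$\partial_TH(\bar x)+\varepsilon\mathbb{B}\subseteq\partial_Tg(\bar x).$$
This inclusion implies $\partial_TH(\bar x)\subseteq\Int\partial_Tg(\bar x)$. For the converse, the set $\partial_TH(\bar x)$ is compact (it is bounded by the earlier lemma and closed). A compact set contained in an open set lies at positive distance from the complement. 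Hence some $\varepsilon>0$ satisfies $\partial_TH(\bar x)+\varepsilon\mathbb{B}\subseteq \partial_Tg(\bar x)$, and reading this back through support functions gives strong B-stationarity.

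The main obstacle is the second step, the uniformity argument that turns a directional inequality into a local growth estimate. It relies essentially on the local Lipschitz property of $g-H$. This is why the assumptions $(B_2)$ are needed, and the proof fails without them, as the example after Lemma \ref{lem2} shows.
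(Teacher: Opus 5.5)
Your proof is correct. It also goes further than the paper, which gives no argument of its own: Theorem~\ref{501} is simply imported from \cite{p3}.

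The only traces of a proof in the paper are two unjustified steps. The first is the reformulation of isolated minimality as ordinary local minimality of $G-H_\varepsilon$, where $\partial_TH_\varepsilon(\bar x)=\partial_TH(\bar x)+\varepsilon\mathbb{B}$. The second is the claim, in the proof of Theorem~\ref{504}, that strong inf-stationarity is equivalent to $\partial_TH(\bar x)+\varepsilon\mathbb{B}\subseteq\partial_TG(\bar x)$ for some $\varepsilon>0$.

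Your route through strong B-stationarity supplies both missing pieces and is self-contained:
\begin{itemize}
\item The support-function comparison, combined with compactness of $\partial_TH(\bar x)$ to get a uniform $\varepsilon$-margin, justifies exactly the equivalence used in Theorem~\ref{504}.
\item The Hadamard-type argument, replacing $d_k$ by $d$ at cost $o(t_k)$ via the Lipschitz constant, shows that only the sufficiency direction needs local Lipschitz continuity of $G-H$, which $(B_2)$ provides. The bare citation hides this.
\end{itemize}

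For the necessity direction, the paper's $RP_\varepsilon$ device suggests an alternative. One would apply the first-order condition $\partial_TH_\varepsilon(\bar x)\subseteq\partial_TG(\bar x)$ at a local minimizer of $G-H_\varepsilon$. In substance this is the same difference-quotient limit you take, just repackaged.

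One small correction to your closing remark. The example after Lemma~\ref{lem2} is not by itself a counterexample to sufficiency, but it yields one after adding a norm term. Take $f$ from that example, $G=f+2\|\cdot\|$ and $H=0$. Then $G'(0;d)=d_1+2\|d\|\geq\|d\|$, so $0$ is strongly inf-stationary, yet $G(-\tfrac1k,\tfrac1{k^4})\to-\infty$.
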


In the following, we introduce a new concept to approximate the tangential subdifferential.
\begin{definition}
Consider  $f:\mathbb{R}^n\rightarrow\mathbb{R}\cup\{+\infty\}$ is a   tangentially convex function at $\bar{x}\in\dom f.$  For $\varepsilon\geq0,$ the $\varepsilon$-T-subdifferential (or approximate tangential subdifferential) of $f$ at  $\bar{x}$ is given by
\begin{equation*}
\partial_T^{\varepsilon}f(\bar{x}):=\Big\{\xi\in\mathbb{R}^n~|~\langle\xi,d\rangle\leq f'(\bar{x};d)+\varepsilon||d||,~\forall d\in\mathbb{R}^n\Big\}.
\end{equation*}
\end{definition}
The special case $\varepsilon=0$ yields the classical tangential subdifferential denoted by $\partial_Tf(\bar{x}).$
The next result provides us with a relationship between $\varepsilon-$T-subdifferential and tangential subdifferential of a function.
\begin{proposition}\label{502}
Suppose that $f:\mathbb{R}^n\rightarrow\mathbb{R}\cup\{+\infty\}$ is tangentially convex at $\bar{x}\in\dom f.$ Then for all $\varepsilon\geq0,$ one has 
\[
\partial_T^{\varepsilon}f(\bar{x})=\partial_Tf(\bar{x})+\varepsilon\mathbb{B}.
\]
\end{proposition}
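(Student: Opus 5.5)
We prove the two inclusions separately, using that $f'(\bar{x};\cdot)$ is the support function of $\partial_Tf(\bar{x})$ and that $\varepsilon\|\cdot\|$ is the support function of $\varepsilon\mathbb{B}$.

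\emph{The inclusion $\supseteq$.} Take $\xi=\eta+\varepsilon b$ with $\eta\in\partial_Tf(\bar{x})$ and $\|b\|\leq1$. For every $d\in\mathbb{R}^n$ we have $\langle\eta,d\rangle\leq f'(\bar{x};d)$ by definition, and $\langle\varepsilon b,d\rangle\leq\varepsilon\|d\|$ by Cauchy--Schwarz. Adding the two inequalities gives $\xi\in\partial_T^{\varepsilon}f(\bar{x})$.

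\emph{The inclusion $\subseteq$.} Here I would use a separation argument.

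First, $\partial_Tf(\bar{x})$ is nonempty, closed, convex and bounded; boundedness is the earlier lemma quoted from \cite{MMN1}. Hence it is compact, and the set $C:=\partial_Tf(\bar{x})+\varepsilon\mathbb{B}$ is a compact convex set.

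Suppose $\xi\in\partial_T^{\varepsilon}f(\bar{x})$ but $\xi\notin C$. Strict separation of the point $\xi$ from $C$ gives a $d\neq0$ with
\[
\langle\xi,d\rangle>\sup_{c\in C}\langle c,d\rangle=\sigma_{\partial_Tf(\bar{x})}(d)+\varepsilon\|d\|.
\]
The support function of a Minkowski sum is the sum of the support functions, which gives the equality on the right.

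Since $f$ is tangentially convex, $f'(\bar{x};\cdot)$ is a finite sublinear function. By Remark \ref{lltc}, or directly from the displayed max formula, $f'(\bar{x};d)=\sigma_{\partial_Tf(\bar{x})}(d)$. Therefore $\langle\xi,d\rangle>f'(\bar{x};d)+\varepsilon\|d\|$, which contradicts $\xi\in\partial_T^{\varepsilon}f(\bar{x})$.

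An equivalent route avoids the explicit contradiction. The function $p(d):=f'(\bar{x};d)+\varepsilon\|d\|$ is finite and sublinear, and $\partial_T^{\varepsilon}f(\bar{x})=\partial p(0)$. The convex subdifferential sum rule (valid because both functions are finite and convex) gives $\partial p(0)=\partial\tilde f(0)+\varepsilon\partial\|\cdot\|(0)=\partial_Tf(\bar{x})+\varepsilon\mathbb{B}$, using Remark \ref{lltc} once more.

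\emph{Main obstacle.} The only delicate point is the identity $f'(\bar{x};\cdot)=\sigma_{\partial_Tf(\bar{x})}$, i.e. that the maximum in the formula stated after the definition of $\partial_T$ is attained and equals $f'(\bar{x};\cdot)$. This holds because a finite convex positively homogeneous function on $\mathbb{R}^n$ is continuous, so it is the support function of its subdifferential at $0$. Once this is in place, closedness of $C$, needed for the separation, follows from the compactness of $\partial_Tf(\bar{x})$. The case $\varepsilon=0$ is trivial.
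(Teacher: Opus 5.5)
Your proposal is correct and follows essentially the same route as the paper: the easy inclusion by Cauchy--Schwarz, and the reverse inclusion by strictly separating $\xi$ from $\partial_Tf(\bar{x})+\varepsilon\mathbb{B}$, then using that $f'(\bar{x};\cdot)$ and $\varepsilon\|\cdot\|$ are the support functions of the two summands. You also supply details the paper leaves implicit, namely the compactness of $\partial_Tf(\bar{x})$ (hence the closedness of the sum needed for strict separation) and the attainment of the maximum in the support-function formula.
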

\begin{proof}
Obviously, $\partial_Tf(\bar{x})+\varepsilon\mathbb{B}\subseteq\partial_T^{\varepsilon}f(\bar{x}).$ To prove the converse inclusion, suppose by contradiction that there exists some $\hat{\xi}\in\partial_T^{\varepsilon}f(\bar{x})\setminus\big(\partial_Tf(\bar{x})+\varepsilon\mathbb{B}\big).$  Hence, one can find a nonzero vector $d\in\mathbb{R}^n$ such that 
$\langle\hat{\xi},d\rangle>\underset{\eta\in\partial_Tf(\bar{x})+\varepsilon\mathbb{B}}{\sup}\langle\eta,d\rangle$
 or
\[
\langle\hat{\xi},d\rangle>\underset{{\begin{array}{l}
 \scriptstyle{~ \eta_1\in\partial_Tf(\bar{x}) }\\
   \scriptstyle{~\eta_2\in\varepsilon\mathbb{B}}\\
  \end{array}}}{\sup}\langle\eta_1,d\rangle+\langle\eta_2,d\rangle.
\] 
Moreover, one can find some $\eta'_1\in\partial_Tf(\bar{x})$ 
such that $\langle\eta'_1,d\rangle=f'(\bar{x};d).$ Now defining $\eta_2:=\varepsilon\frac{d}{||d||}$ implies that $\langle\hat{\xi},d\rangle>f'(\bar{x};d)+\varepsilon||d||.$ Clearly this yields $ \hat{\xi}\notin\partial_T^{\varepsilon}f(\bar{x})$ and  completes the proof.
\qed
\end{proof}
The next result establishes necessary and sufficient optimality conditions for strong inf-stationarity.
\begin{proposition}\label{503}
$\bar{x}\in\mathbb{R}^n$ is a strong inf-stationary point for $RP_2$ if and only if there exists a positive scalar $\varepsilon$ such that for each 
$\varepsilon'<\varepsilon,
\
\partial_T^{\varepsilon'}H(\bar{x})\subseteq\partial_TG(\bar{x}).
$
\end{proposition}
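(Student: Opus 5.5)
The plan is to reduce the statement to an elementary fact about compact convex sets, via Proposition \ref{502}. With $S=\mathbb{R}^n$ we have $\hat{N}(\bar{x};\mathbb{R}^n)=\{0\}$. So strong inf-stationarity of $\bar{x}$ for $RP_2$ reads
\[
\partial_TH(\bar{x})\subseteq\Int\partial_TG(\bar{x}).
\]
By Proposition \ref{502}, for every $\varepsilon'\geq0$ we have $\partial_T^{\varepsilon'}H(\bar{x})=\partial_TH(\bar{x})+\varepsilon'\mathbb{B}$. The claim therefore becomes: $\partial_TH(\bar{x})\subseteq\Int\partial_TG(\bar{x})$ if and only if there is $\varepsilon>0$ with $\partial_TH(\bar{x})+\varepsilon'\mathbb{B}\subseteq\partial_TG(\bar{x})$ for all $\varepsilon'<\varepsilon$. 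We only need $\varepsilon'\geq0$ here, since $\partial_T^{\varepsilon'}$ is defined for $\varepsilon'\geq0$.

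For sufficiency, assume such an $\varepsilon$ exists and fix any $\varepsilon'\in(0,\varepsilon)$. Take $\xi\in\partial_TH(\bar{x})$. Then the closed ball $\xi+\varepsilon'\mathbb{B}$ lies in $\partial_TG(\bar{x})$. Hence $\xi$ is an interior point of $\partial_TG(\bar{x})$, which gives $\partial_TH(\bar{x})\subseteq\Int\partial_TG(\bar{x})$.

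For necessity, assume $\partial_TH(\bar{x})\subseteq\Int\partial_TG(\bar{x})$. The set $\partial_TH(\bar{x})$ is nonempty and closed, and it is bounded by the boundedness lemma recalled from \cite{MMN1}, so it is compact. The set $C:=\mathbb{R}^n\setminus\Int\partial_TG(\bar{x})$ is closed, and it is nonempty because $\partial_TG(\bar{x})$ is bounded. The function $\xi\mapsto d_C(\xi)$ is continuous and strictly positive on $\partial_TH(\bar{x})$, since each such $\xi$ lies in the open set $\Int\partial_TG(\bar{x})$. By compactness it attains a minimum $\varepsilon>0$ there.

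Now let $\varepsilon'<\varepsilon$, $\xi\in\partial_TH(\bar{x})$ and $b\in\mathbb{B}$. Then $\|(\xi+\varepsilon'b)-\xi\|\leq\varepsilon'<d_C(\xi)$, so $\xi+\varepsilon'b\notin C$. Thus
\[
\partial_T^{\varepsilon'}H(\bar{x})=\partial_TH(\bar{x})+\varepsilon'\mathbb{B}\subseteq\Int\partial_TG(\bar{x})\subseteq\partial_TG(\bar{x}).
\]

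The only step needing care is the positivity of this uniform margin $\varepsilon$. It relies on the compactness of $\partial_TH(\bar{x})$, which comes from tangential convexity of $H$ through the cited boundedness lemma together with closedness, and on the closedness of $C$. Everything else is a direct rewriting through Proposition \ref{502}. Combined with Theorem \ref{501}, the statement also characterizes isolated local minima of $RP_2$.
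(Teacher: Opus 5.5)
Your proof is correct. The paper gives no argument (it calls the result straightforward and leaves it to the reader), and your reduction through Proposition~\ref{502} to the fact that a compact set $A$ lies in $\Int B$ if and only if $A+\varepsilon'\mathbb{B}\subseteq B$ for all small $\varepsilon'\geq 0$ is evidently the intended route, matching the form $\partial_TH(\bar{x})+\varepsilon\mathbb{B}\subseteq\partial_TG(\bar{x})$ used in the proof of Theorem~\ref{504}; the one nontrivial step, the uniform margin from compactness of $\partial_TH(\bar{x})$ and the distance to the closed set $\mathbb{R}^n\setminus\Int\partial_TG(\bar{x})$, is handled correctly.
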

\begin{proof}
The proof is straightforward and left to the reader.
\qed
\end{proof}
Next we prove necessary and sufficient conditions for isolated local optimality in the unconstrained problem $RP_2.$
\begin{theorem}\label{504}
$\bar{x}$ is an isolated local minimizer for $RP_2$ if and only if there exists some positive $\varepsilon$ such that
\begin{equation}\label{505}
\co\underset{v_2\in V_2(\bar{x})}{\bigcup}\Big(\partial_T^xh(\bar{x},v_2)+\varepsilon\mathbb{B}\Big)\subseteq\co\underset{v_1\in V_1(\bar{x})}{\bigcup}\partial_T^xg(\bar{x},v_1),
\end{equation}
where
\begin{equation}\label{506}
V_1(\bar{x}):=\Big\{v_1\in V_1\ |\ G(\bar{x})=g(\bar{x},v_1)\Big\},
\end{equation}
and
\begin{equation}\label{507}
V_2(\bar{x}):=\Big\{v_2\in V_2\ |\ H(\bar{x})=h(\bar{x},v_2)\Big\}.
\end{equation}
\end{theorem}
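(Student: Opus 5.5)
The plan is to chain three earlier results. First, Theorem \ref{501} characterizes isolated local minimality via strong inf-stationarity. Next, Proposition \ref{503} (with Proposition \ref{502}) converts strong inf-stationarity into an inclusion of an $\varepsilon$-enlarged tangential subdifferential. Finally, the Danskin-type formula of Theorem \ref{l1}, together with \cite[Theorem 2]{p2}, turns $\partial_T G(\bar{x})$ and $\partial_T H(\bar{x})$ into the convex hulls appearing in (\ref{505}).

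By Theorem \ref{501}, $\bar{x}$ is an isolated local minimizer of $RP_2$ if and only if it is a strong inf-stationary point. Here $S=\mathbb{R}^n$, so $\hat N(\bar{x};S)=\{0\}$ and the interior condition reads $\partial_T H(\bar{x})\subseteq \Int \partial_T G(\bar{x})$. By Proposition \ref{503}, this is equivalent to the existence of $\varepsilon>0$ with $\partial^{\varepsilon'}_T H(\bar{x})\subseteq \partial_T G(\bar{x})$ for all $\varepsilon'<\varepsilon$. By Proposition \ref{502}, $\partial^{\varepsilon'}_TH(\bar{x})=\partial_TH(\bar{x})+\varepsilon'\mathbb{B}$. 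After shrinking $\varepsilon$ (replace it by $\varepsilon/2$), the condition becomes simply $\partial_TH(\bar{x})+\varepsilon\mathbb{B}\subseteq\partial_TG(\bar{x})$ for some $\varepsilon>0$. The converse direction is immediate, since a larger enlargement contains the smaller ones.

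It remains to identify the subdifferentials. Under $(B_1)$–$(B_4)$, $G$ and $H$ are max-functions of the type in Theorem \ref{l1} with $h\equiv0$ (compactness of $V_i$ and upper semicontinuity make the suprema attained). Hence $G'(\bar{x};d)=\max_{v_1\in V_1(\bar{x})}g'_x(\bar{x},v_1;d)$, and \cite[Theorem 2]{p2} (as already used in Corollary \ref{cor01}) gives $\partial_TG(\bar{x})=\co\bigcup_{v_1\in V_1(\bar{x})}\partial_T^xg(\bar{x},v_1)$. The same holds for $H$ with $V_2(\bar{x})$. For the left side of (\ref{505}), I will show
\[
\co\bigcup_{v_2\in V_2(\bar{x})}\bigl(\partial_T^xh(\bar{x},v_2)+\varepsilon\mathbb{B}\bigr)=\Bigl(\co\bigcup_{v_2\in V_2(\bar{x})}\partial_T^xh(\bar{x},v_2)\Bigr)+\varepsilon\mathbb{B}.
\]
This is elementary because $\varepsilon\mathbb{B}$ is convex: a convex combination $\sum\lambda_i(a_i+b_i)$ with $b_i\in\varepsilon\mathbb{B}$ equals $\sum\lambda_ia_i+\sum\lambda_ib_i$, and $\sum\lambda_ib_i\in\varepsilon\mathbb{B}$. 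The reverse inclusion is trivial. Substituting these identities turns the reduced condition into exactly (\ref{505}).

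The main obstacle is the hypotheses rather than the algebra. Assumptions $(B_1)$–$(B_4)$ differ slightly from $(A_1)$–$(A_4)$, for instance in the semicontinuity direction, and I must check that they suffice for \cite[Theorem 1, Theorem 2]{p2} to apply to $g$ and $h$ separately. The key requirements are upper semicontinuity of $g$ in $(x,v)$ and of $v\mapsto g'_x$, which are exactly what these results need for a max-function. I also need to confirm that Theorem \ref{501}, stated in \cite{p3} for DTC objectives, applies to $G-H$. This follows since $G$ and $H$ are tangentially convex by the Danskin formula. I would also verify the $\varepsilon$-shrinking step in Proposition \ref{503} carefully, so that both implications hold with the same form of $\varepsilon$.
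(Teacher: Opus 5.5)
Your proposal is correct and follows the paper's own route. Theorem \ref{501} reduces isolated local minimality to strong inf-stationarity, which is rewritten as $\partial_TH(\bar{x})+\varepsilon\mathbb{B}\subseteq\partial_TG(\bar{x})$, and \cite[Theorem 2]{p2} identifies $\partial_TG(\bar{x})$ and $\partial_TH(\bar{x})$ with the convex hulls in (\ref{505}); your explicit use of Propositions \ref{502}--\ref{503} and of the identity $\co\bigcup_{v_2}(\partial_T^xh(\bar{x},v_2)+\varepsilon\mathbb{B})=\co\bigcup_{v_2}\partial_T^xh(\bar{x},v_2)+\varepsilon\mathbb{B}$ only fills in steps the paper leaves implicit.
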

\begin{proof}
According to Theorem \ref{501}, $\bar{x}$ is an isolated local minimum of $RP_2$ if and only if it satisfies the 
strong inf-stationarity. The later in turn, is equivalent to 
 \begin{equation}\label{eq1*}
\partial_TH(\bar{x})+\varepsilon\mathbb{B}\subseteq\partial_TG(\bar{x}).
\end{equation}
for some $\varepsilon>0.$
Now due to constructed functions $k, h$ and using \cite[Theorem 2]{p2}, it follows that
\[
\partial_TG(\bar{x})=\co\bigcup_{v_1\in V_1(\bar{x})}\partial_T^xg(\bar{x},v_1),
\]
and
\[
\partial_TH(\bar{x})=\co\bigcup_{v_2\in V_2(\bar{x})}\partial_T^xh(\bar{x},v_2),
\]
where $V_1(\bar{x})$ and $V_2(\bar{x})$ are the same as defined in (\ref{506}) and (\ref{507}). Finally, putting all above in (\ref{eq1*}) implies inclusion (\ref{505}) and the proof is completed.
\qed
\end{proof}
In the remainder of this section, we try to establish some new results for our robust set-constrained DTC problem  $RP_S.$
\begin{theorem}
Suppose that $\bar{x}\in S$ is a local minimizer of $RP_S.$ Then 
\begin{equation}\label{0511*}
\co\underset{v_2\in V_2(\bar{x})}{\bigcup}\partial_T^xh(\bar{x},v_2)\subseteq\co\underset{v_1\in V_1(\bar{x})}{\bigcup}\Big(\partial_T^xg(\bar{x},v_1)+K\partial_Ld_S(\bar{x})\Big),
    \end{equation}
 for sufficiently large enough real $K>0.$   
\end{theorem}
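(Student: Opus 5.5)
The plan is to reduce the constrained problem to an unconstrained one by exact penalization, and then to use the DTC structure pointwise, one tangential subgradient of $H$ at a time. By $(B_2)$, $G$ and $H$ are Lipschitz near $\bar{x}$, so $\psi_0=G-H$ is Lipschitz near $\bar{x}$. The exact penalty theorem \cite[Proposition 6.3.2]{Borwein}, used as in the proof of Theorem \ref{621}, then gives a $K>0$ such that $\bar{x}$ is a local minimizer of $G+Kd_S-H$ over $\mathbb{R}^n$.

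First, fix an arbitrary $\xi\in\partial_TH(\bar{x})$. By $(B_1)$--$(B_4)$ and \cite[Theorem 1]{p2}, $H$ is tangentially convex at $\bar{x}$, and it is Lipschitz near $\bar{x}$. Lemma \ref{lem2} therefore gives $\xi\in\hat{\partial}H(\bar{x})$, that is,
\[
H(x)\geq H(\bar{x})+\langle\xi,x-\bar{x}\rangle-o(\|x-\bar{x}\|).
\]
Combining this with local optimality, $G(x)+Kd_S(x)-H(x)\geq G(\bar{x})-H(\bar{x})$ and $d_S(\bar{x})=0$, we get
\[
(G+Kd_S)(x)\geq (G+Kd_S)(\bar{x})+\langle\xi,x-\bar{x}\rangle-o(\|x-\bar{x}\|)
\]
for all $x$ near $\bar{x}$. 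This says exactly that $\xi\in\hat{\partial}(G+Kd_S)(\bar{x})\subseteq\partial_L(G+Kd_S)(\bar{x})$. Since both summands are Lipschitz, the limiting sum rule yields $\xi\in\partial_LG(\bar{x})+K\partial_Ld_S(\bar{x})$.

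Next, Theorem \ref{601} in the special case $h\equiv0$ (Corollary \ref{cor01}) gives
\[
\partial_LG(\bar{x})\subseteq\co\bigcup_{v_1\in V_1(\bar{x})}\partial_T^xg(\bar{x},v_1).
\]
For the left-hand side we use \cite[Theorem 2]{p2}, as in the proof of Theorem \ref{504}, to write
\[
\partial_TH(\bar{x})=\co\bigcup_{v_2\in V_2(\bar{x})}\partial_T^xh(\bar{x},v_2).
\]
To merge the two terms we use the elementary inclusion $\co\big(\bigcup_vA_v\big)+C\subseteq\co\bigcup_v(A_v+C)$, which holds for an arbitrary (not necessarily convex) set $C$. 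Indeed, if $\sum_i\lambda_ia_i$ is a convex combination with $a_i\in A_{v_i}$ and $c\in C$, then $\sum_i\lambda_ia_i+c=\sum_i\lambda_i(a_i+c)$. Taking $C=K\partial_Ld_S(\bar{x})$, and recalling that $\xi\in\partial_TH(\bar{x})$ was arbitrary, we obtain (\ref{0511*}).

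The step I expect to be most delicate is the application of Corollary \ref{cor01} and Lemma \ref{lem2} to $G$ and $H$. Those results were stated under $(A_1)$--$(A_4)$, whereas here we only have $(B_1)$--$(B_4)$. One must check that the hypotheses needed for Danskin's formula \cite[Theorems 1, 2]{p2}, namely upper semicontinuity in $v$, uniform Lipschitz continuity and tangential convexity, are indeed what $(B_1)$--$(B_4)$ supply for each sup-function separately. The Lipschitz property of $H$ is the key ingredient, since it is what makes $\partial_TH(\bar{x})=\hat{\partial}H(\bar{x})$ available. Everything after that is routine.
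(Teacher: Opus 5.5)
Your argument is the paper's proof, step for step. Both use exact penalization, $\partial_TH(\bar x)=\hat{\partial}H(\bar x)$ via Lemma~\ref{lem2}, the Fr\'echet difference rule (which the paper quotes and you prove by hand), the limiting sum rule, Corollary~\ref{cor01}, the Danskin formula and the convex-hull merge. You also correct the paper's $\partial_Td_S$ to $\partial_Ld_S$. The hypothesis check you flag is harmless, since $(B_1)$--$(B_4)$ for $g$ are exactly $(A_1)$--$(A_4)$ for the pair $(g,0)$.

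\textbf{The gap.} The problem, which the paper shares, is the inclusion $\partial_LG(\bar x)\subseteq\co\bigcup_{v_1\in V_1(\bar x)}\partial_T^xg(\bar x,v_1)=\partial_TG(\bar x)$ taken from Theorem~\ref{601}/Corollary~\ref{cor01}. It is false under these hypotheses. Take $n=1$, $V_1=\{0\}$, $g(x,v)=x^2\sin(1/x)$ and $g(0,v)=0$. This $g$ is differentiable everywhere with $|g'(x)|\le 2|x|+1$, so $(B_1)$--$(B_4)$ hold and $\partial_TG(0)=\{0\}$. Yet $G'(x)=2x\sin(1/x)-\cos(1/x)$ clusters at every point of $[-1,1]$ as $x\to0$, so $\partial_LG(0)=[-1,1]$. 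More generally, every everywhere-differentiable Lipschitz function satisfies $(A_1)$--$(A_4)$, while a Lipschitz function has a singleton limiting subdifferential only where it is strictly differentiable. The upper semicontinuity asserted in Lemma~\ref{00601} already fails for this $g$. So your chain only yields $\xi\in\partial_LG(\bar x)+K\partial_Ld_S(\bar x)$, which is weaker than what is claimed.

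\textbf{The repair.} The statement is nevertheless true. Everything up to $\xi\in\hat{\partial}(G+Kd_S)(\bar x)$ can be kept; only the passage to $\partial_LG(\bar x)$ must be replaced, by a linearization at $\bar x$. Test the Fr\'echet inequality along rays $\bar x+td$, $t\downarrow0$. Here $G'(\bar x;d)$ exists, and $\liminf_{t\downarrow0}d_S(\bar x+td)/t\le d_{T(\bar x;S)}(d)$. To see the latter, choose $w\in T(\bar x;S)$ nearest to $d$ and $t_k\downarrow0$, $w_k\to w$ with $\bar x+t_kw_k\in S$; then $d_S(\bar x+t_kd)\le t_k\|d-w_k\|$. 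This gives $\langle\xi,d\rangle\le G'(\bar x;d)+K\,d_{T(\bar x;S)}(d)$ for all $d\in\mathbb{R}^n$.

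Hence $d=0$ globally minimizes $\phi+K\,d_{T(\bar x;S)}$, where $\phi(d):=G'(\bar x;d)-\langle\xi,d\rangle$ is convex and Lipschitz. The Fermat and sum rules, together with Remark~\ref{lltc}, give $\xi\in\partial_TG(\bar x)+K\big(N_L(0;T(\bar x;S))\cap\mathbb{B}\big)$. Now use the standard inclusion $N_L(0;T(\bar x;S))\subseteq N_L(\bar x;S)$ (e.g.\ Rockafellar--Wets, \emph{Variational Analysis}, Prop.~6.27) and $N_L(\bar x;S)\cap\mathbb{B}=\partial_Ld_S(\bar x)$. These give $\xi\in\partial_TG(\bar x)+K\partial_Ld_S(\bar x)$. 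Your Danskin representations and merging inclusion then finish the proof as you planned.
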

\begin{proof}
According to the exact penalty theorem \cite[Proposition 6.3.2]{Borwein}, $\bar{x}$ is a local minimizer of the following unconstrained problem
\begin{eqnarray*}
  \begin{array}{lr}
 \underset{x\in\mathbb{R}^n}{\min}\hspace{.5cm}
 \big(G+Kd_S-H\big)(x)
  \end{array}
\end{eqnarray*}
 for real $K>0$ sufficiently large.
Hence, it is easily seen that all the assumptions of \cite[Theorem 4.3(ii)]{boris01}  are satisfied at $\bar{x}:$ 
\begin{align}
\hat{\partial}H(\bar{x}) & \subseteq\partial_L\big(G+Kd_S\big)(\bar{x})\nonumber\\
& \subseteq\partial_LG(\bar{x})+K\partial_Td_S(\bar{x})\label{10010*}.
\end{align}
Now due to Lemma \ref{lem2}, the tangential convex property  together with the local Lipschitz property of $H$ at $\bar{x}$ implies that $\hat{\partial}H(\bar{x})=\partial_TH(\bar{x}).$  On the other hand, one can see immediately that all the assumptions of Corollary \ref{cor01} are satisfied and then according to (\ref{reg1}) it follows  that
\[
\partial_LG(\bar{x})=\co\underset{v_1\in V_1(\bar{x})}{\bigcup}\partial_T^xg(\bar{x},v_1).
\]
Now putting all above together in (\ref{10010*}), the inclusion (\ref{0511*}) is obtained and the proof is completed.
\qed
\end{proof}

The next result of this section provides necessary conditions for $RP_S$ under GEBCQ.
\begin{theorem}\label{GEBCQ2}
Let $\bar{x}\in S$ be a local solution of $RP_S.$ 
Furthermore, assume that GEBCQ holds at this point. Then one has
\begin{equation}\label{511}
\co\underset{v_2\in V_2(\bar{x})}{\bigcup}\partial_T^xh(\bar{x},v_2)\subseteq\co\underset{v_1\in V_1(\bar{x})}{\bigcup}\partial_T^xg(\bar{x},v_1)+
  \underset{j\in J(\bar{x})}{\sum}\sum_{i=1}^{n+1}
  \lambda_{ij}~\partial_T^xg_j(\bar{x},v_{ij}),
    \end{equation}
    where  $v_{ij}\in V_j(\bar{x}),~ \lambda_{ij}\geq0,~,\sum_{i=1}^{n+1}\lambda_{ij}\in[0,\sigma K].$ 
\end{theorem}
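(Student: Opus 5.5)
The plan is to combine the preceding theorem with Theorem \ref{th002}, in the same way that Theorem \ref{701} was obtained from Theorems \ref{621} and \ref{th002}.

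First, by the exact penalty theorem \cite[Proposition 6.3.2]{Borwein}, $\bar{x}$ is a local minimizer of $G+Kd_S-H$ on $\mathbb{R}^n$ for some sufficiently large $K>0$. Here we use assumption $B_2$: $G$ is locally Lipschitz, being a supremum of functions that are uniformly locally Lipschitz in $x$. As in the proof of the preceding theorem, \cite[Theorem 4.3(ii)]{boris01} then gives
\[
\hat{\partial}H(\bar{x})\subseteq\partial_LG(\bar{x})+K\partial_Ld_S(\bar{x}).
\]
Next we identify each term.

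\begin{itemize}
\item[(i)] By Lemma \ref{lem2}, $H$ is tangentially convex and Lipschitz near $\bar{x}$, so $\hat{\partial}H(\bar{x})=\partial_TH(\bar{x})$. By \cite[Theorem 2]{p2}, this set equals $\co\bigcup_{v_2\in V_2(\bar{x})}\partial_T^xh(\bar{x},v_2)$.
\item[(ii)] By Corollary \ref{cor01}, applied to $G$ with the $h$-part equal to zero, $\partial_LG(\bar{x})=\co\bigcup_{v_1\in V_1(\bar{x})}\partial_T^xg(\bar{x},v_1)$.
\end{itemize}

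The difference from the preceding theorem is that we keep the normal-cone term outside the convex hull, which is the more natural form anyway.

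Now use GEBCQ. Theorem \ref{th002} gives
\[
\partial_Ld_S(\bar{x})\subseteq\sum_{j\in J(\bar{x})}\co\Big(\bigcup_{\lambda_j\in[0,\sigma],\,v_j\in V_j(\bar{x})}\lambda_j\partial_T^xg_j(\bar{x},v_j)\Big).
\]
Multiplying by $K$ and applying Carath\'eodory's theorem in $\mathbb{R}^n$, each element of the $j$-th convex hull can be written as $\sum_{i=1}^{n+1}\mu_i\lambda_{ij}'\eta_{ij}$. Here $\eta_{ij}\in\partial_T^xg_j(\bar{x},v_{ij})$, $\mu_i\geq 0$ with $\sum_i\mu_i=1$, and $\lambda'_{ij}\in[0,\sigma]$.

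Set $\lambda_{ij}:=K\mu_i\lambda'_{ij}\geq0$. Then $\sum_{i=1}^{n+1}\lambda_{ij}\leq K\sigma$, so $\sum_i\lambda_{ij}\in[0,\sigma K]$. This yields exactly the sum appearing in (\ref{511}), and substituting all pieces into the penalty inclusion gives the claim.

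The main obstacle is step (i): the identification $\hat{\partial}H(\bar{x})=\partial_TH(\bar{x})$, together with the formula for $\partial_TH(\bar{x})$ in terms of $\partial_T^xh$. Assumption $B_1$ only gives upper semicontinuity of $h$, whereas \cite[Theorem 2]{p2} and Lemma \ref{lem2} require tangential convexity and local Lipschitzness of $H$. I would first verify these from $B_2$–$B_4$ via Theorem \ref{l1}, taking the $h$-part equal to zero there.

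A second, minor point is the applicability of \cite[Theorem 4.3(ii)]{boris01}. The difference rule $\hat{\partial}H\subseteq\partial_L(G+Kd_S)$ at a local minimizer of $(G+Kd_S)-H$ only requires $H$ to be finite near $\bar{x}$, and this holds by Lipschitzness. The sum rule $\partial_L(G+Kd_S)\subseteq\partial_LG+K\partial_Ld_S$ holds since both functions are Lipschitz.
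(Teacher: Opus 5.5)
\textbf{Verdict: there is a genuine gap.} Your route is exactly the paper's. Its proof of Theorem~\ref{GEBCQ2} cites (\ref{0511*}), which rests on the same penalization, difference rule, Lemma~\ref{lem2} and Corollary~\ref{cor01}, and then inserts Theorem~\ref{th002}. Keeping $K\partial_Ld_S(\bar x)$ outside the hull is harmless, and step (i), which you single out, is fine. The problem is that both limiting-subdifferential ingredients fail under the stated hypotheses.

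\emph{Step (ii).} Take $n=1$, a singleton $V_1$, and $g(x,v_1)=x^2\sin(1/x)$ with $g(0,v_1)=0$. Then $B_1$--$B_4$ hold, since $g$ is Lipschitz and differentiable everywhere, and $\co\bigcup_{v_1\in V_1(0)}\partial_T^xg(0,v_1)=\{0\}$. But $G$ is $C^1$ near $x_k=1/(k\pi)$ with $\hat\partial G(x_k)=\{(-1)^{k+1}\}$, so $\{-1,1\}\subseteq\partial_LG(0)$. The same function violates Lemma~\ref{00601}: the interchange of iterated limits in its proof presupposes a double limit that boundedness does not supply.

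\emph{Theorem~\ref{th002}.} Take one constraint with a singleton uncertainty set, $\psi_1(x)=x_2-x_1^2\sin(1/x_1)$ (equal to $x_2$ if $x_1=0$), and $\bar x=0$. Then $A_1$--$A_4$ hold, GEBCQ holds with $\sigma=1$ (move vertically onto the graph), and $\partial_T\psi_1(0)=\{(0,1)\}$. Yet $a_k=(1/(2k\pi),0)$ is a boundary point of $S$ near which $\psi_1$ is $C^1$ with $\nabla\psi_1(a_k)=(1,1)$. Hence $(1,1)/\sqrt2\in\hat\partial d_S(a_k)$, and so $(1,1)/\sqrt2\in\partial_Ld_S(0)\not\subseteq\{0\}\times[0,\sigma]$.

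So your chain only establishes $\partial_TH(\bar x)\subseteq\partial_LG(\bar x)+K\partial_Ld_S(\bar x)$. This is weaker than (\ref{511}) and cannot be sharpened to it with these two results. The paper's proof has the same gap.

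\textbf{How to repair it.} The statement itself is true, and you can prove it by staying at first order, where no limiting object appears.
\begin{enumerate}
\item[(a)] $G$, $H$ and $\psi_j$ are Lipschitz and directionally differentiable (Theorem~\ref{l1} with zero $h$-part). Hence local optimality gives $G'(\bar x;d)\geq H'(\bar x;d)$ for all $d\in T(\bar x;S)$.
\item[(b)] Fix $\eta\in\partial_TH(\bar x)$ and let $K:=L_G+L_H$, the local Lipschitz constants. Comparing an arbitrary $d$ with a nearest point of $T(\bar x;S)$ gives $\langle\eta,d\rangle\leq G'(\bar x;d)+K\,d_{T(\bar x;S)}(d)$ for every $d\in\mathbb{R}^n$.
\item[(c)] Divide the GEBCQ inequality at $x=\bar x+td$ by $t$, and use $\liminf_{t\downarrow0}d_S(\bar x+td)/t\geq d_{T(\bar x;S)}(d)$. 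This gives $d_{T(\bar x;S)}(d)\leq\sigma\sum_{j\in J(\bar x)}\big(\psi_j'(\bar x;d)\big)^+$.
\item[(d)] Hence $\eta$ is a subgradient at $0$ of the finite sublinear function $d\mapsto G'(\bar x;d)+\sigma K\sum_{j\in J(\bar x)}\big(\psi_j'(\bar x;d)\big)^+$. The convex sum and max rules then yield
\[
\eta\in\partial_TG(\bar x)+\sum_{j\in J(\bar x)}\ \bigcup_{\lambda_j\in[0,\sigma K]}\lambda_j\,\partial_T\psi_j(\bar x).
\]
\item[(e)] Apply \cite[Theorem 2]{p2} to $H$, $G$ and each $\psi_j$, then your Carath\'eodory step. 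This gives (\ref{511}), with the explicit constant $K=L_G+L_H$.
\end{enumerate}
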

\begin{proof}
It is easily seen that the  local optimality of $\bar{x}\in S$ in $RP_S$ implies (\ref{0511*}).
Then, using Theorem \ref{th002} and GEBCQ at $\bar{x}$ yield the existence of some positive scalers $\delta,\sigma$ such that inclusion (\ref{00813}) holds for $\partial_Ld_S(\bar{x}).$ Therefore, the proof is completed. 
\qed
\end{proof}
The last result of this section provides necessary optimality conditions for $RP_S$ under GACQ.
\begin{theorem}
Suppose that $\bar{x}\in S$ is a local minimizer of $RP_S.$ 
Moreover, assume that GACQ satisfies at this point. Then 
\begin{equation}\label{GACQ2}
\co\underset{v_2\in V_2(\bar{x})}{\bigcup}\partial_T^xh(\bar{x},v_2)\subseteq\co\underset{v_1\in V_1(\bar{x})}{\bigcup}\partial_T^xg(\bar{x},v_1)+
  \cl\Big(\underset{{\begin{array}{l}
 \scriptstyle{\quad v_{ij}\in V_j(\bar{x}) }\\
   \scriptstyle{\lambda_{ij}\geq0,\ j\in J(\bar{x})}\\
   \scriptstyle{\qquad l\in\mathbb{N}}\\
  \end{array}}}{\bigcup}
  \sum_{i=1}^l
  \lambda_{ij}\partial_T^xg_j(\bar{x},v_{ij})\Big).
  \end{equation}
\end{theorem}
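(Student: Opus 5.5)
We pass to directional derivatives, exactly as in the proof of Theorem \ref{ACQ}, and then apply the DTC inclusion argument at the origin.

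\emph{Step 1 (directional problem).} Since $\psi_0=G-H$ is locally Lipschitz by $(B_2)$ and tangentially convex, local optimality of $\bar{x}$ gives $G'(\bar{x};d)-H'(\bar{x};d)\geq 0$ for all $d\in T(\bar{x};S)$. By GACQ, $G'(\bar{x})\subseteq T(\bar{x};S)$. Hence $\bar d=0$ is a global minimizer of the DTC problem
\[
\min_{d}\ \tilde G(d)-\tilde H(d)\quad \mbox{s.t. } d\in G'(\bar{x}),
\]
where $\tilde G(d):=G'(\bar{x};d)$ and $\tilde H(d):=H'(\bar{x};d)$. Both are convex, finite and Lipschitz, and by Remark \ref{lltc} we have $\partial\tilde G(0)=\partial_TG(\bar{x})$ and $\partial\tilde H(0)=\partial_TH(\bar{x})$. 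The set $G'(\bar{x})$ is a closed convex cone, since each $\psi_j'(\bar{x};\cdot)$ is sublinear and continuous.

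\emph{Step 2 (convex inclusion).} $\tilde G+\delta_{G'(\bar{x})}$ is convex and $\tilde H$ is convex. The global minimality of $0$ gives, for every $\eta\in\partial\tilde H(0)$ and every $d$,
\[
(\tilde G+\delta_{G'(\bar{x})})(d)\geq \tilde H(d)\geq \langle\eta,d\rangle ,
\]
using $\tilde G(0)=\tilde H(0)=0$. Therefore $\eta\in\partial(\tilde G+\delta_{G'(\bar{x})})(0)$. This is the standard DC argument: $\partial h\subseteq\partial g$ at a global minimizer of $g-h$. Since $\tilde G$ is finite everywhere, the convex sum rule applies without further qualification and gives
\[
\partial_TH(\bar{x})\subseteq \partial_TG(\bar{x})+N(0;G'(\bar{x})).
\]

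\emph{Step 3 (identification).} By \cite[Theorem 2]{p2},
\[
\partial_TG(\bar{x})=\co\bigcup_{v_1\in V_1(\bar{x})}\partial_T^xg(\bar{x},v_1),\qquad \partial_TH(\bar{x})=\co\bigcup_{v_2\in V_2(\bar{x})}\partial_T^xh(\bar{x},v_2),
\]
which is justified by $(B_1)$–$(B_4)$, as in Theorem \ref{504}. The normal cone $N(0;G'(\bar{x}))=(G'(\bar{x}))^-$ was already computed in the proof of Theorem \ref{ACQ} as the closure of the cone generated by $\bigcup_{j\in J(\bar{x})}\partial_T\psi_j(\bar{x})$. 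This closure equals the set appearing in (\ref{GACQ2}), via the bipolar theorem and again \cite[Theorem 2]{p2} applied to each $\psi_j$. Substituting these identities yields (\ref{GACQ2}).

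The main obstacle is Step 1. Here the tangential convexity and Lipschitz property of $G$ and $H$ are needed so that $\psi_0'(\bar{x};\cdot)=G'(\bar{x};\cdot)-H'(\bar{x};\cdot)$ is a genuine limit, which follows from the Danskin-type result \cite[Theorem 1]{p2}. The reduction from $T(\bar{x};S)$ to $G'(\bar{x})$ works only through GACQ. The closure in the final set cannot be dropped, since the generated cone need not be closed (cf. Example \ref{EX1}).
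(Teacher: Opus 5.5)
Your proposal is correct and follows the paper's proof. Both pass to the directional DTC problem at $d=0$, obtain $\partial_TH(\bar{x})\subseteq\partial_TG(\bar{x})+(G'(\bar{x}))^-$, and identify both sides via \cite[Theorem 2]{p2} and the polar computation from the proof of Theorem \ref{ACQ}.

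The only difference is the middle step. You use GACQ up front to restrict to the closed convex cone $G'(\bar{x})$, then argue directly with convex subdifferentials and the Moreau--Rockafellar sum rule. The paper instead invokes \cite[Theorem 4.3(ii)]{boris01} over $T(\bar{x};S)$ and asserts the equivalence with the convex inclusion, tacitly using $T(\bar{x};S)=G'(\bar{x})$. Your version is more self-contained and needs only the inclusion $G'(\bar{x})\subseteq T(\bar{x};S)$.
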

\begin{proof}
By using the similar arguments as in Theorems \ref{ACQ} and          \ref{GEBCQ2}, it is clear that the local optimality of $\bar{x}\in S$ yields $\psi'_0(\bar{x};d)\geq0$ for all $d\in T(\bar{x};S).$ Therefore $\bar{d}=0$ is a global minimizer of the following problem:
\begin{align}
&\min ~~~\tilde{\psi_0}(d)=\tilde{G}(d)-\tilde{H}(d) \label{0001}\\
&~\mbox{s.t.}~~~~d\in T(\bar{x};S), \nonumber
\end{align}
where $\tilde{G}(d):=G'(\bar{x};d)$ and $\tilde{H}(d):=H'(\bar{x};d).$
Now due to \cite[Theorem 4.3(ii)]{boris01},  it follows that
\begin{equation*}\label{10010}
\hat{\partial}\tilde{H}(0)\subseteq\partial_L\tilde{G}(0)+N_L(0;T(\bar{x};S)).
\end{equation*}
Obviously, the above is equivalent to 
\begin{equation}\label{10011}
\partial\tilde{H}(0)\subseteq\partial\tilde{G}(0)+(G'(\bar{x}))^-.
\end{equation}
Finally, by the similar arguments as in Theorem \ref{621} and when the constructed functions are substituted in (\ref{10011}), inclusion (\ref{GACQ2}) is obtained which completes the proof.
\qed
\end{proof}
we conclude this section with the following illustrative example. 
\begin{example}\label{EX1}
  Consider the following robust optimization problem:
  \begin{eqnarray*}
    \begin{array}{lr}
  \min\hspace{1cm}  \psi_0(x):=G(x)-H(x) \\
  \ \mbox{s.t.}\hspace{1.2cm}x\in S,
   \end{array}
  \end{eqnarray*}
  where $S$ is the feasible set defined in Example \ref{exa003},
  and 
  \[
  G(x):=\underset{(v_{11},v_{12})\in V_1}{\sup}~2|v_{11}v_{12}|x_2^2,~~~~~H(x):=\underset{(v_{21},v_{22})\in V_2}{\sup}~\cos(v_{21}v_{22})|x_1|,
  \]
$V_1:=\big\{v_1=(v_{11},v_{12})\in\mathbb{R}^2\ |\  v_{11}^2+v_{12}^2\leq1,\ v_{11}v_{12}\geq0\big\},~V_2:=\big\{v_2=(v_{21},v_{22})\in\mathbb{R}^2\ |\ v_{21},v_{22}\in[0,\frac{\pi}{2}]\big\}.$ 
A simple computation gives us
 \begin{eqnarray*}
    \begin{array}{lr}
  G(x):=x_2^2,
 ~ V_1(x):=\big\{(v_{11},v_{12})\in V_1 ~|~ v_{11}^2+v_{12}^2=1,~ v_{11}=v_{12}=\pm\frac{1}{\sqrt{2}}\big\}, \\
  H(x):=|x_1|,~ V_2(x):=\big\{(v_{21},v_{22})\in V_2 ~|~ v_{21}v_{22}=0\big\}.
   \end{array}
  \end{eqnarray*}
Hence $\psi_0(x)=x_2^2-|x_1|.$ It is easily seen that $\bar{x}=(0,0)\in S$ is a local minimizer of the problem and assumptions $B_1-B_4$ are satisfied for the functions. Furthermore, $\partial_LG(\bar{x})=\partial_TG(\bar{x})=\{(0,0)\},$ and $\hat{\partial}H(\bar{x})=\partial_TH(\bar{x})=[-1,1]\times\{0\}.$
 On the other hand, according to Example \ref{exa003}, GACQ satisfies at $\bar{x},$
  and $ \partial_T\psi(\bar{x})=\mathbb{B}+(0,-1).$
  Hence
  \[\bigcup_{\lambda\geq0}\lambda\partial_T\psi(\bar{x})=\big\{x\ |\ x_2<0\big\}.\]
  This especially  yields
  \[
  [-1,1]\times\{0\}\subseteq\{(0,0)\}+\cl\bigcup_{\lambda\geq0}\lambda\partial_T\psi(\bar{x})
  \]
  which shows that (\ref{GACQ2}) holds true. But 
 \[
 [-1,1]\times\{0\}\notin\{(0,0)\}+\bigcup_{\lambda\geq0}\lambda\partial_T{\psi}(\bar{x}),\]
  because GEBCQ is not satisfied at $\bar{x}.$
Thus  (\ref{511}) is not satisfied and the closure in inclusion (\ref{GACQ2}) cannot be omitted.
\end{example}
\section{Conclusion}
This paper introduces novel optimality conditions for nonsmooth optimization problems incorporating uncertain data, where the objective function is formulated as a difference of two tangentially convex (DTC) functions. 
Initially, we employ nonsmooth calculus techniques, specifically tailored to our maximum function, to elucidate the relationships between Fr\'{e}chet, limiting, and tangential subdifferentials of the constituent functions.
Subsequently, leveraging generalized constraint qualifications,
such as GEBCQ and GACQ, and building upon the concept of tangential subdifferential, we derive optimality conditions for
 two problem types. These encompass scenarios where the objective function is defined as (i) the maximum of DTC functions and (ii) a difference of suprema of tangentially convex functions.
Significantly, unlike many related studies, our results are established without imposing convexity assumptions on the uncertain sets or concavity/convexity requirements on the involved functions with respect to the uncertain parameters. Consequently, the results presented herein hold true for a broad class of problems, potentially encompassing those considered in differentiable or even DC forms.
\begin{acknowledgements}
The authors thank to anonymous referees for their valuable comments and suggestions which improve the paper. Further, the authors would like to thank the Editor for the help in processing of the article.
\end{acknowledgements}
\section*{Declarations}
\hspace{-.3cm}
\textbf{ Funding.} The first-named author was supported by Iran National Science Foundation (INSF, No. 4001956) and by a grant from IPM (No. 1404490039).\\
\textbf{Conflict of interest.} 
The authors have no relevant financial or non-financial interest to disclose.\\
\textbf{Ethical approval.} This manuscript does not contain any studies with human participants or animals performed by any of the authors.\\
\textbf{Data availability. } This manuscript has no associated data.



\end{document}